\tikzset{baseline=(current bounding box.center)}
\tikzset{font=\scriptsize}
\tikzstyle{none}=[inner sep=0mm]
\tikzstyle{1-cell}=[draw=black, line width=1]
\tikzstyle{over-braiding}=[double, draw=white,line width=1.5,double=black,double distance=1]
\tikzstyle{2-cell}=[circle,draw=black, line width=1, fill=white, inner sep=0.5mm]
\DeclareMathAlphabet{\mathpzc}{OT1}{pzc}{m}{it}
\definecolor{verde}{rgb}{0.,0.7,0.}
\definecolor{indigo}{rgb}{.18, .34, .78}
\definecolor{indigo1}{rgb}{.18, .24, .78}
\definecolor{indigo2}{rgb}{.18, .14, .78}
\definecolor{indigo3}{rgb}{.18, 0., .78}
\definecolor{rojo}{rgb}{1,0,0}
\definecolor{negro}{rgb}{0,0,0}
\definecolor{lila}{rgb}{.46, .16, .78}
\definecolor{lila1}{rgb}{.46, .16, .86}
\definecolor{lila2}{rgb}{.56, .16, .86}
	\definecolor{lila3}{rgb}{.63, .16, .78}
\definecolor{lila4}{rgb}{.7, .16, .78}
\definecolor{lila5}{rgb}{.78, .26, .78}
\definecolor{lila6}{rgb}{.6, 0., .78}
\theoremstyle{definition}
\newtheorem{thm}{Theorem}[section]
\newtheorem{lma}[thm]{Lemma}
\newtheorem{cor}[thm]{Corollary}
\newtheorem{defn}[thm]{Definition}
\newtheorem{rem}[thm]{Remark}
\newtheorem{prop}[thm]{Proposition}
\newtheorem{ex}[thm]{Example}
\newcommand{\qed}{\hfill\quad\fbox{\rule[0mm]{0,0cm}{0,0mm}}  \par\bigskip}
\newcommand{\x}{\mbox{-}}
\newcommand{\w}{\hspace{-0,06cm}}
\newcommand{\s}{\hspace{0,06cm}}
\newcommand{\Ob}{{\mathcal Ob}}
\newcommand{\bEM}{{\rm bEM}}
\newcommand{\EM}{{\rm EM}}
\newcommand{\Mnd}{{\rm Mnd}}
\newcommand{\Comnd}{{\rm Comnd}}
\newcommand{\Bimnd}{{\rm Bimnd}}
\newcommand{\Bilax}{{\rm Bilax}}
\newcommand{\Ps}{{\rm Ps}}
\newcommand{\Bicat}{{\rm Bicat}}
\newcommand{\Pseudo}{{\rm Pseudo}}
\newcommand{\TF}{{\rm TF}}
\newcommand{\comp}{\circ}
\newcommand{\iso}{\cong}
\newcommand{\ot}{\otimes}
\newcommand{\C}{{\mathcal C}}
\newcommand{\Tau}{{\mathcal T}}
\newcommand{\M}{{\mathcal M}}
\newcommand{\Pp}{{\mathcal P}}
\newcommand{\D}{{\mathcal D}}
\newcommand{\F}{{\mathcal F}}
\newcommand{\G}{{\mathcal G}}
\newcommand{\HH}{{\mathcal H}}
\newcommand{\N}{{\mathcal N}}
\newcommand{\A}{{\mathcal A}}
\newcommand{\B}{{\mathcal B}}
\newcommand{\E}{{\mathcal E}}
\newcommand{\T}{{\mathcal T}}
\newcommand{\YD}{{\mathcal YD}}
\newcommand{\Id}{\operatorname {Id}}
\newcommand{\id}{\operatorname {id}}
\newcommand{\Epsilon}{\varepsilon}
\def\Z{{\mathcal Z}}  
\def\K{{\mathcal K}}  
\newcommand{\Mod}{\operatorname{Mod}}
\newcommand{\Bimod}{\operatorname{Bimod}}
\newcommand{\Lax}{\operatorname{Lax}}
\newcommand{\Dist}{\operatorname{Dist}}
\newcommand{\cref}[1]{C.~\ref{c:#1}}
\newcommand{\exlabel}[1]{\label{ex:#1}}
\newcommand{\exref}[1]{Example~\ref{ex:#1}}
\newcommand{\lelabel}[1]{\label{le:#1}}
\newcommand{\leref}[1]{Lemma~\ref{le:#1}}
\newcommand{\eqlabel}[1]{\label{eq:#1}}
\newcommand{\equref}[1]{(\ref{eq:#1})}
\newcommand{\delabel}[1]{\label{de:#1}}
\newcommand{\deref}[1]{Definition~\ref{de:#1}}
\newcommand{\prlabel}[1]{\label{pr:#1}}
\newcommand{\prref}[1]{Proposition~\ref{pr:#1}}
\newcommand{\colabel}[1]{\label{co:#1}}
\newcommand{\rmlabel}[1]{\label{rm:#1}}
\newcommand{\rmref}[1]{Remark~\ref{rm:#1}}
\newcommand{\selabel}[1]{\label{se:#1}}
\newcommand{\seref}[1]{Section~\ref{se:#1}}
\newcommand{\sslabel}[1]{\label{ss:#1}}
\newcommand{\ssref}[1]{Subsection~\ref{ss:#1}}
\newcommand{\fit}[3]{\ar@{:=}@/{#3}/[#1] |{\Downarrow #2} }
\definecolor{pastel-red}{RGB}{175,53,30}
\title{Categorical centers and Yetter--Drinfel`d-modules as 2-categorical (bi)lax structures} 
\date{}
\author{%
Bojana Femi\'c\thanks{Corresponding author} \vspace{6pt} \\
{\small Mathematical Institute of  \vspace{-2pt}}\\ 
{\small Serbian Academy of Sciences and Arts } \vspace{-2pt}\\
{\small Kneza Mihaila 36,} \vspace{-2pt}\\
{\small 11 000 Belgrade, Serbia}\\
{\small femicenelsur@gmail.com} 
\and
Sebastian Halbig\vspace{6pt} \\
{\small Research Group Algebraic Lie Theory}\vspace{-2pt}\\
{\small Philipps-Universit\"at Marburg} \vspace{-2pt}\\
{\small  Biegenstra\ss e 10, } \vspace{-2pt}\\
{\small  35037 Marburg, Germany}\\
{\small Sebastian.Halbig@uni-marburg.de}
}
\begin{document}

\maketitle

\begin{abstract}
The bicategorical point of view provides a natural setting for many concepts in the representation theory of monoidal categories.  
We show that centers of twisted bimodule categories correspond to categories of 2-dimensional natural transformations and modifications between the deloopings of the twisting functors. We also show that dualities lift to centers of twisted bimodule categories. 
Inspired by the notion of (pre)bimonoidal functors due to McCurdy and Street and by bilax functors of Aguiar and Mahajan,  
we study 2-dimensional functors which are simultaneously lax and colax with a compatibility condition. Our approach uses a 
sort of 2-categorical Yang-Baxter operators, but the idea could equally be carried out using a kind of 2-categorical braidings. 
We show how this concept, which we call bilax functors, generalize many known notions from the theory of Hopf algebras. 
We propose a 2-category of bilax functors whose 1-cells generalize the notions of Yetter-Drinfel`d modules in ordinary categories, 
and a type of bimonads and mixed distributive laws in 2-categories. We show that the 2-category of bilax functors from the trivial 
2-category is isomorphic to the 2-category of bimonads, and that there is a faithful 2-functor from the latter to the 2-category 
of mixed distributive laws of Power and Watanabe. 
\end{abstract}

{\small {\em Keywords:} center categories, bicategories, Yang-Baxter operators, bimonads, bimonoidal functors.}

{\small {\em 2020 MSC:} 18N10, 18D25, 18M15.

\section{Introduction}\selabel{intro}

The concept of centers of a monoids was categorified independently by Drinfel`d, Majid and Street in the 1990's.
Since then it has been extensively studied in Hopf algebra and category theory, see for example \cite{Kassel} for an overview.
One of its striking features comes from the fact that by passing from sets to categories one can replace the qualitative question: `Do two elements commute with another?' with a quantitative one: `How many suitably coherent (iso-)morphisms
exist between the tensor product of two objects and its opposite?'. 
Such (iso-)morphisms are called \emph{half-braidings}.
The center of a monoidal category consists of objects of the underlying category with fixed half-braidings together with morphisms of the base category which satisfy a certain compatibility relation.

The aim of the present paper is twofold. In the first part, we study the center construction from the bicategorical perspective.
Our main motivation comes from the observation that monoidal categories can be identified with bicategories with a single object.
This procedure, sometimes called delooping, establishes an equivalence between monoidal categories with monoidal functors and bicategories with a single object together with pseudofunctors.
Following this line of thinking, by simple means we reveal a surprising and beautiful fact that colax natural transformations between lax 
functors among bicategories with single objects are nothing but the objects of the twisted Drinfel`d center of the corresponding codomain monoidal category. Accordingly, modifications of such colax transformations correspond to the morphisms in the Drinfel`d center, so that one has an isomorphism of categories. In particular, we obtain:

\begin{thm} 
Let $\C$ be a monoidal category and write $Del(\C)$ for its delooping. 
There exists a monoidal equivalence of categories between the Drinfel`d center of $\C$ and the category of pseudonatural 
transformations and their modifications on the identitity functor of $Del(\C)$.
\end{thm}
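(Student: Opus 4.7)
The plan is to unfold both sides of the claimed equivalence along the delooping dictionary. Since $Del(\C)$ has a unique object $\ast$, a pseudonatural transformation $\alpha\colon \Id_{Del(\C)} \Rightarrow \Id_{Del(\C)}$ consists of a single 1-cell $\alpha_{\ast}\colon \ast\to\ast$ together with, for every 1-cell $f\colon \ast\to\ast$, an invertible 2-cell $\alpha_f\colon f\,\alpha_{\ast} \Rightarrow \alpha_{\ast}\, f$, subject to naturality in $f$, compatibility with the unit 1-cell, and compatibility with horizontal composition of 1-cells. Translating through the delooping equivalence, $\alpha_{\ast}$ becomes an object $X$ of $\C$, and the family $\{\alpha_Y\}_{Y\in \C}$ becomes a collection of isomorphisms $\sigma_Y\colon Y\otimes X \to X\otimes Y$, natural in $Y$. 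The three coherence axioms translate, respectively, into the naturality of $\sigma$, the normalisation $\sigma_{\mathbf 1}=\id_X$, and the hexagon $\sigma_{Y\otimes Z}=(\sigma_Y\otimes Z)\circ(Y\otimes \sigma_Z)$ up to the relevant associators, which together constitute precisely the defining data of an object $(X,\sigma)$ of the Drinfel`d center $Z(\C)$.

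Next I would treat the morphism level. A modification $m\colon \alpha \Rrightarrow \beta$ is a single 2-cell $m_{\ast}\colon \alpha_{\ast}\Rightarrow \beta_{\ast}$ compatible with the $\alpha_f$ and $\beta_f$; under the translation, this is a morphism $X\to X'$ in $\C$ intertwining the half-braidings, i.e.\ a morphism in $Z(\C)$. Assembling these assignments into functors in both directions gives a strict inverse pair, hence an isomorphism of the underlying categories $Z(\C) \cong \Nat(\Id_{Del(\C)}, \Id_{Del(\C)})$. Note that this step is essentially a direct consequence of the observation, highlighted earlier in the excerpt, that colax natural transformations between lax endofunctors of a one-object bicategory recover the twisted Drinfel`d center; here we restrict to pseudonatural transformations on the identity to land in the ordinary (untwisted) center.

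It then remains to match the monoidal structures. On the $\Nat$-side the tensor product is given by horizontal composition (equivalently, by pasting one pseudonatural endotransformation of the identity with another), with unit the identity pseudonatural transformation. Evaluating at $\ast$, horizontal composition of $\alpha$ and $\beta$ produces the object $X\otimes Y$, and the induced family of 2-cells is the pasting of $\alpha_{(-)}$ and $\beta_{(-)}$, which one checks reproduces the standard formula for the half-braiding on a tensor product in $Z(\C)$. Because the functor built above is already the identity on underlying data, the structure isomorphisms correspond precisely to the associator and unitor constraints of $\C$, so the equivalence promotes to a strong monoidal one.

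The routine bookkeeping of the objects/morphisms correspondence aside, the main obstacle I expect is the careful matching of conventions: the direction and whiskering order in the pseudonaturality 2-cell $\alpha_{f\ast g}$ must be reconciled with whatever convention is taken for the half-braiding hexagon, and the associators of $\C$ (which are hidden inside the composition in $Del(\C)$) must be threaded through the coherence diagrams so that the axioms line up on the nose. Once the conventions are fixed, the proof reduces to recognising the center axioms inside the pseudonaturality and modification diagrams.
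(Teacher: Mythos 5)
Your proposal is correct and follows essentially the same route as the paper: Propositions~3.2 and~3.5 identify objects and morphisms of the (strong) twisted center with pseudonatural transformations and their modifications via the delooping dictionary (specialised to $F=G=\Id_\C$), and Proposition~3.8 supplies the monoidal structure as horizontal composition of endotransformations of the identity. The only caveat is the orientation convention --- the paper works with the reversed tensor product (Remark~2.1) so that left half-braidings $M\ot X\to X\ot M$ match the colax/pseudo naturality squares --- which you already flag as bookkeeping.
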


In fact, we formulate a general (weak) center category $\Z^w(F,\M,G)$ for a $\D\x\C$-bimodule category $\M$ and two lax monoidal functors 
$F:\E\to\D$ and $G:\E\to\C$ from a third monoidal category $\E$. In the case that $\C=\D=\M$, we interpret $\Z^w(F,\D,G)$ from a 
bicategorical point of view. As this interpretation relies on delooping, in paricular on the fact that the monoidal product of $\C$ becomes the composition of 1-cells, this approach does not allow for giving a bicategorical interpretation of $\Z^w(F,\M,G)$ for general $F$ and $G$. 
This task will be treated elsewhere. (The weakness 
corresponds to dealing with non-invertible half-braidings, while with strongness we allude to invertible ones. Accordingly, we 
differentiate left and right weak centers. )

We prove that left weak center categories form a bicategory $\Z^w_l(\E,\K)$ which is isomorphic to the bicategory of suitable lax 
functors of bicategories. 
When $\K$ is an autonomous 2-category, meaning that all its 1-cells have left and right adjoints, and $\E$ is 
an autonomous monoidal category, then on pseudofunctors $Del(\E)\to\K$ the weak and strong center categories coincide (\prref{w=s}). 
Furthermore, under these conditions the corresponding bicategory $\Z^{w\x ps}_l(\E,\K)$ is autonomous (\prref{auton}). 
This result provides a natural interpretation of duality notions between centers of compatible bimodule categories in \cite{HZ}.
Moreover, our bicategorical interpretation of center categories $\Z^w_l(F,\M,G)$ which make up the bicategory $\Z^w_l(\E,\K)$ 
encompasses also the Shimizu's bicategory $\TF(\C,\D)$ of tensor functors from \cite{Shim} and the result thereof about duals. 

The above bicategory $\Z^w_l(\E,\K)$ is a particular case of the bicategory $\Lax_{clx}(\B,\B')$ of lax functors $\B\to\B'$ 
among bicategories, colax natural transformations and modifications. Taking the pseudo-pseudo version of the latter, 
we get the bicategory $\Ps_{ps}(\B,\B')$ whose hom-categories are $\Ps(\F,\G)$ for pseudofunctors $\F,\G:\B\to\B'$. 
When $\F=\G=\Id_\B$ one recovers the center category $\Z(\B)$ of the bicategory $\B$ introduced in 
\cite{Ehud}. Our above bicategory of center categories alludes to the possibility to consider ``twisted center categories of the bicategory $\B'$''. 

\smallskip

On the other hand, in the second part of the paper, 
we introduce and study 2-categorical functors which are simultaneously lax and colax with a compatibility relation involving a 
Yang--Baxter operator. 
We call them {\em bilax functors} and differentiate {\em bilax functors with compatible Yang-Baxter operator}. 
For monoidal categories such functors were studied under the name of pre-bimonoidal and bimonoidal functors in \cite{CS}, 
and (when the domain category is braided) bilax functors in \cite{Agui}. 
We show that our bilax functors generalize a variety of notions and possess certain preservation properties:  
bialgebras in braided monoidal categories, {\em bimonads} 
in 2-categories (with respect to Yang-Baxter operators, YBO's), and preserve bimonads (w.r.t. YBO's), 
bimonads in 2-categories with respect to distributive laws from \cite{F1}, {\em module comonads} and 
{\em comodule monads}, and {\em relative bimonad modules}. Moreover, the component functors of a bilax functor on hom-categories 
factor through the category of {\em Hopf bimodules} (w.r.t. YBO's). The 2-categorical notions in italic letters are introduced in this paper 
and they generalize to 2-categories the same named notions in braided monoidal categories. 

We record that instead of working with Yang-Baxter operators, one could equally use local braidings, following the footsteps of \cite{Agui}. 
In this case the generalization and preservation results somewhat differ from the ones that we obtained and that are listed above.

We establish a 2-category of bilax functors $\Bilax(\K,\K')$ by introducing bilax natural transformations and bilax modifications. 
Accordingly, $\Bilax_c(\K,\K')$ denotes the 2-category of bilax functors with compatible Yang-Baxter operator. 
Bilax natural transformations are both lax and colax natural transformations satisfying a compatibility condition. 
As such they 
generalize bimonad morphisms from \cite{F1} and Yetter-Drinfel`d modules from braided monoidal categories. 
In the classical case, the category of Yetter-Drinfel`d modules over a bialgebra $B$ is monoidally equivalent to the Drinfel`d center of 
the category ${}_B\M$ of modules over the same bialgebra. The half-braidings in the left Drinfel`d center can be seen as colax natural transformations. 
In the category ${}_B\M$ 
one can construct a lax natural transformation which together with the colax one makes a bilax natural transformation. 
(We explain this in more detail at the end of \ssref{b.nat-tr}.)  
This illustrates why in a general 2-category $\K$ 
bilax natural transformations (and bilax modifications) generalize the category of Yetter-Drinfel`d modules, but not the (left) center category. 
Finally, we show that there is a 2-category isomorphism $\Bilax_c(1,\K)\iso\Bimnd(\K)$ and a faithful 2-functor 
$\Bimnd(\K)\hookrightarrow\Dist(\K)$. Here $\Bimnd(\K)$ is the 2-category of bimonads from \cite{F2} and $\Dist(\K)$ is the 2-category 
of mixed distributive laws of \cite{PW}. 

\medskip

The paper is composed as follows. We first give an overview of bicategories, deloopings, module and center categories. In 
section 3 we give a higher categorical interpretation of center categories and study when the bicategory of center categories 
is autonomous. Bilax functors and their properties are studied in section 4, while in the last section a 2-category of bilax 
functors is introduced and its relations to the 2-categories $\Bimnd(\K)$ and $\Dist(\K)$ is shown.

\section{Preliminaries: Deloopings and weak twisted centers} \selabel{prelim}

We assume that the reader is familiar with the notion of a braided monoidal category and the corresponding 
notation of string diagrams (see {\em e.g.} \cite{JS, Kassel, Tak}), as well as with the definition of a bicategory, 
for which we recommend \cite{Ben,JY}. 

In this section we give a short summary of bicategories, delooping bicategories, module categories and weak twisted centers. 
For a more extensive discussion of module categories we refer the reader to \cite{EGNO}. \\

Briefly, a \emph{monoidal category} consists of a category $\C$ together with a suitably associative and unital multiplication $\otimes \colon \C \times \C \to \C$ implemented by a functor which is called the \emph{tensor product}.

A `many object' generalization of monoidal categories is provided by \emph{bicategories}. These can be thought of as higher dimensional categories with \emph{hom-categories} between every pair of objects instead of mere sets. The objects of these hom-categories are called 
\emph{1-cells} and the morphisms \emph{2-cells}.
Any bicategory $\K$ admits two ways to compose: \emph{horizontal composition} given by the composition functors 
\begin{equation*}
\circ_{Z,Y,X}\colon \K(Y,Z)\times \K(X,Y) \to \K(X, Z), \qquad\qquad \text{ for } X,Y,Z\in \Ob\K \text{ (objects of $\K$)},
\end{equation*}
and \emph{vertical composition} induced by the compositions inside the hom-categories. 
Instead of identity morphisms, every $X\in\Ob\K$ has a unit 1-cell $\id_{X}\in \K(X,X)$. 
In general, the horizontal composition of a bicategory is associative and unital only up to suitable natural isomorphisms.
Bicategories where these morphisms are identities are called \emph{2-categories}.
Since every bicategory is biequivalent to a  2-category, we will restrict ourselves without loss of generality 
to the setting of 2-categories. 

As hinted at before, there is an intimate relationship between monoidal categories and bicategories. It is provided by considering a monoidal category $\C$ as a bicategory $Del(\C)$ with one object (which we will usually denote by $*$) and $\C$ as its unique hom-category. Under this identification, the tensor product of $\C$ becomes the horizontal composition of $Del(\C)$, and the monoidal unit
becomes the identity 1-cell $id$ on the unique object of $Del(\C)$.
The resulting canoncial isomorphism of categories between the category of monoidal categories with certain structure preserving functors and one-object bicategories plus structure preserving 2-dimensional functors is called \emph{delooping}:
\begin{equation} \eqlabel{eq:delooping}
  Del \colon \left\{\begin{gathered}
       \text{monoidal categories with} \\ \text{lax/colax/strong monoidal functors}
     \end{gathered}\right\} \to
  \left\{
    \begin{gathered}
      \text{one-object bicategories with} \\
      \text{lax/colax/pseudofunctors}.
    \end{gathered}\right\}
\end{equation}


\begin{rem} \rmlabel{reversed}
Observe that we consider the horizontal composition in bicategories in the counter lexicographical order, whereas 
the tensor product in a monoidal category $\C$ is read from left to right, that is: $\C\times\C\ni(X,Y)\stackrel{\ot}{\mapsto} X\ot Y\in\C$. 
For objects $X,Y\in\C$ corresponding to 1-cells $x,y$ in $Del(\C)$ respectively, this implies that the tensor product $X\ot Y$ corresponds to the composition of 1-cells $y\circ x$. In order to avoid applying this mirror symmetry, we are going to consider in the isomorphism $Del$ 
in \equref{eq:delooping} that the reversed tensor product becomes the horizontal composition in bicategories (formally, this is precomposing 
$Del$ with the isomorphism functor defined on objects by sending a monoidal category $(\C, \ot)$ to it reversed category$(\C^{rev}, \ot^{rev})$ ). 
\end{rem}

Bicategories provide a natural interpretation of the representation theory of monoidal categories. 
All endomorphism categories of a bicategory are monoidal with horizontal composition as a tensor product.
Similarly, given two objects $A, B \in \Ob\K$ of a bicategory with endomorphism categories $\D:=\K(A,A)$ and $\C:=\K(B,B)$, 
horizontal composition endows $\M \coloneqq \K(A,B)$ with the structure of a $(\C, \D)$-\emph{bimodule category}.
That is, there are two functors $\rhd\colon \C \times \M \to \M$ and $\lhd \colon \M \times \D \to \D$ subject to analogous but weakened version of the axioms of bimodules over a monoid. 

Conversely: to any $(\C, \D)$-bimodule category $\M$ we can associate a two object bicategory $Del(\M)$, which we call the
\emph{delooping} of $\M$. It has two objects $0$ and $1$ and hom-categories $Del(\M)(0,0) = \D, Del(\M)(0,1)=M, Del(\M)(1,1) =\C$ and 
$Del(\M)(1,0) =1$, the trivial category. Horizontal composition is given by 
the tensor products of $\C$ and $\D$ and the left and right action of $\C$ and $\D$ on $\M$. The relation between (bi)module categories and bicategories 
was already observed by Benabo\'u, \cite[Section 2.3]{Ben}.

If the categories $\C$ and $\D$ coincide, one can define the center of a bimodule category.
The aim of the paper at hand will be the study of these centers and their interaction with the theory of bicategories in a slightly more general version.

\begin{defn} \delabel{relative center cat}
  Let $F \colon \E \to \D$ and $G \colon \E \to \C$ be lax monoidal functors and $\M$ a (strict) $(\C, \D)$-bimodule category over the (strict) monoidal categories $\C$ and $\D$.
  A \emph{left half-braiding} of an object $M\in \M$  relative to $F$ and $G$ is a natural transformation
  \begin{equation*}
   \sigma_{X} \colon M \lhd F (X) \to G (X) \rhd M, \qquad \qquad \text{for all } X \in \E,
 \end{equation*}
 such that for all $X,Y \in \mathcal{E}$ the following diagrams commute:
 \begin{equation} \eqlabel{center diag1} 
 \begin{tikzcd}
	{M \lhd F(Y) \lhd F(X)} && {G(Y) \rhd M \lhd F(X)} \\
	{M  \lhd F(Y \otimes X) } & {G(Y \otimes X) \rhd M } & {G(Y) \rhd G(X) \rhd M }
	\arrow["{\sigma_Y \lhd \id_{F(X)}}", from=1-1, to=1-3]
	\arrow["{\id_{G(Y)}\lhd \sigma_X}", from=1-3, to=2-3]
	\arrow["{G^2 \rhd M}", from=2-3, to=2-2]
	\arrow["{M \lhd F^2}"', from=1-1, to=2-1]
	\arrow["{\sigma_{Y\otimes X}}"', from=2-1, to=2-2]
\end{tikzcd}
\end{equation}
\begin{equation} \eqlabel{center diag2}  
\begin{tikzcd}
	{ M \lhd I \cong I \rhd M} && {G(I) \rhd M} \\
	& {M\lhd F(I)}
	\arrow["{G^0 \rhd M}", from=1-1, to=1-3]
	\arrow["{M \lhd F^0}"', from=1-1, to=2-2]
	\arrow["{\sigma_I}"', from=2-2, to=1-3]
\end{tikzcd}
\end{equation}
Similarly, a \emph{right half-braiding} on $M$ relative to $F$ and $G$ is a natural transformation
\begin{equation*}
  \tilde\sigma_{X} \colon G (X) \rhd M \to M \lhd F (X), \qquad \qquad \text{for all } X \in \E,
\end{equation*}
subject to analogous identities.
\end{defn}

The \emph{left weak center of $\M$ relative to $F$ and $G$} is the category $\Z_l^w(F, \M, G)$.
Its objects are pairs $(M,\sigma)$ consisting of an object $M \in\M$ together with a left half-braiding $\sigma$ on $M$ relative to $F$ and $G$.
A morphism between objects $(M, \sigma), (N, \tau) \in \Z_{l}^{w}(F, \M, G)$ is an arrow $f \in \M(M,N)$ such that
\begin{equation} \label{eq:morph-of-center}
  (\id_{G(X)}\rhd f)\sigma_{X} = \tau_{X}(f \lhd \id_{F(X)}), \qquad \qquad \text{for all } X \in \E.
\end{equation}

The full subcategory $\Z^{s}_{l}(F, \M, G)$ of $\Z_{l}^{w}(F, \M, G)$ whose objects have invertible half-braidings is called the \emph{(strong) left center of $\M$ relative to $F$ and $G$}. When the functors are clear from the context, we will call the latter two categories simply 
{\em left weak/strong twisted centers of $\M$}, respectively.  

We define the right weak and strong twisted center categories $\Z_{r}^{w}(F, \M, G)$ and $\Z^{s}_{r}(F, \M, G)$ in an analogous way. 

When $\C=\D=\M$ we set $Z_{l}^{w}(F, G):=\Z_{l}^{w}(F, \D, G)$ and $Z_{r}^{w}(F, G):=\Z_{r}^{w}(F, \D, G)$. 
For $\C$ a tensor category and $F,G$ tensor functors these present the (left and right version of) twisted center category $Z(F,G)$ 
studied in \cite[Section 3]{Shim}.

In case $F=G=\Id_\C$, we write $\Z^l_\C(\M)\coloneqq \Z^s_l(\Id,\M,\Id)$ and $\Z^r_\C(\M)\coloneqq\Z^s_r(\Id,\M,\Id)$. 
These recover the (left and right) center category from \cite{GNN}. If moreover $\M=\C$, the categories $\Z^l_\C(\C)$ and $\Z^r_\C(\C)$ 
recover the left and right Drinfel`d center categories of $\C$.

\begin{lma} \lelabel{left-iso-right}
  Suppose $G\colon \E \to \C$ and $F\colon \E \to \D$ are lax monoidal functors and $\M$ is a $(\C, \D)$-bimodule category. 
  Then there exists an isomorphism of categories
  \begin{equation} \label{eq:}
    \Xi \colon \Z_{l}^s(F, \M, G) \to \Z_{r}^s(F, \M, G), \qquad \qquad \Xi(M,\sigma) = (M, \sigma^{-1}),
  \end{equation}
  which is the identity on morphisms.
\end{lma}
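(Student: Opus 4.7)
The plan is to verify that the formula $\Xi(M,\sigma) = (M,\sigma^{-1})$ actually lands in $\Z_{r}^{s}(F,\M,G)$, then check compatibility with morphisms, and finally observe that the construction is manifestly invertible.

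First, I would show that if $\sigma_X \colon M \lhd F(X) \to G(X) \rhd M$ is an invertible left half-braiding, then the family of inverses $\sigma_X^{-1} \colon G(X)\rhd M \to M \lhd F(X)$ is a right half-braiding. Naturality of $\sigma^{-1}$ in $X$ is immediate from naturality of $\sigma$ since all components are isomorphisms. For the hexagon axiom dual to \equref{center diag1}, one takes the defining hexagon for $\sigma$, observes that every arrow in it (namely $\sigma_Y\lhd \id_{F(X)}$, $\id_{G(Y)}\rhd \sigma_X$, and $\sigma_{Y\otimes X}$) is invertible (the first two because $\lhd$ and $\rhd$ are functors so they preserve isomorphisms, and the third by hypothesis), and inverts the whole diagram; the structural coherence maps $F^2$ and $G^2$ remain in place, producing exactly the hexagon for a right half-braiding. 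The unit axiom \equref{center diag2} is handled identically by inverting $\sigma_I$.

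Next, I would check that the morphism condition transfers. Given a morphism $f \colon (M,\sigma)\to(N,\tau)$ in $\Z_{l}^{s}(F,\M,G)$, precomposing the identity $(\id_{G(X)}\rhd f)\sigma_X = \tau_X(f\lhd \id_{F(X)})$ with $\sigma_X^{-1}$ and postcomposing with $\tau_X^{-1}$ yields
\begin{equation*}
\tau_X^{-1}(\id_{G(X)}\rhd f) = (f\lhd \id_{F(X)})\sigma_X^{-1},
\end{equation*}
which is precisely the morphism condition for $f \colon (M,\sigma^{-1})\to(N,\tau^{-1})$ in $\Z_{r}^{s}(F,\M,G)$.

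Finally, because $\Xi$ acts as the identity on the underlying objects and morphisms, it automatically preserves identities and vertical composition, hence is a functor. The analogous construction $(M,\tilde\sigma)\mapsto (M,\tilde\sigma^{-1})$ starting from the right strong center and producing an object of the left strong center provides a strict two-sided inverse to $\Xi$, so $\Xi$ is an isomorphism of categories. The only potentially delicate point is the first step, but since invertibility is assumed in the strong centers, the ``hard'' direction reduces to the trivial observation that reversing all arrows in a commutative square of isomorphisms yields a commutative square; there is no genuine obstacle.
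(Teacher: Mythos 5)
Your proposal is correct and follows essentially the same route as the paper: cancel the invertible half-braiding components against their inverses in the hexagon and unit diagrams to obtain the right half-braiding axioms for $\sigma^{-1}$, transfer the morphism condition by pre- and postcomposing with inverses, and invert the construction symmetrically. The one point you rightly flag — that $F^2$ and $G^2$ need not be invertible, so one only inverts the $\sigma$-arrows — is exactly how the paper's computation proceeds.
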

\begin{proof}
  Suppose that $\sigma$ is an invertible left half-braiding on an object $M\in \M$.
  We show that $\sigma^{-1}$ defines a right half-braiding. Precomposing the equation in \equref{center diag1} by 
	$(\sigma_Y^{-1}\lhd\id_{F(X)})(\id_{G(X)}\rhd\sigma_X^{-1})$ and postcomposing by $\sigma_{YX}^{-1}$ yields 
	the desired compatibility of right half-braidings with the lax functor structures. 
  Analogous calculations show that $\sigma^{-1}$ is compatible with the lax units of $F$ and $G$ and that $\Xi$ sends any morphism 
	in the strong left center to a morphism of the strong right center.
  The proof is concluded by constructing $\Xi^{-1}$ in the same spirit as $\Xi$.
  That is,  by mapping invertible right half-braidings to their inverses.
\qed\end{proof}


\medskip

The construction of (left) strong twisted center categories can be seen as a result of the following composition of 2-functors:
$$\C\x\D\x\Bimod \stackrel{(F,G)}{\to} \E\x\E\x\Bimod \stackrel{\Z_\E}{\to} \Z(\E)\x\Mod$$
$$\hspace{1cm} \M \hspace{0,8cm} \mapsto \hspace{0,6cm} {}_G\M_F \hspace{0,8cm} \mapsto \hspace{0,4cm} \Z_\E({}_G\M_F)$$
where $(F,G)$ denotes precomposing the left and right action by $F$ and $G$, respectively, 
and $\Z_\E$ is defined as in \cite[Section 3.4]{ENO}. The term ``twisted'' is motivated by this composition. Namely, if $F$ and $G$ 
are strong monoidal functors, a $\C\x\D$-bimodule category structure is twisted by them into an $\E$-bimodule structure.

\section{Categorical centers as a data in a tricategory}

At the core of our investigation in this section are (weak) twisted centers and their interpretation from a higher categorical point of view.
We will show that center categories are hom-categories of hom-bicategories of a particular tricategory. 
Namely, the tricategory of bicategories with a single object.

\subsection{Categorical centers as (co)lax natural transformations}

For the interpretation of center categories from the perspective of 2-categories we first recall the definitions of lax 
and colax functors between bicategories and of lax and colax natural transformations between the latter. 

\begin{defn}
  A \emph{lax functor} $(\F, \F^2, \F^0) \colon \K \to \K'$ between 2-categories consists of
  \begin{enumerate}
    \itemsep0em
  \item an assignment $\Ob\K\ni A \mapsto\F(A) \in \Ob\K'$,  
	\item for all $A,B \in \Ob\K$ a local functor $\F_{A,B} \colon \K(A, B) \to \K'(\F(A), \F(B))$,
  \item a natural transformation 
    \begin{equation*}
      \F^2_{g,f} \colon F(g) \otimes' F(f) \Rightarrow F(g \otimes f), \qquad \qquad \text{ for } (g,f)\in \K(B,C)\times\K(A,B),
    \end{equation*}
		and
  \item a natural transformation 
    \begin{equation*}
      \F^0_{A} \colon id_{\F(A)} \Rightarrow \F(id_A), \qquad \qquad \text{ for } A\in\Ob\K,
    \end{equation*}
  \end{enumerate}
  so that $\F^2$ and $\F^0$ satisfy \emph{associativity} and \emph{unitality} laws. \\
	When the natural transformations $\F^2$ and $\F^0$ are directed in the opposite direction 
	and satisfy \emph{coassociativity} and \emph{counitality} laws, one has a {\em colax functor}. 
	One speaks about a {\em pseudofunctor} if $\F^2$ and $\F^0$ are isomorphisms. 	 
\end{defn}

Lax transformations can be defined both for lax and colax functors. The same holds for colax transformations, so that there are four 
variations of definitions, depending on the situation.

\begin{defn} \delabel{colax tr}
  Let $(\F, \F^2, \F^0) \colon\K\to\K'$ and $(\G, \G^2, \G^0)\colon\K\to\K'$ be lax functors between 2-categories.
  A colax natural transformation $\chi\colon\F\Rightarrow\G$ consists of
\begin{enumerate}
\item a 1-cell $\chi_A\colon \F (A) \to \G (A)$ for each object $A\in\Ob\K$, and
\item for every pair of objects $A, B \in Ob\K$ a collection of 2-cells
  \begin{equation} \eqlabel{def-colax}
    \{\chi_f \colon \chi_B\circ \F_{A,B}(f) \Rightarrow \G_{A,B}(f)\circ \chi_A \mid f \in \K(A,B) \}
  \end{equation}
\end{enumerate}
natural in $f$ subject to \emph{colax multiplicativity}
\begin{equation} \label{eq: lax-mult}
\begin{tikzcd}[sep = 1.2em]
	&&&&&&&& {\F(B)} \\
	{\F(A)} && {\F(B)} && {\F(C)} && {\F(A)} && {} && {\F(C)} \\
	&&&&& {=} \\
	{\G(A)} && {\G(B)} && {\G(C)} && {\G(A)} &&&& {\G(C)}
	\arrow["{\F(f)}"{description}, from=2-1, to=2-3]
	\arrow["{\F(g)}"{description}, from=2-3, to=2-5]
	\arrow["{\chi_C}"{description}, from=2-5, to=4-5]
	\arrow["{\G(g)}"{description}, from=4-3, to=4-5]
	\arrow["{\chi_B}"{description}, from=2-3, to=4-3]
	\arrow["{\chi_A}"{description}, from=2-1, to=4-1]
	\arrow["{\G(f)}"{description}, from=4-1, to=4-3]
	\arrow[""{name=0, anchor=center, inner sep=0}, "{\G(gf)}"{description}, curve={height=40pt}, from=4-1, to=4-5]
	\arrow["{\F(f)}"{description}, curve={height=-12pt}, from=2-7, to=1-9]
	\arrow["{\F(g)}"{description}, curve={height=-12pt}, from=1-9, to=2-11]
	\arrow["{\F(gf)}"{description}, from=2-7, to=2-11]
	\arrow["{\G(gf)}"{description}, from=4-7, to=4-11]
	\arrow["{\chi_C}"{description}, from=2-11, to=4-11]
	\arrow["{\chi_A}"{description}, from=2-7, to=4-7]
	\arrow["{\chi_f}"', shorten <=11pt, shorten >=11pt, Rightarrow, from=2-3, to=4-1]
	\arrow["{\chi_g}", shorten <=11pt, shorten >=11pt, Rightarrow, from=2-5, to=4-3]
	\arrow["{\chi_{gf}}"', shorten <=22pt, shorten >=22pt, Rightarrow, from=2-11, to=4-7]
	\arrow[draw=none, from=2-9, to=2-7]
	\arrow["{\F^2}"', shorten <=2pt, shorten >=2pt, Rightarrow, from=1-9, to=2-9]
	\arrow["{\G^2}"', shorten <=3pt, shorten >=3pt, Rightarrow, from=4-3, to=0]
\end{tikzcd}
\end{equation}
and \emph{colax unitality}
\begin{equation} \label{eq:colax-unity}
  \begin{tikzcd}
	{\F(A)} && {\F(A)} && {\F(A)} && {\F(A)} \\
	&&& {=} \\
	{\G(A)} && {\G(A)} && {\G(A)} && {\G(A)}
	\arrow["{\chi_A}"{description}, from=1-1, to=3-1]
	\arrow["{\chi_A}"{description}, from=1-3, to=3-3]
	\arrow[""{name=0, anchor=center, inner sep=0}, "{\id_{\G(A)}}"{description}, curve={height=-18pt}, from=3-1, to=3-3]
	\arrow[""{name=1, anchor=center, inner sep=0}, "{\G(\id_A)}"{description}, curve={height=18pt}, from=3-1, to=3-3]
	\arrow["{\id_{\F(A)}}"{description}, curve={height=-18pt}, from=1-1, to=1-3]
	\arrow[""{name=2, anchor=center, inner sep=0}, "{\id_{\F(A)}}"{description}, curve={height=-18pt}, from=1-5, to=1-7]
	\arrow[""{name=3, anchor=center, inner sep=0}, "{\F(\id_A)}"{description}, curve={height=18pt}, from=1-5, to=1-7]
	\arrow["{\chi_A}"{description}, from=1-7, to=3-7]
	\arrow["{\chi_A}"{description}, from=1-5, to=3-5]
	\arrow["{\G(\id_A)}"{description}, curve={height=18pt}, from=3-5, to=3-7]
	\arrow["\id"', shift right=1, shorten <=11pt, shorten >=11pt, Rightarrow, from=1-3, to=3-1]
	\arrow["{\chi_{\id_A}}", shift left=1, shorten <=11pt, shorten >=11pt, Rightarrow, from=1-7, to=3-5]
	\arrow["{\F^0}"', shorten <=5pt, shorten >=5pt, Rightarrow, from=2, to=3]
	\arrow["{\G^0}"', shorten <=5pt, shorten >=5pt, Rightarrow, from=0, to=1]
\end{tikzcd}
\end{equation}
If the 2-cells of $\chi$ are invertible, it  is called a \emph{pseudonatural transformation}. 
In case they are identities, one speaks of a \emph{strict natural transformation}.
\end{defn}

By reverting the direction of the 2-cells of $\chi$ one obtains the notion of a {\em lax natural transformation} between lax functors.

\smallskip

We start by a simple observation that entails a marvelous fact. 

\begin{prop}\prlabel{Shim}
  Let $F, G \colon \C \to  \D$ be lax monoidal functors. The objects of the weak twisted center $\Z_l^w(F,\D,G)$ are canonically in bijection with colax natural transformations $\chi \colon Del(F) \Rightarrow Del(G)$ between the induced lax functors $Del(F), Del(G): Del(\C)\to Del(\D)$.
  Under this identification, the objects of the strong center correspond to pseudonatural transformations.
\end{prop}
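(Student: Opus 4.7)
The plan is to unpack a colax natural transformation $\chi \colon Del(F) \Rightarrow Del(G)$ in the sense of \deref{colax tr} and to show that its components are literally the data of a left half-braiding relative to $F$ and $G$ from \deref{relative center cat}, once the orientation convention of \rmref{reversed} is taken into account. Since $Del(\C)$ has a single object, call it $*$, the data of $\chi$ reduces to a single 1-cell $\chi_*$ in $Del(\D)$, which is an object $M := \chi_* \in \D$, together with a family of 2-cells $\chi_f \colon \chi_* \circ Del(F)(f) \Rightarrow Del(G)(f) \circ \chi_*$ natural in the 1-cell $f$ of $Del(\C)$, that is, natural in $X := f \in \C$. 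Under the reversed convention, horizontal composition of 1-cells in $Del(\D)$ translates to the tensor product of $\D$ in the same order, and the actions $\lhd$, $\rhd$ on $\D$ viewed as a $(\D,\D)$-bimodule category are both given by $\ot$; hence $\chi_f$ becomes a natural family $\sigma_X \colon M \lhd F(X) \to G(X) \rhd M$ of morphisms in $\D$.

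I would then verify that the two coherence axioms for $\chi$ translate exactly to the hexagon and the triangle defining a half-braiding. For the colax multiplicativity \eqref{eq: lax-mult} at a pair of composable 1-cells $g = Y$, $f = X$, the three 2-cells on the left-hand pasting, namely $\chi_g \circ Del(F)(f)$, $Del(G)(g) \circ \chi_f$ and $Del(G)^2 \circ \chi_*$, identify under the above bijection with $\sigma_Y \lhd \id_{F(X)}$, $\id_{G(Y)} \rhd \sigma_X$ and $G^2 \rhd M$ respectively; on the right-hand pasting $\chi_* \circ Del(F)^2$ and $\chi_{gf}$ identify with $M \lhd F^2$ and $\sigma_{Y \ot X}$. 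The resulting equality of vertical composites is precisely \equref{center diag1}. Similarly, colax unitality \eqref{eq:colax-unity} at $*$ recovers \equref{center diag2}, using that $Del(F)^0$ and $Del(G)^0$ coincide with the unit constraints $F^0$ and $G^0$ of the lax monoidal structures. The assignment $(M, \sigma) \mapsto \chi$ is visibly invertible: from any colax natural transformation one reads off $M = \chi_*$ and $\sigma_X = \chi_X$, and the dictionary above runs backwards verbatim.

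For the final assertion, note that $(M, \sigma) \in \Z^s_l(F, \D, G)$ precisely when every $\sigma_X$ is invertible; under the bijection this is the statement that every component $\chi_f$ is invertible, which is by \deref{colax tr} the defining property of a pseudonatural transformation. The only delicate point throughout is the careful bookkeeping of the convention of \rmref{reversed}: with the reversed convention in force, the horizontal composite $x \circ y$ of 1-cells in $Del(\C)$ represents the tensor product $X \ot Y$ of the original monoidal category (and not its reverse), so that the source $\chi_* \circ Del(F)(f)$ of $\chi_f$ translates to $M \ot F(X)$ with $M$ on the left, as required for a left half-braiding.
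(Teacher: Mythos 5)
Your proposal is correct and follows essentially the same route as the paper: unpack the single 1-cell component $\chi_*$ as the underlying object, use the reversed-tensor convention of \rmref{reversed} to read the 2-cell components as morphisms $M\ot F(X)\to G(X)\ot M$, and match colax multiplicativity and unitality with \equref{center diag1} and \equref{center diag2}. The paper's proof is just a terser version of the same argument, and your handling of the orientation convention agrees with its parenthetical remark about not flipping the order of factors.
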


\begin{proof}
Since both bicategories $Del(\C), Del(\D)$ have a single object, there is a single 1-cell component of $\chi \colon Del(F) \Rightarrow Del(G)$, which is 
a distinguished object $D_{\chi}=\chi_*$ in $\D$. The 2-cell components of $\chi$ amount to 
morphisms $\chi_X: D_{\chi}\ot F(X)\to G(X)\ot D_{\chi}$ in $\D$ natural in $X$ (mind that we do not flip the order of factors when translating from $Del(\C)$ to $\D$, as we assume the reversed tensor product in the sense of \rmref{reversed}), and the 
colax multiplicativity and unity translate into the commuting diagrams \equref{center diag1} and \equref{center diag2}. The second claim is immediate. 
\qed\end{proof}

\begin{rem} \rmlabel{co/lax}
  An analogous statement to the previous proposition for right half-braidings can be obtained by considering lax instead of colax natural transformations between lax functors.
\end{rem}


To obtain a bicategorical interpretation of the morphisms in a center category, we need to recall the definition of modifications. 

\begin{defn}
A modification $a: \chi\Rrightarrow\psi$ between two colax natural transformations $\chi, \psi:\F\Rightarrow\G:\K\to\K'$ consists of a family of 2-cells $a_A:\chi(A)\Rightarrow\psi(A)$, indexed by the objects $A\in\Ob\K$, such that for every 1-cell $f\in\K(A,B)$ we have:
\begin{equation} \label{eq:modification-condition}
\begin{tikzcd}[sep=1.2em]
	{\F(A)} &&& {\G(A)} && {\F(A)} &&& {\G(A)} \\
	&&&& {=} \\
	{\F(B)} &&& {\G(B)} && {\F(B)} &&& {\G(B)}
	\arrow[""{name=0, anchor=center, inner sep=0}, "{\chi_A}"{description}, from=1-1, to=1-4]
	\arrow["{F(f)}"{description}, from=1-1, to=3-1]
	\arrow["{G(f)}"{description}, from=1-4, to=3-4]
	\arrow["{\chi_B}"{description}, from=3-1, to=3-4]
	\arrow[""{name=1, anchor=center, inner sep=0}, "{\psi_A}"{description}, curve={height=-30pt}, from=1-1, to=1-4]
	\arrow["{\chi_f}", shorten <=16pt, shorten >=16pt, Rightarrow, from=3-1, to=1-4]
	\arrow["{\psi_A}"{description}, from=1-6, to=1-9]
	\arrow["{G(f)}"{description}, from=1-9, to=3-9]
	\arrow["{F(f)}"{description}, from=1-6, to=3-6]
	\arrow[""{name=2, anchor=center, inner sep=0}, "{\psi_B}"{description}, from=3-6, to=3-9]
	\arrow[""{name=3, anchor=center, inner sep=0}, "{\chi_B}"{description}, curve={height=30pt}, from=3-6, to=3-9]
	\arrow["{\psi_f}", shorten <=16pt, shorten >=16pt, Rightarrow, from=3-6, to=1-9]
	\arrow["{a_A}", shorten <=4pt, shorten >=4pt, Rightarrow, from=0, to=1]
	\arrow["{a_B}", shorten <=4pt, shorten >=4pt, Rightarrow, from=3, to=2]
\end{tikzcd}
\end{equation}
\end{defn}

For two lax functors $\F,\G:\B\to\B'$ among bicategories let $\text{Colax}(\F, \G)$ and $\text{Lax}(\F, \G)$ denote the 
categories of colax (respectively lax) natural transformations and their modifications.
Similarly, $\text{Pseudo}(F, G)$ denotes the category of pseudonatural transformations and their modifications.

\begin{prop} \prlabel{isos}
  Let $F, G \colon \C \to \D$ be lax monoidal functors. There are canonical isomorphisms of categories: 
$$\Z_l^w(F,\D, G)\iso\text{Colax}(Del(F),Del(G)),\qquad\qquad \Z_l^s(F,\D,G)\iso\text{Pseudo}(Del(F),Del(G)),$$
$$\Z_r^w(F,\D,G)\iso\text{Lax}(Del(F),Del(G)),\qquad\qquad \Z_r^s(F,\D,G)\iso\text{Pseudo}(Del(F),Del(G)).$$
\end{prop}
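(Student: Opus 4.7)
The plan is to build on \prref{Shim} which already supplies the bijection on objects for the first isomorphism, and to extend it by translating morphisms to modifications. The three remaining statements will follow either by a ``right-handed'' dualisation of the argument or by restriction to invertible components.

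First I would upgrade the object-level bijection of \prref{Shim} to a full functor $\Phi\colon\Z_l^w(F,\D,G)\to\text{Colax}(Del(F),Del(G))$. On objects $\Phi$ sends $(M,\sigma)$ to the colax natural transformation $\chi$ whose unique 1-cell component is $\chi_*=M$ and whose 2-cell family is $\chi_X=\sigma_X$, exactly as in \prref{Shim}. On a morphism $f\colon(M,\sigma)\to(N,\tau)$ I would assign the modification $a\colon\chi\Rrightarrow\psi$ with single component $a_*=f$. The key verification is that the general modification axiom \eqref{eq:modification-condition} specialises, in the one-object setting of $Del(\C)$ and $Del(\D)$ and under the reversed-tensor convention of \rmref{reversed}, to precisely the commutativity condition \eqref{eq:morph-of-center}. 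This is essentially a diagrammatic rewriting: the four 1-cells $\F(f)$, $\G(f)$, $\chi_A$, $\chi_B$ in \eqref{eq:modification-condition} become the tensor factors $F(X),G(X),M,N$, and the pasting of $a_A$ above $\chi_f$ versus $\psi_f$ above $a_B$ translates into the two sides of \eqref{eq:morph-of-center}.

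Next I would construct $\Psi$ inverse to $\Phi$. Given $\chi\in\text{Colax}(Del(F),Del(G))$, one sets $M=\chi_*$, $\sigma_X=\chi_X$, and reads the colax multiplicativity \eqref{eq: lax-mult} and colax unitality \eqref{eq:colax-unity} off as \equref{center diag1} and \equref{center diag2}; on a modification $a$ one reads off its unique component $a_*$ as a morphism in $\D$ satisfying \eqref{eq:morph-of-center}. That $\Phi\Psi$ and $\Psi\Phi$ are identities is then immediate because all data on both sides is literally the same, only repackaged. Functoriality of $\Phi$ (preservation of identities and vertical composition) is also automatic: vertical composition of modifications in the one-object case is composition of their unique components in $\D(M,N)$, which matches composition of morphisms in $\Z_l^w(F,\D,G)$.

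For the right weak version, I would invoke \rmref{co/lax}: reversing the 2-cell direction throughout the proof above converts left half-braidings into right half-braidings and colax natural transformations into lax ones, and the analogues of \equref{center diag1}--\equref{center diag2} match lax multiplicativity and unitality. This yields $\Z_r^w(F,\D,G)\iso\text{Lax}(Del(F),Del(G))$ by an identical argument. Finally, both strong-center statements follow by restricting to the full subcategories cut out by invertibility of the half-braidings: under $\Phi$ an invertible left half-braiding corresponds exactly to a colax natural transformation whose 2-cell components are invertible, i.e., a pseudonatural transformation, and symmetrically for the right version, which is also consistent with \lref{left-iso-right} identifying $\Z_l^s(F,\D,G)$ with $\Z_r^s(F,\D,G)$. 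The only nontrivial step I expect is the careful bookkeeping in the translation of \eqref{eq:modification-condition} to \eqref{eq:morph-of-center}, especially keeping track of the order-reversal of horizontal composition versus tensor product noted in \rmref{reversed}; everything else is a direct unpacking of definitions.
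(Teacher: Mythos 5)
Your proposal is correct and follows essentially the same route as the paper: the object-level bijection is taken from \prref{Shim}, modifications are identified with morphisms of the weak center by observing that a modification between one-object deloopings has a single component satisfying exactly \eqref{eq:morph-of-center}, and the remaining three isomorphisms are obtained by dualising (lax vs.\ colax) and restricting to invertible components. The paper's proof is just a terser version of the same argument, likewise treating only the left weak case explicitly and declaring the rest analogous.
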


\begin{proof}
We only prove the claims for left center categories, as the other cases are analogous. 
  Let $\chi, \psi \colon Del(F) \Rightarrow Del(G)$ be colax natural transformations and write $(D_\chi, \chi), (E_\psi, \psi) 
	\in \Z_w^l(F,\D,G)$ for their corresponding objects in the weak left center.
  Since $Del(\C)$ and $Del(\C)$ are deloopings of monoidal categories, any modification $a\colon \chi\Rrightarrow\psi$ is defined by a single 
	morphism $f\colon D_{\chi}\to E_{\psi}$ satisfying for all $X\in\C$ the following identity:
 \begin{equation}  \eqlabel{morph}
\gbeg{2}{6}
\got{1}{D} \got{1}{F(X)} \gnl
\gcl{1} \gcl{1} \gnl
\glmptb \gnot{\hspace{-0,34cm}\chi_X} \grmptb \gnl
\gcl{1} \gbmp{f} \gnl
\gcl{1} \gcl{1} \gnl
\gob{1}{G(X)} \gob{1}{E}
\gend=
\gbeg{2}{6}
\got{1}{D} \got{1}{F(X)} \gnl
\gcl{1} \gcl{1} \gnl
\gbmp{f} \gcl{1} \gnl
\glmptb \gnot{\hspace{-0,34cm}\psi_X} \grmptb \gnl
\gcl{1} \gcl{1} \gnl
\gob{1}{G(X)} \gob{1}{E.}
\gend
\end{equation}
This is precisely the defining equation of a morphism in the weak center and the claim follows.
\qed\end{proof}

\bigskip



In the above proposition we started from two lax monoidal functors between monoidal categories to obtain the result. One can also start 
from two lax functors between 2-categories in a specific way to obtain an analogous result. For this purpose recall the interplay 
between ordinary categories and bicategories encoded in the delooping isomorphism \equref{eq:delooping} and the discussion after \rmref{reversed}.

\begin{prop}  \prlabel{Naidu-gen}
Let $\E$ be a monoidal category, $\K$ a bicategory which has at least two objects $0$ and $1$, and assume that $\F,\G:Del(\E)\to\K$ 
are two lax functors such that $\F(*)=0$ and $\G(*)=1$. There are canonical isomorphisms of categories 
$$\Z_l^w(F,\M, G)\iso\text{Colax}(\F,\G),\qquad\qquad \Z_l^s(F,\M,G)\iso\text{Pseudo}(\F,\G),$$
$$\Z_r^w(F,\M, G)\iso\text{Lax}(\F,\G),\qquad\qquad \Z_r^s(F,\M,G)\iso\text{Pseudo}(\F,\G),$$
for a suitable bimodule category $\M$ 
and lax monoidal functors $F$ and $G$. 
\end{prop}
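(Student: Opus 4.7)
The plan is to identify from the triple $(\K, \F, \G)$ the data $(\C, \D, \M, F, G)$ that produces the twisted center whose objects we want to describe. Following the correspondence between bimodule categories and bicategories recalled after \rmref{reversed}, the natural choice is $\D := \K(0,0)$ and $\C := \K(1,1)$ with their monoidal structure given by horizontal composition, $\M := \K(0,1)$ as a $(\C,\D)$-bimodule category (left action by post-composition, right action by pre-composition), and $F := \F_{*,*} \colon \E \to \D$, $G := \G_{*,*} \colon \E \to \C$ the local lax monoidal functors induced by $\F$ and $\G$. With these identifications in place, the statement reduces to a bookkeeping exercise that follows the template of \prref{Shim} and \prref{isos}, only now the codomain bicategory has two distinguished objects rather than one.

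I would then unpack the data of a colax natural transformation $\chi \colon \F \Rightarrow \G$. Its single 1-cell component is an object $\chi_* \in \K(0,1) = \M$, and its 2-cell components $\chi_X \colon \chi_* \circ \F(X) \Rightarrow \G(X) \circ \chi_*$, indexed by $X \in \E$, translate via \rmref{reversed} to morphisms $\chi_* \lhd F(X) \to G(X) \rhd \chi_*$ in $\M$, natural in $X$. The colax multiplicativity and unitality axioms then become exactly the coherence diagrams \equref{center diag1} and \equref{center diag2} defining a left half-braiding relative to $F$ and $G$. Hence $\chi \mapsto (\chi_*, \{\chi_X\}_{X \in \E})$ is a well-defined bijection between colax natural transformations $\F \Rightarrow \G$ and objects of $\Z_l^w(F, \M, G)$; restricting to invertible 2-cells yields the pseudonatural/strong-center case, and the analogous analysis applied to lax natural transformations gives the right half-braiding variants, as anticipated in \rmref{co/lax}.

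For functoriality on arrows I would mirror the argument of \prref{isos}: a modification $a \colon \chi \Rrightarrow \psi$ has a single 2-cell component $a_* \in \M(\chi_*, \psi_*)$, and its defining axiom collapses onto \equref{morph}, the defining equation for morphisms of $\Z_l^w(F, \M, G)$. Compatibility with vertical composition is then immediate, so the assignment upgrades to a functor which is manifestly invertible. The only non-automatic point is to apply the orientation convention of \rmref{reversed} consistently so that $\chi_* \circ \F(X)$ corresponds to $\chi_* \lhd F(X)$ rather than $F(X) \rhd \chi_*$; once this is fixed, no coherence issue beyond the single-object case treated in \prref{isos} arises, and no real obstacle is expected.
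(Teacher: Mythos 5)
Your proposal is correct and follows essentially the same route as the paper: the same identifications $\D=\K(0,0)$, $\C=\K(1,1)$, $\M=\K(0,1)$, $F=\F_{*,*}$, $G=\G_{*,*}$, the same unpacking of the unique 1-cell and the 2-cell components of a colax transformation into an object of $\M$ with a left half-braiding, and the same reduction of the morphism-level statement to the argument of \prref{isos}. Nothing further is needed.
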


\begin{proof} 
We set $\D:=\K(0,0), \C:=\K(1,1)$ and $\M:=\K(0,1)$. Then the two  lax functors yield lax monoidal functors $F=\F_{*,*}:\E\to\D, G=\G_{*,*}:\E\to\C$. 
The unique 1-cell component of a colax natural transformation $\chi:\F\to\G$ is an object $M$ living in the $(\C\x\D)$-bimodule category $\M$, 
as it is a 1-cell mapping $0=\F(*)\to\G(*)=1$ in $\K$. The 2-cell component yields a half-braiding given by morphisms 
$\chi_X: M\lhd F(X)\to G(X)\rhd M$ in $\M$, for every 1-endocell in $\K$, {\em i.e.} an object $X\in\E$. The rest follows 
as in \prref{isos}. 
\qed\end{proof}

Recall that we recovered the left center category from \cite{GNN} as $\Z^l_\C(\M)\coloneqq \Z^s_l(\Id,\M,\Id)$, and similarly the right one, 
and that they are isomorphic by \leref{left-iso-right}.

\begin{cor} \colabel{Naidu}
Let $\C$ be a monoidal category, $\K$ a bicategory which has at least two objects $0$ and $1$, and assume that $\F,\G:Del(\C)\to\K$ 
are two pseudofunctors such that $\F(*)=0,\G(*)=1$ and $\F_{*,*}=\G_{*,*}=\Id_\C$. There are canonical isomorphisms of categories 
$$\Z^l_\C(\M)\iso\Pseudo(\F,\G)\iso\Z^r_\C(\M)$$
where $\M=\K(0,1)$. 
\end{cor}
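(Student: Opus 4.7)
The plan is to deduce the corollary as an immediate specialisation of \prref{Naidu-gen} combined with \leref{left-iso-right}. The hypotheses of the corollary are precisely those of \prref{Naidu-gen} with the extra strengthening that $\F$ and $\G$ are pseudofunctors (rather than merely lax) and that the local functors at the unique object $* \in \Ob Del(\C)$ are $\F_{*,*} = \G_{*,*} = \Id_\C$. Therefore the lax monoidal functors produced in the proof of \prref{Naidu-gen} are $F = G = \Id_\C$, so that $\Z_l^w(F,\M,G) = \Z_l^w(\Id,\M,\Id)$, and similarly for the right variant.

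First, I would apply \prref{Naidu-gen} in its pseudo version to obtain canonical isomorphisms of categories
\[
\Z_l^s(\Id,\M,\Id) \iso \Pseudo(\F,\G) \iso \Z_r^s(\Id,\M,\Id),
\]
where $\M = \K(0,1)$ is regarded as a $(\C,\C)$-bimodule category via the horizontal composition of $\K$ with the endomorphism categories $\K(0,0) = \C$ and $\K(1,1) = \C$ identified through the chosen local functors. By the definitions recalled just before \leref{left-iso-right}, namely $\Z^l_\C(\M) \coloneqq \Z^s_l(\Id,\M,\Id)$ and $\Z^r_\C(\M) \coloneqq \Z^s_r(\Id,\M,\Id)$, this yields the two asserted isomorphisms.

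Second, to see that the two isomorphisms fit together consistently (so that the composite isomorphism $\Z^l_\C(\M) \iso \Z^r_\C(\M)$ is the canonical one), I would verify that this composite agrees with the functor $\Xi$ of \leref{left-iso-right}. Concretely, under the correspondence of \prref{Naidu-gen}, a pseudonatural transformation $\chi \colon \F \Rightarrow \G$ has underlying 1-cell $M \in \M$ and invertible 2-cell components $\chi_X \colon M \lhd X \to X \rhd M$; reading these as a left half-braiding gives $(M,\chi)$, while reading their inverses as a right half-braiding gives $(M, \chi^{-1})$. This is exactly the assignment performed by $\Xi$, so the diagram of three isomorphisms commutes.

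The main (and essentially only) obstacle is a bookkeeping one: making sure that the two identifications $\K(0,0) \iso \C$ and $\K(1,1) \iso \C$ obtained from $\F_{*,*} = \Id = \G_{*,*}$ are used consistently when endowing $\M = \K(0,1)$ with its $(\C,\C)$-bimodule structure, and that the reversed-tensor convention of \rmref{reversed} is respected throughout so that the half-braidings $M \lhd X \to X \rhd M$ land in the correct direction. Once these conventions are fixed, both statements reduce transparently to \prref{Naidu-gen} and \leref{left-iso-right}.
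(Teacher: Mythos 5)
Your proposal is correct and follows exactly the route the paper intends: the corollary is stated as an immediate specialisation of \prref{Naidu-gen} (with $F=G=\Id_\C$, using the definitions $\Z^l_\C(\M)=\Z^s_l(\Id,\M,\Id)$ and $\Z^r_\C(\M)=\Z^s_r(\Id,\M,\Id)$) together with \leref{left-iso-right}, which is precisely what you do. Your additional check that the composite isomorphism agrees with $\Xi$ is a sensible extra verification but does not constitute a different approach.
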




\subsection{The bicategory of center categories}

The bicategorical perspective gives us a deeper insight of ``why'' the twisted center categories can be composed between each other, 
and in particular ``why'' ``$F\x F$-twisted'' center categories are monoidal. Namely, for fixed bicategories $\B$ and $\B'$ there are bicategories 
$\Lax_{lx}(\B,\B')$ and $\Lax_{clx}(\B,\B')$ of lax functors $\B \to \B'$, lax (resp. colax) transformations and their modifications, see~\cite[Theorem 4.4.11]{JY}.
The composition of (co)lax natural transformations (as 1-cells in these bicategories) 
is given by what is known as the vertical composition of such transformations. The hom-categories 
are $\text{Lax}(\F, \G)$ and $\text{Colax}(\F, \G)$ for two  lax functors $\F,\G$, respectively. The horizontal composition 
in the two bicategories corresponds to the composition of the right, respectively left, weak twisted center categories between themselves. 
In particular, the fact that $\Lax(\F,\F)$ and $\text{Colax}(\F, \F)$ are monoidal entails that the ``$F\x F$-twisted'' 
center categories are monoidal as well. We explain now how this is achieved.

Let us fix a monoidal category $\E$ and a 2-category $\K$ of which we think as a collection of monoidal categories and composable bimodule categories between them. According to \prref{Naidu-gen}, we can identify the hom-categories of $\Lax_{clx}(Del(\E),\K)$ with the weak left centers relative to lax monoidal functors from $\E$ to the  endo-1-categories of $\K$.
In order to emphasize this point of view, we write $\Z_l^w(\E,\K) \coloneqq \Lax_{clx}(Del(\E),\K)$. 
Given lax functors $\F, \G \colon Del(\E) \to \K$, we write $\Z_l^w(\F,\K,\G)$ for the hom-category of $\Z_l^w(\E, \K)$ with source $\F$ 
and target $\G$. (Observe that they are categories $\Z_l^w(F,\M,G)$ from \prref{Naidu-gen}, where $F,G$ are lax monoidal functors 
induced by $\F,\G$ and $\M=\K(0,1)$.)

The above observation readily implies that (weak left) centers can be organized into a bicategory. We refer to $\Z_l^w(\E, \K)$ as the 
\emph{bicategory of weak left centers of $\K$ relative to $\E$}. Our next result translates its horizontal composition into a 
`pre-tensor product' between weak relative centers. Given two vertically composable 2-cells $x\stackrel{\alpha}{\Rightarrow} y\stackrel{\beta}{\Rightarrow}z: 
A\to B$, we denote their vertical composition in equations by the fraction $\frac{\alpha}{\beta}$.

\begin{prop} \prlabel{center-2-cat}
  Let $\E$ be a monoidal category and $\K$ a 2-category. The weak left centers of $\K$ relative  to $\E$ form a bicategory 
	$\Z_l^w(\E, \K)$. For all lax functors  $\F, \G, \HH: Del(\E)\to\K$, 
	its horizontal composition is given by 
  \begin{equation} \eqlabel{eq:}
    \begin{aligned}
      \circ_{\F,\G,\HH} \colon \Z_l^w(\G, \K, \HH) \times \Z_l^w(\F, \K, \G) &\to \Z_l^w(\F, \K, \HH) \\[0.2\baselineskip]
      \left((N, \tau), (M, \sigma) \right) &\mapsto (N\circ M, \frac{\id_N \circ\sigma}{\tau \circ\id_M}) \\
      (g,f) &\mapsto (g\circ f)
    \end{aligned}
  \end{equation}  
\end{prop}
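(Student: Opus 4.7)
The strategy is to deduce the bicategory structure of $\Z_l^w(\E,\K)$ directly from the classical bicategory $\Lax_{clx}(\B,\B')$ of lax functors, colax natural transformations, and modifications (\cite[Theorem 4.4.11]{JY}), specialized to $\B = Del(\E)$ and $\B' = \K$. Under \prref{Naidu-gen}, the hom-categories of this bicategory are precisely the weak left centers $\Z_l^w(\F,\K,\G)$, so defining $\Z_l^w(\E,\K) \coloneqq \Lax_{clx}(Del(\E),\K)$ produces the claimed bicategory with the correct objects and hom-categories. The associators and unitors are inherited from $\K$ and reduce to identities because $\K$ is strict.

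The content of the proposition then reduces to unpacking how the horizontal composition of $\Lax_{clx}(Del(\E),\K)$ translates along the identification of \prref{Naidu-gen}. Given colax natural transformations $\chi\colon\F\Rightarrow\G$ and $\psi\colon\G\Rightarrow\HH$ corresponding to $(M,\sigma)$ and $(N,\tau)$, their vertical composite $\psi\cdot\chi$ has single 1-cell component $\psi_*\circ\chi_* = N\circ M$, and its 2-cell component at an object $X\in\E$ (viewed as a 1-endocell of $*$ in $Del(\E)$) is the standard pasting of $\psi_X$ alongside $\chi_X$ with identity 2-cells on the remaining strands. Writing this pasting out in the 2-category $\K$ yields exactly $(\tau_X\circ\id_M)\cdot(\id_N\circ\sigma_X)$, which is the fraction in the statement. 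The horizontal composition of modifications similarly reduces to the horizontal composition $g\circ f$ of their single 2-cell components in $\K$, recovering the morphism part of the formula.

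The main task left is purely verification: one must check that the composite 2-cell really is a half-braiding on $N\circ M$, i.e., that the diagrams \equref{center diag1} and \equref{center diag2} commute for the composite, and analogously for morphisms. This is automatic, since vertical composites of colax natural transformations remain colax natural transformations, and \prref{Naidu-gen} translates colax multiplicativity and unity directly into the half-braiding axioms. If one prefers a direct verification, it amounts to a diagram chase interleaving the axioms for $\sigma$ and $\tau$, the naturality of both, and the compatibilities of the lax structures $\F^2,\G^2,\HH^2$ and $\F^0,\G^0,\HH^0$. Since $\K$ is a 2-category, no associator coherence arises, so the only real obstacle is the bookkeeping of identifying the abstract pasting in $\Lax_{clx}$ with the explicit horizontal compositions in $\K$ that appear in the statement.
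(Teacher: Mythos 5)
Your proposal is correct and follows essentially the same route as the paper: both identify $\Z_l^w(\E,\K)$ with $\Lax_{clx}(Del(\E),\K)$ (a bicategory by \cite[Theorem 4.4.11]{JY}) and then translate its composition of colax natural transformations and modifications into the stated formulas via \prref{Naidu-gen}. Your added remarks on unpacking the pasting and on the half-braiding axioms being automatic are just a more explicit rendering of the same argument.
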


\begin{proof}
  The first claim is merely a recapitulation that $\Z_l^w(\E,\K) = \Lax_{clx}(Del(\E),\K)$ is a bicategory.
  By \prref{Naidu-gen}, the objects and morphisms of any weak center are in correspondence with appropriate colax natural 
	transformations and modifications. 
  Applying this identification, one immediately obtains the above formulas from their respective composition rules. 
	(Observe that $M\in{}_\C\M_\D=\K(0,1)$ and $N\in{}_\Pp\N_\C=\K(1,2)$, where $\F(*)=0,\G(*)=1$ and $\HH(*)=2$, and 
	 $\D:=\K(0,0), \C:=\K(1,1)$ and $\Pp:=\K(2,2)$, in the sense of the proof of \prref{Naidu-gen}.) 
\qed\end{proof}

Analogous considerations hold for the bicategory of weak right centers $\Z_r^w(\E,\K) \coloneqq \Lax_{lx}(Del(\E),\K)$. 

The consequences of the above result are best exemplified by considering the bicategory $\Z_l^w(\E, \E)$ of all twisted weak left centers 
$\Z_l^w(F,\E,G)$ of $\E$. The weak left analogue $\Z_l^w(\E)$ of the Drinfel`d center corresponds to the endo-category on the identity functor of $\E$.
The previous result recovers its monoidal structure.
Moreover, it implies that, if only the left or right action of $\E$ on itself is twisted, the resulting center canonically becomes a right, respectively, left module category over $\Z_l^w(\E)$.
This gives a theoretical justification to the constructions of \cite{HZ} involving the anti-Drinfel`d center and its opposite.

\medskip

Let us consider the pseudo-pseudo version of the bicategories $\Lax_{lx}(\B,\B')$ and $\Lax_{clx}(\B,\B')$. It is a 
bicategory that we denote by $\Ps_{ps}(\B,\B')$ with hom-categories $\Ps(\F,\G)$ for pseudofunctors $\F,\G:\B\to\B'$. 
In the particular case when $\F=\G=\Id_\B$ we have the center category $\Z(\B)$ of the bicategory $\B$ introduced in 
\cite{Ehud}. Our above considerations allude to the possibility to consider ``twisted center categories of the bicategory $\B'$''. 
In this case the twisting is done through the pseudo- (or lax or colax) functors $\B\to\B'$.

\subsection{The tricategory that encomapsses strong center categories}

Let $\Bicat$ denote the tricategory of bicategories, pseudofunctors, pseudonatural transformations, and modifications.

\begin{rem}  \rmlabel{tricat-bicat}
Although it is sufficient to consider only lax functors and colax (resp. lax) natural transformations in order to recover left 
(resp. right) weak twisted center category, in order to form a tricategory whose 2- and 3-cells are some kind of bicategorical functors 
and transformations, 
both kinds of cells should be of {\em pseudo} type, {\em i.e.} they both should have isomorphisms for their respective defining structures. 
Namely, on one hand, in order to be able to define the horizontal composition of two lax (or colax) natural transformations, both lax and 
colax structures of the functors they act to are needed (see {\em e.g.} Second problem in \cite{Lack-Icons}). 
On the other hand, one sees from the diagram (11.3.12) of \cite[Chapter 11.3]{JY} that in order to construct the isomorphism 
interchange 3-cell one needs both pseudofunctors and pseudonatural transformations.
\end{rem}

One clearly has:

\begin{prop} 
 By delooping, the full sub-tricategory of $\Bicat$ whose objects are bicategories with a single object gives rise to a tricategory 
$\Bicat^*$ with monoidal categories as objects, strong monoidal functors as 1-cells, and for each pair $F, G\colon \C \to \D$ 
of such functors the strong center $\Z_{l}^{s}(F,\D,G)\iso\Z_{r}^{s}(F,\D, G)$ as hom-category.
\end{prop}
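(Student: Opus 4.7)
The plan is to invoke the two main ingredients already established in the excerpt: the delooping correspondence of~\equref{eq:delooping} and the identifications of strong center categories with categories of pseudonatural transformations given in~\prref{isos}. Since $\Bicat$ is a tricategory, any full sub-tricategory is again a tricategory; in particular the full sub-tricategory $\Bicat_*$ spanned by bicategories with a single object is a tricategory in its own right. The assertion then amounts to transporting this structure along delooping and recognising the hom-bicategories.

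First I would check that the assignment $Del \colon \C \mapsto Del(\C)$ restricts on objects to a bijection between monoidal categories (up to the reversal convention of~\rmref{reversed}) and one-object bicategories; this is precisely~\equref{eq:delooping} at the level of objects. Next, for the 1-cells of $\Bicat^*$ I would observe that pseudofunctors $Del(\C)\to Del(\D)$ between one-object bicategories correspond exactly to strong monoidal functors $\C\to\D$; this is the pseudofunctor row of the delooping isomorphism. For the 2-cells and 3-cells I would apply \prref{isos}: given two strong monoidal functors $F,G\colon\C\to\D$, pseudonatural transformations $Del(F)\Rightarrow Del(G)$ together with their modifications form the category $\Pseudo(Del(F),Del(G))$, which is canonically isomorphic to both $\Z_l^s(F,\D,G)$ and $\Z_r^s(F,\D,G)$. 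The isomorphism between left and right strong centers is moreover~\leref{left-iso-right}, so the choice between the two conventions is immaterial and yields the same hom-category up to canonical isomorphism.

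With these identifications in place, the tricategorical composition of $\Bicat^*$ translates into a tricategorical structure on monoidal categories. Concretely, horizontal composition of pseudofunctors of one-object bicategories becomes composition of strong monoidal functors, vertical composition of pseudonatural transformations corresponds, via~\prref{center-2-cat}, to the horizontal composition of weak (and hence strong) centers, and the coherence modifications of $\Bicat$ restrict to those of $\Bicat^*$ and then transport to the strong center setting. All coherence data (associators, unitors and their higher coherences) is inherited directly from $\Bicat$ since we are dealing with a full sub-tricategory; no new coherence needs to be verified.

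The only genuine subtlety, and therefore the main obstacle, is bookkeeping: the order-reversal built into the delooping convention of~\rmref{reversed} must be tracked carefully so that the horizontal composition of pseudonatural transformations matches the composition of strong centers given by~\prref{center-2-cat} in the correct order, and so that the left/right distinction of half-braidings is consistent across the identifications $\Z_l^s\iso\Pseudo\iso\Z_r^s$. Once these conventions are fixed, the tricategorical axioms are automatic from the ambient structure of $\Bicat$, and $\Bicat^*$ is obtained by relabelling objects, 1-, 2-, and 3-cells along the delooping bijection. In particular, the hom-bicategory of $\Bicat^*$ at a pair $(\C,\D)$ has strong monoidal functors $F\colon\C\to\D$ as objects and the strong centers $\Z_l^s(F,\D,G)\iso\Z_r^s(F,\D,G)$ as hom-categories, as claimed.
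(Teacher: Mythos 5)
Your proposal is correct and follows exactly the route the paper intends: the paper states this proposition without proof (prefaced by ``one clearly has''), relying precisely on the ingredients you assemble — the delooping correspondence \equref{eq:delooping}, the identification of strong centers with $\Pseudo(Del(F),Del(G))$ from \prref{isos}, the left/right isomorphism of \leref{left-iso-right}, and inheritance of the tricategory structure from the full sub-tricategory of $\Bicat$. Your additional care about the order-reversal convention of \rmref{reversed} is a sensible explicit check of a point the paper leaves implicit.
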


In view of \rmref{tricat-bicat} it becomes clear why we do not speak of a tricategory that contains weak center categories as 
bottom hom-categories. 

\medskip

By further restricting the tricategory $\Bicat^*$ to finite tensor categories (in the sense of \cite{EGNO}), one has that every hom-bicategory 
$\Bicat^*(\C,\D)$ is precisely the bicategory $\TF(\C,\D)$ from \cite[Section 3.2]{Shim}, of tensor functors between finite tensor categories $\C$ and $\D$. 
As the author works in a context of rigid categories, he shows that for every hom-category $Z_l^w(F,G)$ of the bicategory 
$\TF(\C,\D)$ and every object $(V,\sigma_V)\in Z_l^w(F,G)$ the transformation $\sigma_V$ is invertible (\cite[Lemma 3.1]{Shim}), and 
$(V,\sigma_V)$ has a left and a right dual object in $Z_l^w(G,F)$. We will generalize this to 2-categories in \prref{w=s}. 


\medskip

{\bf Question.} In \prref{Naidu-gen} we dealt with lax functors $\F,\G:Del(\E)\to\K'$ from a one-object bicategory. 
If we allow for the domain bicategory 
$\K$ to have more than one object, we wonder what the categories of (co)lax transformations $\chi: \F\Rightarrow\G$ recover. In particular, 
what is the ``meaning in nature'' of the 2-cell components $\chi_X: \chi(B)\circ F(X)\to G(X)\circ \chi(A)$ for 1-cells $X:A\to B$ in $\K$?

\subsection{String diagrams in 2-categories}

In \ssref{adj} and throughout \seref{Bilax} and \seref{2-cat Bilax} we will use string diagrams for 2-categories
(again relying on the biequivalence of any bicategory with a 2-category).
Our string diagrams are read from top to bottom and (in the context of 2-categories) from right to left. The domains and codomains
of the strings stand for 1-cells, while the strings themselves and boxes stand for 2-cells. The 0-cells are to be understood from the context
(reading the 1-cells from right to left). Observe that such string diagrams which depict 2-cells in a 2-category $\K$ 
acting on the same underlying 0-cells $A\in\Ob\K$ (that is, morphisms in the monoidal categories $\K(A,A)$ for every $A$) correspond exactly 
to the string diagrams in the monoidal categories $\K(A,A)$ (due to the isomorphism \equref{eq:delooping}). This is even more clear if 
$\K=Del(\C)$ for some monoidal category $\C$. 

\bigskip

Let $\F$ be a lax functor and $\G$ a colax functor. We depict their lax, respectively colax structures by diagrams in the following way:
$$
\gbeg{3}{3}
\got{1}{\F(g)} \got{3}{\F(f)} \gnl
\gwmu{3} \gnl
\gob{3}{\F(gf)}
\gend \qquad 
\gbeg{3}{3}
\got{1}{id_{\F(A)}} \gnl
\gu{1} \gnl
\gob{1}{\F(id_A)}
\gend  \qquad 
\gbeg{3}{3}
\got{3}{\G(gf)} \gnl
\gwcm{3} \gnl
\gob{1}{\G(g)} \gob{3}{\G(f)} \gnl
\gend  \qquad 
\gbeg{3}{3}
\got{1}{\G(id_A)} \gnl
\gcu{1} \gnl
\gob{1}{id_{\F(A)}}
\gend  
$$
where $g,f$ are composable 1-cells and $A$ a 0-cell in the domain 2-category. We will often simplify the notation $\circ$ 
for the composition of 1-cells by concatenation.

\bigskip

Observe that a colax transformation between two lax functors, \deref{colax tr}, is nothing but a distributive law between lax functor
structures that is moreover natural in 1-cells.
Similarly, a colax transformation between two colax functors is a distributive law between colax functor structures that is moreover natural in 1-cells.
In string diagrams we may write the latter as follows:
\begin{equation}\eqlabel{phi-colax}
\gbeg{3}{5}
\got{1}{\chi_C}\got{2}{\F(gf)} \gnl
\gcl{1} \gcmu \gnl
\glmptb \gnot{\hspace{-0,34cm}\chi_g} \grmptb \gcl{1} \gnl
\gcn{1}{1}{1}{-1} \glmptb \gnot{\hspace{-0,34cm}\chi_f} \grmptb \gnl
\gob{-1}{\G(g)} \gvac{2} \gob{1}{\G(f)} \gob{1}{\chi_A}
\gend=
\gbeg{3}{5}
\got{1}{\chi_C}\got{2}{\F(gf)} \gnl
\gcn{1}{1}{2}{2} \gcn{1}{1}{2}{2} \gnl
\gvac{1} \hspace{-0,34cm} \glmpt \gnot{\hspace{-0,34cm}\chi_{gf}} \grmptb \gnl
\gvac{1} \hspace{-0,2cm} \gcmu \gcn{1}{1}{0}{1} \gnl
\gvac{1} \gob{0}{\G(g)} \gvac{1} \gob{1}{\G(f)} \gob{1}{\chi_A}
\gend;
\quad
\gbeg{3}{5}
\got{1}{\chi_A}\got{2}{\F(id_A)}\gnl
\gcl{1} \gcl{1} \gnl
\glmptb \gnot{\hspace{-0,34cm}\chi_{id_A}} \grmptb \gnl
\gcu{1} \gcl{1} \gnl
\gob{3}{\chi_A}
\gend=
\gbeg{3}{5}
\got{1}{\chi_A}\got{2}{\F(id_A)}\gnl
\gcl{1} \gcl{1} \gnl
\gcl{2}  \gcu{1} \gnl
\gob{1}{\chi_A}
\gend;
\quad
\gbeg{3}{6}
\got{1}{\chi_B}\got{1}{\F(x)} \gnl
\gcl{1} \gcl{1} \gnl
\gcl{1} \glmptb \gnot{\hspace{-0,34cm}\F(\alpha)} \grmp \gnl
\glmptb \gnot{\hspace{-0,34cm}\chi_y} \grmptb \gnl
\gcl{1} \gcl{1} \gnl
\gob{1}{G(y)} \gob{2}{\chi_A}
\gend=
\gbeg{2}{6}
\gvac{1} \got{1}{\chi_B}\got{1}{\F(x)} \gnl
\gvac{1} \gcl{1} \gcl{1} \gnl
\gvac{1} \glmptb \gnot{\hspace{-0,34cm}\chi_x} \grmptb \gnl
\glmp \gnot{\hspace{-0,34cm}\G(\alpha)} \grmptb\gcl{1} \gnl
\gvac{1} \gcl{1} \gcl{1} \gnl
\gvac{1} \gob{0}{G(y)} \gob{3}{\chi_A}
\gend
\end{equation}
for any 2-cell $\alpha:x\Rightarrow y:A\to B$.
(A colax transformation between two lax functors is determined by string diagrams that are both vertically and horizontally symmetric
to the above ones.)
Dually (taking opposite 2-cells, {\em i.e.} taking vertically symmetric diagrams to those in \equref{phi-colax}) we obtain string diagrams
for lax transformations between two lax functors.

\subsection{Adjoints in 2-categories} \sslabel{adj}

Dualisability plays a prominent role in the study of monoidal categories and the closely related subject of (extended) topological quantum field theories, see \cite{BD}. For example, Section 2 of \cite{DSS} and Section 4 of \cite{HZ} discuss and utilize `duals' of bimodule categories and centers. 
In the following, we want to provide a 2-categorical perspective on these constructions, thereby giving a theoretical underpinning for some of the ad-hoc constructions of \cite{HZ}.

We start by briefly recalling the notion of adjoint 1-cells in 2-categories, see for example \cite{Gray, Lack1}. 
Let $f \colon A\to B$ be a 1-cell in a 2-category $\K$.
A \emph{left adjoint} of $f$ is a 1-cell $u \colon B \to A$ together with two 2-cells $\eta: \id_A\to u f$ and $\varepsilon: f u \to \id_B$ such that
\begin{equation*}
  \frac{\Id_f\circ \eta}{\varepsilon\circ\Id_f} = \id_f
  \qquad \qquad \text{and} \qquad \qquad
   \frac{\eta\circ\Id_u}{\Id_u \circ\varepsilon} = \id_u.
\end{equation*}
Similarly, a \emph{right adjoint} of $f$ is a 1-cell $v \colon B \to A $ together with two 2-cells $ \bar \eta \colon \id_B \to f v$ and $\bar \varepsilon \colon v f \to \id_A$ such that
\begin{equation*}
  \frac{\Id_v \circ \bar \eta}{\bar \varepsilon\circ\Id_v} = \id_f
  \qquad \qquad \text{and} \qquad \qquad
   \frac{\bar \eta \circ\Id_f}{f \circ \bar \varepsilon} = \id_f.
\end{equation*}

In string diagrams we will write $\eta=
\gbeg{2}{1}
\gdb \gnl
\gend$ and $\Epsilon=
\gbeg{2}{1}
\gev \gnl
\gend$, and they satisfy the laws:
$$\gbeg{3}{4}
\got{1}{} \got{3}{u} \gnl
\gdb  \gcl{1} \gnl
\gcl{1} \gev \gnl
\gob{1}{u}
\gend=\Id_u
\qquad\textnormal{and}\qquad 
\gbeg{3}{4}
\got{1}{f} \got{1}{} \gnl
\gcl{1} \gdb \gnl
\gev \gcl{1} \gnl
\gob{1}{} \gob{4}{\hspace{-0,1cm}f}
\gend=\Id_f.
$$
Pseudofunctors $\F:\K\to\K'$ preserve (and reflect) adjoints and we have:
\begin{equation} \eqlabel{ev-coev on F}
\gbeg{3}{2}
\got{1}{\F(f)} \got{3}{\F(u)} \gnl
\gwev{3} \gnl
\gend
=
\gbeg{3}{4}
\got{1}{\F(f)} \got{3}{\F(u)} \gnl
\gwmu{3} \gnl
\glmp \gnot{\hspace{-0,34cm}\F(\Epsilon)} \grmptb \gnl
\gvac{1} \gcu{1} \gnl
\gend,
\qquad
\gbeg{3}{2}
\gwdb{3} \gnl
\gob{1}{\F(u)} \gob{3}{\F(f)} \gnl
\gend
=
\gbeg{3}{4}
\gvac{1} \gu{1} \gnl
\glmp \gnot{\hspace{-0,34cm}\F(\eta)} \grmptb \gnl
\gwcm{3} \gnl
\gob{1}{\F(u)} \gob{3}{\F(f).} \gnl
\gend
\end{equation}


\bigskip

In the aforementioned \cite{HZ}, certain trace-like morphisms were considered in order to implement pivotal structures on the Drinfel`d 
centers. This involves a `lift' of the notion of duals, i.e. adjoints, to the setting of centers of bimodule categories.
With our interpretation of $\Z_l^w(\E,\K) = \Lax_{clx}(Del(\E),\K)$ in \prref{center-2-cat} as a bicategory, for a monoidal category $\E$ 
and a bicategory $\K$, we can now derive a more conceptual version of this construction. 

\begin{defn}
  We refer to a 2-category $\K$ as \emph{autonomous} if all 1-cells in $\K$ have left and right adjoints.
\end{defn}

The most prominent example of an autonomous 2-category is given by the delooping $Del(\E)$ of an autonomous (rigid) 
monoidal category $\E$. We will prove at the end of this section that the full sub-bicategory $\Z_l^{w-ps}(\E, \K)$ 
whose objects are pseudofunctors is autonomous. We will use the following fact, which is immediately proved: 

\begin{lma} \lelabel{lax-ps}
Let $\F,\G:\K\to\K'$ be pseudofunctors between 2-categories, and suppose that $\chi:\F\Rightarrow\G$ is a colax natural transformation 
with respect to their lax functors structures. Then $\chi$ is a colax natural transformation of pseudofunctors. 
\end{lma}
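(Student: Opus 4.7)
The subtle point is that a pseudofunctor carries both a lax and a colax structure --- the latter being the pointwise inverse of the former --- and hence a priori there are two candidate notions of ``colax natural transformation $\F \Rightarrow \G$'': the one of \deref{colax tr}, built from the lax structures $\F^2, \F^0, \G^2, \G^0$, and the one described pictorially in \equref{phi-colax}, built from the colax structures $(\F^2)^{-1}, (\F^0)^{-1}, (\G^2)^{-1}, (\G^0)^{-1}$. The assertion of the lemma is that these two notions coincide when $\F$ and $\G$ are pseudofunctors. The hypothesis provides the first form; I need to produce the second.

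For the multiplicativity axiom, I would start from the colax multiplicativity relation of \deref{colax tr},
\begin{equation*}
  (\G^2_{g,f} \circ \Id_{\chi_A}) \cdot (\chi_g \circ \Id_{\F(f)}) \cdot (\Id_{\G(g)} \circ \chi_f) \;=\; \chi_{gf} \cdot (\Id_{\chi_C} \circ \F^2_{g,f}),
\end{equation*}
and whisker both sides by $\Id_{\chi_C} \circ (\F^2_{g,f})^{-1}$ on the source end and by $(\G^2_{g,f})^{-1} \circ \Id_{\chi_A}$ on the target end. Since $\F^2_{g,f}$ and $\G^2_{g,f}$ are isomorphisms, these whiskerings cancel the ones induced by $\F^2$ and $\G^2$, and the identity that remains is precisely the first equation of \equref{phi-colax}. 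The same strategy, applied to the invertible 2-cells $\F^0_A$ and $\G^0_A$, converts the lax unit axiom of \deref{colax tr} into the colax unit axiom (the second equation of \equref{phi-colax}).

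Finally, naturality of $\chi$ in 1-cells (the third equation of \equref{phi-colax}) involves no lax or colax structure 2-cells and is already part of the definition of a colax natural transformation between lax functors. Thus every axiom needed for $\chi$ to be a colax natural transformation of pseudofunctors is either given outright or obtained by inverting structure 2-cells. There is no conceptual obstacle; the only thing to keep in mind is the interplay between vertical composition and horizontal whiskering, which is routine once one writes out the diagrams.
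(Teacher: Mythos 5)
Your argument is correct and is precisely the ``immediate'' proof the paper has in mind (the paper states the lemma without giving a proof): since $\F^2,\F^0,\G^2,\G^0$ are invertible, pre- and post-whiskering the colax multiplicativity and unitality axioms of \deref{colax tr} with the inverse 2-cells cancels the lax structure maps and yields exactly the corresponding axioms of \equref{phi-colax} for the colax structures, while naturality in 1-cells carries over unchanged. One cosmetic remark: in your displayed multiplicativity equation the two middle factors $\chi_g\circ\Id_{\F(f)}$ and $\Id_{\G(g)}\circ\chi_f$ should be interchanged for the vertical composite to typecheck, but this is a transcription slip and does not affect the argument.
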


The following result is a 2-categorical interpretation of the fact that the half-braidings over autonomous monoidal 
categories are automatically invertible, see \cite[Lemma 3.1]{Shim}. 

\begin{prop} \prlabel{w=s}
  Let $\K$ be an autonomous 2-category and $\E$  an autonomous monoidal category. For any pair of pseudofunctors 
	$\F, \G \colon Del(\E) \to \K$, 
	the weak and strong centers coincide. Moreover, the inverse of a left half-braiding is a right half-braiding, that is: 
	$\Z_l^{w-ps}(\F, \K, \G) = \Z_l^{s-ps}(\F, \K, \G) \iso \Z_r^{s-ps}(\F, \K, \G) = \Z_r^{w-ps}(\F, \K, \G) $. 
\end{prop}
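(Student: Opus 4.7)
The plan is to reduce the whole statement to showing that, under the hypotheses, every colax natural transformation $\chi\colon\F\Rightarrow\G$ has invertible 2-cell components. Once this is done, the equality $\Z_l^{w\x ps}(\F,\K,\G)=\Z_l^{s\x ps}(\F,\K,\G)$ is immediate; the isomorphism $\Z_l^{s\x ps}(\F,\K,\G)\iso\Z_r^{s\x ps}(\F,\K,\G)$ is precisely \leref{left-iso-right}; and the equality $\Z_r^{s\x ps}(\F,\K,\G)=\Z_r^{w\x ps}(\F,\K,\G)$ follows by the symmetric argument applied to lax natural transformations, i.e.\ to right half-braidings.

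First I would unpack the data. Writing $M\coloneqq\chi_\ast$ for the unique 1-cell component of $\chi$, the 2-cells form a family $\chi_X\colon M\circ\F(X)\Rightarrow\G(X)\circ M$ indexed by the 1-cells $X\colon\ast\to\ast$ of $Del(\E)$, i.e.\ by the objects of $\E$. Autonomy of $\E$ supplies, for each $X$, an adjoint $X^\vee$ in $Del(\E)$ with unit $\eta$ and counit $\varepsilon$; since $\F$ and $\G$ are pseudofunctors they preserve adjoints, so $\F(X^\vee)$ is adjoint to $\F(X)$ in $\K$, with unit and counit obtained from $\F(\eta),\F(\varepsilon)$ by conjugation with the pseudo-compositors $\F^2,\F^0$, and analogously for $\G$.

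The candidate inverse $\psi_X\colon\G(X)\,M\Rightarrow M\,\F(X)$ is constructed as the mate of $\chi_{X^\vee}\colon M\,\F(X^\vee)\Rightarrow\G(X^\vee)\,M$ under these two induced adjunctions in $\K$: heuristically, one inserts the unit of the $\F$-adjunction on the right of $\G(X)M$, applies $\chi_{X^\vee}$ in the middle to slide $M$ past $\F(X^\vee)$, and then cancels the outer $\G(X)\G(X^\vee)$ using the counit of the $\G$-adjunction. To check that $\psi_X$ is a two-sided inverse of $\chi_X$, the tools I would combine are (i) the zigzag equations in $\K$, (ii) the naturality of $\chi$ applied to the 2-cells $\eta,\varepsilon$ viewed as 2-cells in $Del(\E)$, (iii) the colax multiplicativity of $\chi$ applied to the composable pairs $(X,X^\vee)$ and $(X^\vee,X)$, and (iv) the colax unitality of $\chi$ together with the invertibility of $\F^0,\G^0$, used to identify $\chi_{\id_\ast}$ and cleanly collapse the identity 1-cell produced by the zigzag.

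The main obstacle I anticipate is the coherence bookkeeping in this mate calculation: every occurrence of $\F(X^\vee\circ X)$ has to be rewritten as $\F(X^\vee)\F(X)$ via $\F^2$, the units and counits of the induced adjunctions in $\K$ differ from the images of $\eta,\varepsilon$ by $\F^2,\F^0,\G^2,\G^0$, and one has to verify that all of these coherence 2-cells cancel after invoking naturality and (co)lax multiplicativity. This is morally the 2-categorical analogue of \cite[Lemma 3.1]{Shim} and is the only nontrivial point. Once the calculation goes through, $\chi$ has invertible components; by \leref{lax-ps} it is then a pseudonatural transformation between the pseudofunctors $\F,\G$, proving membership in $\Z_l^{s\x ps}(\F,\K,\G)$. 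The remaining equality $\Z_r^{w\x ps}(\F,\K,\G)=\Z_r^{s\x ps}(\F,\K,\G)$ is then obtained by the same procedure with the roles of left and right half-braidings swapped, using the opposite adjoints of $X$ in $Del(\E)$.
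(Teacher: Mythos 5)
Your proposal is correct and follows essentially the same route as the paper: reduce everything to showing that the 2-cell components of a colax natural transformation between pseudofunctors are invertible, construct the candidate inverse as the mate of $\chi_{X^{*}}$ under the adjunctions transported along $\F$ and $\G$ (this is exactly the 2-cell $\gamma_X$ in the paper's proof), and verify the two-sided inverse property using the zigzag identities, naturality of $\chi$ with respect to $\eta,\varepsilon$, and the colax (co)multiplicativity and (co)unitality of $\chi$ together with \leref{lax-ps} and the compatibility \equref{ev-coev on F} of pseudofunctors with evaluation and coevaluation. The remaining identifications (via \leref{left-iso-right} and the symmetric argument for right half-braidings) are handled as in the paper.
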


\begin{proof}\label{prop:adjoints-imply-inverses}
 For the first claim it suffices to show that the component 2-cells of any colax natural transformation between pseudofunctors $\chi \colon \F \Rightarrow \G$ are invertible.
  Hereto, we fix an object $X\in \E$ and write $X^{\ast}$ for its left adjoint. 
Starting from the left hand-side below and applying the left equality in \equref{ev-coev on F}, 
the coherence of $\chi$ with the functor's 
multiplicativity, naturality of $\chi$ with respect to $\Epsilon_X$ and the coherence of $\chi$  with respect to the functor's counitalities 
(mind \leref{lax-ps}) we reach the equality with the right hand-side:
$$
\gbeg{4}{5}
\got{0}{D_{\chi}} \got{2}{\hspace{0,2cm}\F(X)} \got{2}{\F(X^*)} \gnl
\glmptb \gnot{\hspace{-0,34cm}\chi_X} \grmptb \gcl{1} \gnl
\gcl{1} \glmptb \gnot{\hspace{-0,34cm}\chi_{X^*}} \grmptb \gnl
\gev \gcl{1} \gnl
\gvac{2} \gob{1}{D_{\chi}}
\gend
=
\gbeg{3}{5}
\got{1}{D_{\chi}}\got{2}{\F(X)} \got{2}{\F(X^*)} \gnl
\gcl{3} \gwev{3} \gnl
\gob{1}{D_{\chi}.}
\gend
$$
Then the 2-cell:
\begin{equation*}
\gamma_X:=
\gbeg{4}{5}
\got{0}{\G(X)} \got{3}{D_{\chi}} \gnl
\gcl{2} \gcl{1} \gdb \gnl
\gvac{1} \glmptb \gnot{\hspace{-0,34cm}\chi_{X^*}} \grmptb \gcl{2} \gnl
\gev \gcl{1} \gnl
\gvac{2} \gob{1}{D_{\chi}} \gob{2}{\F(X)}  
\gend  
\end{equation*}
is clearly a left inverse of $\chi_X: D_{\chi} \circ \F(X) \to \G(X) \circ D_{\chi}$. The analogous reasoning, but this time using the coherence 
of $\chi$ with the comultiplicativity and unitality of the functors, shows that $\gamma_X$ is also a right inverse of $\chi_X$. 
The last statement is proved directly. 
\qed\end{proof}

\medskip

Our next result states that adjoints can be `lifted' to weak centers.

\begin{prop} \prlabel{auton}
  Suppose $\E$ to be an autonomous monoidal category and $\K$ to be an autonomous 2-category.
  Then the full sub-bicategory $\Z^{w\x ps}_l(\E, \K) \subset \Z_l^w(\E, \K)$ whose objects are pseudofunctors is autonomous.
\end{prop}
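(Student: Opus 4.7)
By \prref{w=s}, every $1$-cell $(M,\sigma):\F\to\G$ of $\Z^{w\x ps}_l(\E,\K)$ has invertible half-braiding, so $\sigma_X: M\circ\F(X)\to\G(X)\circ M$ is an isomorphism for every $X\in\E$. Since $\K$ is autonomous, the underlying $1$-cell $M$ admits in $\K$ a left adjoint $M^L$ with unit $\eta:\id_{\F(*)}\to M^L\circ M$ and counit $\epsilon:M\circ M^L\to\id_{\G(*)}$, and a right adjoint $M^R$ with unit $\bar\eta:\id_{\G(*)}\to M\circ M^R$ and counit $\bar\epsilon:M^R\circ M\to\id_{\F(*)}$. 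The plan is to build the left and right adjoints of $(M,\sigma)$ in $\Z^{w\x ps}_l(\E,\K)$ on the underlying $1$-cells $M^L$ and $M^R$, with unit and counit modifications whose underlying $2$-cells in $\K$ are precisely those of the adjunctions $M^L\dashv M$ and $M\dashv M^R$.

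The structural $2$-cell of the right adjoint $(M^R,\tau^R):\G\to\F$ I define as the double mate of $\sigma_X^{-1}$ through $M\dashv M^R$ applied on both sides:
\[
  \tau^R_X \;=\; (\bar\epsilon\,\F(X)\,M^R)\circ(M^R\,\sigma_X^{-1}\,M^R)\circ(M^R\,\G(X)\,\bar\eta),
\]
where juxtaposition denotes horizontal composition (whiskering) and $\circ$ denotes vertical composition. Dually, $\tau^L_X:M^L\circ\G(X)\to\F(X)\circ M^L$ is defined as the inverse of the analogous double mate of $\sigma_X$ through $M^L\dashv M$; this inverse exists precisely because $\sigma_X$ is invertible.

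The proof then reduces to three verifications. First, that $\tau^R$ and $\tau^L$ satisfy the half-braiding coherences \equref{center diag1} and \equref{center diag2}, so that $(M^R,\tau^R)$ and $(M^L,\tau^L)$ are genuine $1$-cells of $\Z^{w\x ps}_l(\E,\K)$. Second, that the $2$-cells $\bar\eta,\bar\epsilon$ (respectively $\eta,\epsilon$) define modifications in the sense of \equref{morph} between the appropriate composite pseudonatural transformations; by the explicit formula of $\tau^R$ and the composition rule of \prref{center-2-cat}, this is an almost direct consequence of the triangle identities for $M\dashv M^R$ (resp.\ $M^L\dashv M$). Third, the zigzag identities for the adjunctions $(M,\sigma)\dashv(M^R,\tau^R)$ and $(M^L,\tau^L)\dashv(M,\sigma)$ reduce at the level of underlying $2$-cells to the triangle identities in $\K$, hence hold automatically.

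The main obstacle is the first verification: the half-braiding coherence of $\tau^R$ (and $\tau^L$) with the pseudofunctor structures $\F^2,\G^2,\F^0,\G^0$. Conceptually this is expected, since by \prref{Naidu-gen} the bicategory $\Z^{w\x ps}_l(\E,\K)$ is a sub-bicategory of the bicategory of pseudofunctors, pseudonatural transformations and modifications with autonomous target $\K$, where it is classical that adjoints exist pointwise via mate calculus. Concretely, the verification is a string-diagrammatic calculation: one unfolds the definition of $\tau^R$, invokes the coherence of $\sigma$ with $\F^2,\G^2$ (the diagram \equref{center diag1}) and with $\F^0,\G^0$ (the diagram \equref{center diag2}), and collapses the auxiliary $M\circ M^R$ loops using the triangle identities.
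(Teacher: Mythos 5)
Your proposal is correct and follows essentially the same route as the paper: invoke \prref{w=s} to invert the half-braiding, transport it to the adjoint of $M$ in $\K$ via the mate construction (your formula for $\tau^R_X$ is exactly the paper's string diagram for the half-braiding on ${}^{\ast}M$), check the coherences \equref{center diag1} and \equref{center diag2} by a diagrammatic computation, and observe that the unit and counit of the adjunction in $\K$ are morphisms in the center, so the triangle identities hold automatically. No substantive difference.
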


\begin{proof}  
  We only show that any 1-cell in $\Z^{w\x ps}_l(\E, \K)$, that is, an object 
	$(M,\chi) \in \Z^{w\x ps}_l(\F, \K,\G)$, has a right adjoint living in $\Z^{w\x ps}_l(\G, \K,\F)$.
  The case of left adjoints is analogous.
  Due to \prref{w=s}, the half-braiding $\chi$, {\em i.e.} the 2-cells $\chi_X: M \circ \F(X) \to \G(X)\circ M$, is invertible.
  We utilize this fact, to define a  half-braiding on ${}^{\ast}M$ as shown in the next diagram
  \begin{equation*} 
\sigma_X:=
\gbeg{4}{5}
\got{0}{{}^{\ast}M} \got{3}{\G(X)} \gnl
\gcl{2} \gcl{1} \gdb \gnl
\gvac{1} \glmptb \gnot{\hspace{-0,34cm}\chi_X^{-1}} \grmptb \gcl{2} \gnl
\gev \gcl{1} \gnl
\gvac{2} \gob{1}{\F(X)} \gob{2}{{}^{\ast}M.}  
\gend
  \end{equation*}
  A direct computation shows that it satisfies the Equations (2) and (3). 
  For example, we have
$$
\gbeg{7}{7}
\got{0}{{}^{\ast}M} \got{3}{\G(Y)} \got{3}{\G(X)} \gnl
\gcl{2} \gcl{1} \gdb \gcl{2} \gnl
\gvac{1} \glmptb \gnot{\hspace{-0,34cm}\chi_Y^{-1}} \grmptb \gcl{2} \gvac{1} \gdb \gnl
\gev \gcl{2} \gvac{1} \glmptb \gnot{\hspace{-0,34cm}\chi_X^{-1}} \grmptb \gcl{3} \gnl
\gvac{3} \gev \gcl{1} \gnl
\gvac{2} \gwmu{4} \gnl
\gvac{3} \gob{1}{\F(Y\ot X)} \gvac{2} \gob{1}{{}^{\ast}M} 
\gend
=
\gbeg{5}{6}
\got{0}{{}^{\ast}M} \got{3}{\G(Y)} \got{0}{\G(X)} \gnl
\gcl{1} \gcl{1} \gcl{1} \gdb \gnl
\gcl{2} \gcl{1} \glmptb \gnot{\hspace{-0,34cm}\chi_X^{-1}} \grmptb \gcl{2} \gnl
\gvac{1} \glmptb \gnot{\hspace{-0,34cm}\chi_Y^{-1}} \grmptb \gcl{1} \gnl
\gev \gmu  \gcl{1} \gnl
\gvac{2} \gob{1}{\F(Y\ot X)}  \gob{3}{{}^{\ast}M}
\gend
=
\gbeg{6}{6}
\got{0}{{}^{\ast}M} \got{3}{\G(Y)} \got{0}{\G(X)} \gnl
\gcl{1} \gwmu{3} \gnl
\gcn{2}{2}{1}{3} \gcl{1} \gdb \gnl 
\gvac{2} \glmptb \gnot{\hspace{-0,34cm}\chi_{Y\ot X}^{-1}} \grmptb \gcl{2} \gnl
\gvac{1} \gev \gcl{1} \gnl
\gvac{3} \gob{0}{\F(Y\ot X)} \gob{4}{{}^{\ast}M.}  
\gend
$$
To conclude the proof, we show that the unit $\eta \colon \id \to M \circ {}^{\ast}M$ and counit $\varepsilon \colon 
{}^{\ast}M \circ M \to \id$ lift to 
morphisms in the center category $\Z^{w\x ps}_l(\G,\K,\F)$. For the counit this follows from the computation depicted below.
$$
\gbeg{5}{7}
\got{0}{{}^{\ast}M} \got{3}{M} \got{0}{\F(X)} \gnl
\gcl{4}  \glmptb \gnot{\hspace{-0,34cm}\chi_X} \grmptb \gnl
\gvac{1} \gcl{2} \gcn{1}{1}{1}{5}  \gnl
\gvac{2} \gdb \gcl{2} \gnl
\gvac{1} \glmptb \gnot{\hspace{-0,34cm}\chi_X^{-1}} \grmptb \gcl{1} \gnl
\gev \gcl{1} \gev \gnl
\gvac{2} \gob{1}{\F(X)}  
\gend
=
\gbeg{4}{5}
\got{0}{{}^{\ast}M} \got{3}{M} \got{0}{\F(X)} \gnl
\gcl{2}  \glmptb \gnot{\hspace{-0,34cm}\chi_X} \grmptb \gnl
\gvac{1} \glmptb \gnot{\hspace{-0,34cm}\chi_X^{-1}} \grmptb \gnl
\gev \gcl{1} \gnl
\gvac{2} \gob{1}{\F(X)}  
\gend
=
\gbeg{4}{5}
\got{0}{{}^{\ast}M} \got{3}{M} \got{0}{\F(X)} \gnl
\gev \gcl{3} \gnl
\gvac{2} \gob{1}{\F(X)}  
\gend 
$$
An analogous argument shows that the unit also becomes a morphism in the center.
\end{proof}

\section{Bilax functors}  \selabel{Bilax} 

We are interested in functors on bicategories that are both lax and colax but not necessarily pseudofunctors.
Likewise, we are interested in natural transformations that are both lax and colax but not necessarily pseudonatural
transformations, as well as in their modifications. In particular, we introduce the notions of a bilax functor, bilax 
natural transformation and bilax modifications. We formulate them for 2-categories, just to avoid the use of associators and unitors,
but the corresponding definitions for bicategories can be formulated in a straightforward fashion.
To that end, we fix 2-categories $\K$ and $\K'$. In this section we introduce bilax functors, and leave the resting two 
notions for the next section.

\subsection{Bilax functors}




We reiterate that we will often simplify the notation $\circ$ for the horizontal composition of 1- and 2-cells by concatenation.

\begin{defn} 
Let $\F:\K\to\K'$ be a 2-functor. A {\em Yang-Baxter operator} for $\F$ consists of 
a collection of 2-cells $\nu_{g,f}:\F(g)\F(f)\Rightarrow\F(f)\F(g)$ in $\K'$, natural in 1-endocells $f,g$ of $\K$, 
which satisfy
the Yang-Baxter equation \vspace{-0,2cm}
\begin{equation}\eqlabel{YBE F}
\gbeg{4}{5}
\got{1}{\F(h)} \got{2}{\F(g)} \got{1}{\F(f)} \gnl
\glmptb \gnot{\hspace{-0,34cm}\nu_{h,g}} \grmptb \gcn{1}{1}{3}{1} \gnl
\gcl{1} \glmptb \gnot{\hspace{-0,34cm}\nu_{h,f}} \grmptb \gnl
\glmptb \gnot{\hspace{-0,34cm}\nu_{g,f}} \grmptb \gcn{1}{1}{1}{2} \gnl
\gob{1}{\F(f)} \gob{2}{\F(g)} \gob{1}{\F(h)}
\gend
=
\gbeg{4}{5}
\gvac{1} \got{0}{\F(h)} \got{3}{\F(g)} \got{1}{\hspace{-0,34cm}\F(f)} \gnl
\gvac{1} \gcn{1}{1}{0}{1} \glmptb \gnot{\hspace{-0,34cm}\nu_{g,f}} \grmptb \gnl
\gvac{1} \glmptb \gnot{\hspace{-0,34cm}\nu_{h,f}} \grmptb \gcl{1} \gnl
\gvac{1} \gcn{1}{1}{1}{0} \glmptb \gnot{\hspace{-0,34cm}\nu_{h,g}} \grmptb \gnl
\gvac{1} \gob{0}{\F(f)} \gob{3}{\F(g)} \gob{1}{\hspace{-0,34cm}\F(h)}
\gend
\end{equation}
for all 1-endocells $f,g,h$, 
and the following unity-counity law 
\begin{equation}\eqlabel{YB-unity}
\gbeg{2}{5}
\got{1}{} \got{1}{\F(f)} \gnl
\gu{1} \gcl{1} \gnl
\glmptb \gnot{\hspace{-0,34cm}\nu_{1,f}} \grmptb \gnl
\gcl{1} \gcu{1} \gnl
\gob{1}{\F(f)} 
\gend
=\Id_{\F(f)}=
\gbeg{2}{5}
\got{1}{\F(f)}  \gnl
\gcl{1} \gu{1} \gnl
\glmptb \gnot{\hspace{-0,34cm}\nu_{f,1}} \grmptb \gnl
\gcu{1} \gcl{1} \gnl
\gob{3}{\F(f).} 
\gend
\end{equation} 
We call a Yang-Baxter operator for the identity 2-functor $\Id:\K\to\K$ 
{\em a Yang-Baxter operator of $\K$}. 
We reserve the notation $c$ for a Yang-Baxter operator on the identity 2-functor on $\K$. 
\end{defn} 



For 2-categories $\K=Del(\C)$ where $\C$ is a braided monoidal category, 
a class of Yang-Baxter operators $c$ is given by the braiding(s) of $\C$.

\begin{defn} \delabel{bilax}
Assume that $\K$ possesses a Yang-Baxter operator $c$. A {\em bilax functor} is a pair $(\F, \nu):(\K,c)\to\K'$ 
where $\F:\K\to\K'$ is simultaneously 
a lax and a colax functor and $\nu$ is a Yang-Baxter operator of $\F$, 
meaning that apart from the rule \equref{YBE F} two additional groups of rules hold: 
left and right lax distributive laws 
\begin{equation}\eqlabel{lax d.l.}
\gbeg{3}{5}
\got{0}{\F(h)} \got{3}{\F(g)} \got{0}{\F(f)} \gnl
\gmu \gcn{1}{1}{2}{0} \gnl
\gvac{1} \hspace{-0,34cm} \glmptb \gnot{\hspace{-0,34cm}\nu_{hg,f}} \grmptb  \gnl
\gvac{1} \gcn{1}{1}{1}{-1} \gcl{1} \gnl
\gob{1}{\F(f)} \gvac{1} \gob{1}{\F(hg)}
\gend
=
\gbeg{4}{5}
\gvac{1} \got{0}{\F(h)} \gvac{1} \got{1}{\F(g)} \got{2}{\F(f)} \gnl
\gvac{1} \gcn{1}{1}{0}{1} \glmptb \gnot{\hspace{-0,34cm}\nu_{g,f}} \grmptb \gnl
\gvac{1} \glmptb \gnot{\hspace{-0,34cm}\nu_{h,f}} \grmptb \gcl{1} \gnl
\gvac{1} \gcn{1}{1}{1}{-1} \gmu \gnl
\gob{1}{\F(f)} \gvac{1} \gob{2}{\F(hg)}
\gend, 
\quad
\gbeg{2}{5}
\got{1}{}\got{1}{\F(f)} \gnl
\gu{1} \gcl{1} \gnl
\glmptb \gnot{\hspace{-0,34cm}\nu_{1,f}} \grmptb \gnl
\gcl{1} \gcl{1} \gnl
\gob{0}{\F(f)} \gob{3}{\F(id)}
\gend=
\gbeg{2}{5}
\got{1}{\F(f)} \gnl
\gcl{1} \gu{1} \gnl
\gcl{2}  \gcl{2} \gnl
\gob{0}{\F(f)} \gob{3}{\F(id)}
\gend;
\qquad
\gbeg{3}{5}
\got{-1}{\F(h)} \got{4}{\F(g)} \got{-1}{\F(f)} \gnl
\gcn{1}{1}{0}{2} \gmu \gnl
\gvac{1} \hspace{-0,22cm} \glmptb \gnot{\hspace{-0,34cm}\nu_{h,gf}} \grmptb  \gnl
\gvac{1} \gcn{1}{1}{1}{-1} \gcl{1} \gnl
\gob{1}{\F(gf)} \gvac{1} \gob{1}{\F(h)}
\gend
=
\gbeg{5}{5}
\gvac{1} \got{0}{\F(h)} \gvac{1} \got{1}{\F(g)} \got{2}{\F(f)} \gnl
\gvac{1} \glmptb \gnot{\hspace{-0,34cm}\nu_{h,g}} \grmptb \gcn{1}{1}{2}{1} \gnl
\gvac{1} \gcl{1} \glmptb \gnot{\hspace{-0,34cm}\nu_{h,f}} \grmptb \gnl
\gvac{1} \gmu \gcl{1} \gnl
\gvac{1} \gob{1}{\F(gf)} \gob{3}{\F(h)}
\gend, 
\quad
\gbeg{2}{5}
\got{1}{\F(f)} \gnl
\gcl{1} \gu{1} \gnl
\glmptb \gnot{\hspace{-0,34cm}\nu_{f,1}} \grmptb \gnl
\gcl{1} \gcl{1} \gnl
\gob{0}{\F(id)} \gob{3}{\F(f)} 
\gend=
\gbeg{2}{5}
\got{3}{\F(f)} \gnl
\gu{1} \gcl{1} \gnl
\gcl{2}  \gcl{2} \gnl
\gob{0}{\F(id)} \gob{3}{\F(f)} 
\gend
\end{equation}
and left and right colax distributive laws 
\begin{equation}\eqlabel{colax d.l.}
\gbeg{3}{5}
\got{-1}{\F(f)} \got{5}{\F(hg)} \gnl
\gcn{1}{1}{0}{2} \gcn{1}{1}{2}{2} \gnl
\gvac{1} \hspace{-0,34cm} \glmpt \gnot{\hspace{-0,34cm}\nu_{f,hg}} \grmptb \gnl
\gvac{1} \hspace{-0,2cm} \gcmu \gcn{1}{1}{0}{1} \gnl
\gvac{1} \gob{0}{\F(h)} \gvac{1} \gob{1}{\F(g)} \gob{2}{\F(f)}
\gend
=
\gbeg{4}{5}
\gvac{1} \got{0}{\F(f)}\got{4}{\F(hg)} \gnl
\gvac{1} \gcl{1} \gcmu \gnl
\gvac{1} \glmptb \gnot{\hspace{-0,34cm}\nu_{f,h}} \grmptb \gcl{1} \gnl
\gvac{1} \gcn{1}{1}{1}{-1} \glmptb \gnot{\hspace{-0,34cm}\nu_{f,g}} \grmptb \gnl
\gvac{1} \gob{-1}{\F(h)} \gvac{2} \gob{0}{\F(g)} \gob{3}{\F(f)}
\gend,
\quad
\gbeg{2}{5}
\got{0}{\F(f)}\got{3}{\F(id)} \gnl
\gcl{1} \gcl{1} \gnl
\glmptb \gnot{\hspace{-0,34cm}\nu_{f,1}} \grmptb \gnl
\gcu{1} \gcl{1} \gnl
\gob{3}{\F(f)}
\gend=
\gbeg{2}{5}
\got{0}{\F(f)}\got{3}{\F(id)} \gnl
\gcl{1} \gcl{1} \gnl
\gcl{2}  \gcu{1} \gnl
\gob{1}{\F(f)}
\gend;
\qquad
\gbeg{2}{5}
\got{-1}{\F(gf)} \got{6}{\F(h)} \gnl
\gcn{1}{1}{0}{0} \gcn{1}{1}{2}{0} \gnl
\hspace{-0,34cm} \glmpt \gnot{\hspace{-0,34cm}\nu_{gf,h}} \grmptb \gnl
\gcn{1}{1}{1}{-1} \hspace{-0,2cm} \gcmu \gnl
\gob{0}{\F(h)} \gvac{1} \gob{1}{\F(g)} \gob{2}{\F(f)}
\gend
=
\gbeg{5}{5}
\got{3}{\F(gf)}\got{1}{\F(h)} \gnl
\gvac{1} \gcmu \gcl{1} \gnl
\gvac{1} \gcl{1} \glmptb \gnot{\hspace{-0,34cm}\nu_{f,h}} \grmptb \gnl
\gvac{1} \glmptb \gnot{\hspace{-0,34cm}\nu_{g,h}} \grmptb \gcn{1}{1}{1}{3} \gnl
\gvac{1} \gob{0}{\F(h)} \gob{3}{\F(g)} \gob{1}{\F(f)}
\gend,
\quad
\gbeg{2}{5}
\got{0}{\F(id)}\got{3}{\F(f)} \gnl
\gcl{1} \gcl{1} \gnl
\glmptb \gnot{\hspace{-0,34cm}\nu_{1,f}} \grmptb \gnl
\gcl{1} \gcu{1} \gnl
\gob{1}{\F(f)}
\gend=
\gbeg{2}{5}
\got{0}{\F(id)}\got{3}{\F(f)} \gnl
\gcl{1} \gcl{1} \gnl
\gcu{1}  \gcl{2} \gnl
\gob{3}{\F(f)}
\gend,
\end{equation}
and additionally the {\em bilaxity condition}
\begin{equation} \eqlabel{bilax} 
\gbeg{4}{5}
\got{2}{\F(gf)} \got{2}{\F(hk)} \gnl
\gcmu \gcmu \gnl
\gcl{1} \glmptb \gnot{\hspace{-0,34cm}\nu_{f,h}} \grmptb \gcl{1} \gnl
\gmu \gmu \gnl
\gob{2}{\F(gh)} \gob{2}{\F(fk)} \gnl
\gend
=
\gbeg{4}{5}
\got{1}{\F(gf)} \got{3}{\F(hk)} \gnl
\gwmu{3} \gnl
\glmp \gcmptb \gnot{\hspace{-0,78cm}\F(1c1)} \grmp \gnl
\gwcm{3} \gnl
\gob{1}{\F(gh)}\gvac{1}\gob{1}{\F(fk)}
\gend,
\qquad
\gbeg{3}{3}
\gu{1}  \gu{1} \gnl
\gcn{1}{1}{1}{-1} \gcl{1} \gnl
\gob{-1}{\F(id_A)} \gob{5}{\F(id_A)}
\gend=
\gbeg{3}{3}
\gvac{1} \gu{1} \gnl
\gwcm{3} \gnl
\gob{1}{\F(id_A)} \gob{3}{\F(id_A)}
\gend,
\qquad
\gbeg{2}{3}
\got{-1}{\F(id_A)} \got{5}{\F(id_A)} \gnl
\gcn{1}{1}{-1}{1} \gcl{1} \gnl
\gcu{1}  \gcu{1} \gnl
\gend=
\gbeg{3}{3}
\got{1}{\F(id_A)} \got{3}{\F(id_A)} \gnl
\gwmu{3} \gnl
\gvac{1} \gcu{1} \gnl
\gend,
\quad
\gbeg{1}{2}
\gu{1} \gnl
\gcu{1} \gnl
\gob{1}{}
\gend=
\Id_{id_{A}}
\end{equation}
holds for 1-cells $A\stackrel{k}{\to}B\stackrel{h}{\to}B\stackrel{f}{\to}B \stackrel{g}{\to}C$. 
\end{defn}

Observe that the unit laws in \equref{lax d.l.} (or the counit laws in \equref{colax d.l.}) together with the fourth rule in \equref{bilax} 
imply \equref{YB-unity}. We record that the unit laws of \equref{lax d.l.} (or the counit laws of \equref{colax d.l.}) imply 
\begin{equation}\eqlabel{eps-e}
\gbeg{2}{5}
\got{1}{} \got{1}{\F(id)} \gnl
\gu{1} \gcl{1} \gnl
\glmptb \gnot{\hspace{-0,34cm}\nu_{1,1}} \grmptb \gnl
\gcu{1} \gcl{1} \gnl
\gob{1}{} \gob{1}{\F(id)} 
\gend
=
\gbeg{1}{4}
\got{1}{\F(id)} \gnl
\gcu{1} \gnl
\gu{1} \gnl
\gob{1}{\F(id)} 
\gend
=
\gbeg{2}{5}
\got{1}{\F(id)}  \gnl
\gcl{1} \gu{1} \gnl
\glmptb \gnot{\hspace{-0,34cm}\nu_{1,1}} \grmptb \gnl
\gcl{1} \gcu{1} \gnl
\gob{1}{\F(id).} 
\gend
\end{equation}

\medskip

We briefly comment the term ``distributive law'' in equtions \equref{lax d.l.} and \equref{colax d.l.} (we also used it in \equref{phi-colax}). 
This term in the context of lax functors appeared in \cite[Definition 3.1]{FMS}, as the authors say: ``by
analogy to the distributive laws of monads, which have similar axioms''. On the other hand, monads and lax monoidal functors 
on (monoidal) categories can both be 
interpreted as monoids: the former are monoids in endofunctor categories, while the latter are monoids under Day convolution.
In accordance with this suggestive similarity we use the term ``distributive law'' also for pseudonatural transformations on lax/colax/bilax functors.  

\medskip

Let $(\K,c)$ and $(\K',d)$ be 2-categories with their respective Yang-Baxter operators, and assume that a bilax functor 
$(\F,\nu)$ is given acting between them.  If $\nu_{f,g}=d_{\F(f), \F(g)}$ for all composable 1-endocells $f,g$ in $\K$, we will 
say that $\F$ is a bilax functor with a {\em compatible Yang-Baxter operator}, and we will write simply $\F: (\K,c)\to (\K',d)$. 
In the case that the relation between $\nu$ and $d$ is not known, we will write $(\F,\nu): (\K,c)\to \K'$, as in \deref{bilax}. 

Given a bilax functor $(\F, \nu)$ the functors on endo-hom-categories
\begin{equation} \eqlabel{end-functors}
\F_{A,A}:  \K(A,A)\to\K'(\F(A), \F(A))
\end{equation}
are pre-bimonoidal in the sense of \cite[Section 2]{CS}, 
where we rely on the strictification theorem for monoidal categories. If instead of the Yang-Baxter operators one works 
with braidings on the endo-hom categories, then the functors $\F_{A,A}$ are bilax as in \cite[Section 19.9.1]{Agui}. 
The latter inspired the terminology in \deref{bilax}. 
While the analogues of the coherence conditions \equref{lax d.l.} and \equref{colax d.l.} do not appear explicitly in \cite[Section 2]{CS}, we incorporate them in accordance with the discussion in \cite[Section 19.9.1]{Agui} in our definition. Accordingly, 
we recover a ``braided bialgebra'' from \cite[Definition 5.1]{Tak}, as we will show further below. In the situation 
$\F: (\K,c)\to (\K',d)$ ({\em i.e.} that $\nu$ is compatible with $d$), the functors $\F_{A,A}$ are bimonoidal in the sense of 
\cite[Section 2]{CS}. The other way around, we clearly have:


\begin{ex} \exlabel{braided}
Let $(\C,\Phi_\C)$ and $(\D,\Phi_\D)$ be braided monoidal categories. We identify their braidings with Yang-Baxter operators $c$ and $d$ 
on their delooping categories $Del(\C)$ and $Del(\D)$, respectively. Any bilax functor $\F:(Del(\C),c)\to(Del(\C),d)$ with a compatible Yang-Baxter operator is a bimonoidal functor in the sense of \cite[Section 2]{CS}. 
\end{ex}


\begin{ex} \exlabel{triv-braided fun}
Let $(\C,\Phi)$ be a braided monoidal category and $1$ the trivial 2-category (it has a single 0-cell and only 
identity higher cells). Any bilax functor $\F:1\to Del(\C)$ with invertible $\nu$ 
which coincided with $\Phi$ can be identified with a bialgebra in $\C$. To see this, note that $\F$ determines and is determined by a 1-cell in $Del(\C)$, i.e. an object $B$ of $\C$, and a (co-)multiplication and (co-)unit on that 1-cell, which are subject to (co-)associativity and (co-)unitality due to $\F$ being lax and colax. On the other hand, $c$ of the trivial 2-category is trivial: 
it is the identity 2-cell on the identity 1-cell on $*$. Then the equations \equref{bilax} recover bialgebra axioms on $\F(id_*)$. (The rest of axioms of the bilax functor are automatically fulfilled by the braiding, 
and do not contribute any additional information.) 

If moreover $\F$ is a pseudofunctor, then it is trivial: the obtained bialgebra is isomorphic to the monoidal unit of $\C$. 
This can be proved considering the lax unitality structure of $\F$. 
\end{ex}




The following result is straightforwardly proved, see also \cite[Proposition 3.9]{CS}. 

\begin{prop}
Let $\F: (\K,c) \to (\K', d)$ and $\G: (\K', d) \to (\K'', e)$ be compatible bilax functors. 
  Then $\G\F: (\K, c) \to (\K'', e)$ is a compatible bilax functor with a Yang-Baxter operator $\nu_{g,f}:=\nu^\G_{\F(g),\F(f)}$, 
	where $\nu^\G$ is a Yang-Baxter operator of $\G$ and $g,f$ are 1-endocells of $\K$. 
\end{prop}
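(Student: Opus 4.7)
The plan is to equip $\G\F$ with the standard composite lax and colax structures and then verify each axiom of \deref{bilax} for the proposed $\nu$.

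First, I would endow $\G\F$ with the composite lax structure $(\G\F)^2_{g,f} := \G(\F^2_{g,f}) \circ \G^2_{\F(g),\F(f)}$ and $(\G\F)^0_A := \G(\F^0_A) \circ \G^0_{\F(A)}$, together with their vertical duals for the colax part. Compatibility of $\G\F$ with $e$ is then immediate: because $\G$ is compatible with $e$, one has $\nu^\G_{\F(g),\F(f)} = e_{\G\F(g),\G\F(f)}$, so by definition $\nu_{g,f} = e_{\G\F(g),\G\F(f)}$. It therefore remains to verify that $\G\F$ together with this $\nu$ is a bilax functor at all.

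Second, I would dispatch the routine axioms. The Yang-Baxter equation \equref{YBE F} reduces immediately to the Yang-Baxter equation for $\nu^\G$ evaluated at $\F(h),\F(g),\F(f)$, which holds since $\nu^\G$ is a Yang-Baxter operator for $\G$. The unity-counity law \equref{YB-unity} follows from the corresponding property of $\nu^\G$ combined with a naturality argument on $\F^0$ and its colax dual, or equivalently from the compatibility of $\nu$ with $e$ via \equref{eps-e}. The left and right lax distributive laws \equref{lax d.l.} for $\G\F$ are verified by first using naturality of $\nu^\G$ to slide $\G(\F^2_{h,g})$ past it, then applying the corresponding distributive law for $\G$ and unpacking $(\G\F)^2$; the colax distributive laws \equref{colax d.l.} follow by the dual argument, and the unit and counit clauses in both cases are inherited from those of $\F$ and $\G$ in the same manner.

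The main obstacle is the bilaxity condition \equref{bilax}, which intertwines the lax and colax structures through the Yang-Baxter operator. For the first equation, my plan is to expand both $(\G\F)^2$ and $(\G\F)_2$ into their composite form on the left-hand side and then apply the bilaxity of $\G$ at the four 1-cells $\F(g),\F(f),\F(h),\F(k)$. This produces $\G$-colax and $\G$-lax layers sandwiching the 2-cell $\G(\Id_{\F(g)}\, d_{\F(f),\F(h)}\, \Id_{\F(k)})$. Because $\F$ is compatible with $d$, this interior equals $\G(\nu^\F_{f,h})$ flanked by identities, and the bilaxity of $\F$ itself rewrites $\F^2 \F^2 \circ (\Id\, \nu^\F_{f,h}\, \Id) \circ \F_2 \F_2$ as $\F_2 \F_2 \circ \F(\Id\, c\, \Id) \circ \F^2 \F^2$. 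Applying $\G$ to this identity of 2-cells and regrouping the resulting composites of $\G^2, \G_2, \G(\F^2), \G(\F_2)$ along the defining equations for $(\G\F)^2$ and $(\G\F)_2$ yields the required factor $\G\F(\Id\, c\, \Id)$ between $(\G\F)^2 (\G\F)^2$ and $(\G\F)_2 (\G\F)_2$, as required. The three remaining unit-counit clauses of \equref{bilax} are checked by direct inspection of the composite unit $\G(\F^0)\circ \G^0$ and its colax dual, mirroring the structure of \cite[Proposition 3.9]{CS}.
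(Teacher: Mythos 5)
Your proposal is correct and supplies exactly the routine verification that the paper leaves out: the paper gives no proof, merely declaring the result ``straightforwardly proved'' with a pointer to \cite[Proposition 3.9]{CS}, and your strategy (composite (co)lax structures, naturality of $\nu^{\G}$ to slide $\G(\F^2)$ and $\G(\F_2)$ across it, the distributive laws and bilaxity of $\G$ at $\F(g),\F(f),\F(h),\F(k)$, compatibility $d_{\F(f),\F(h)}=\nu^{\F}_{f,h}$, and then the bilaxity of $\F$ under $\G$) is precisely that standard argument. The only blemish is notational: on the right-hand side of the bilaxity identity for $\F$ there is a single wide multiplication $\F^2_{gf,hk}$ and a single wide comultiplication, not the doubled ``$\F^2\F^2$'' and ``$\F_2\F_2$'' you wrote, but this does not affect the argument.
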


\subsection{Bimonads}

 The theory of monads and comonads in the context of 2-categories was introduced by Street in \cite{St1}. Recall that a monad in $\K$ , is 
a 1-endo-cell $t\in \K(A,A)$ endowed with 2-cells $\mu: tt \to t$ and $\eta: id_A \to t$ subject to associativity and unitality conditions. 
Dually, that is, swapping the direction of the structure 2-cells, one obtains the notion of a comonad in $\K$: a 1-endo-cell endowed with a coassociative and counital comultiplication. In order to differentiate notations for (co)lax functors and (co)monad structures,  
we will represent 
the multiplication and unit of a monad as well as the comultiplication and counit of a comonad by:
$$
\gbeg{2}{3}
\gmuf{1} \gnl
\gcn{1}{1}{2}{2} \gnl
\gend,
\hspace{2cm}
\gbeg{1}{3}
\guf{1} \gnl
\gcl{1} \gnl
\gend,
\hspace{2cm}
\gbeg{2}{3}
\gcn{1}{1}{2}{2} \gnl
\gcmuf{1} \gnl
\gend,
\hspace{2cm}
\gbeg{1}{3}
\gcl{1} \gnl
\gcuf{1}  \gnl
\gend.
$$
One shows in a straightforward manner that lax functors preserve monads and colax functors preserve comonads. 
Specifically, for a monad $t$ in $\K$ and a lax functor $\F:\K\to\K'$, and a comonad $d$ in $\K$ and a colax functor $\G$
we have the following monad and comonad structures:
\begin{equation} \eqlabel{dot-structures}
\gbeg{3}{4}
\got{1}{\F(t)} \gvac{1} \got{1}{\F(t)} \gnl
\gwmuf{3} \gnl
\gvac{1} \gcl{1} \gnl
\gob{3}{\F(t)}
\gend
=
\gbeg{4}{5}
\got{1}{\F(t)} \gvac{1} \got{1}{\F(t)} \gnl
\gwmu{3} \gnl
\glmp \gnot{\F(\nabla)} \gcmptb \grmp \gnl
\gvac{1} \gcl{1} \gnl
\gob{3}{\F(t)}
\gend,
\hspace{0,6cm}
\gbeg{1}{3}
\guf{1} \gnl
\gcl{1} \gnl
\gob{1}{\F(t)} \gnl
\gend =
\gbeg{4}{3}
\gvac{1} \gu{1} \gnl
\glmp \gnot{\F(\eta)} \gcmptb \grmp \gnl
\gvac{1}\gob{1}{\F(t)}
\gend,
\hspace{0,5cm}
\gbeg{3}{4}
\got{3}{\F(d)} \gnl
\gvac{1} \gcl{1} \gnl
\gwcmf{3} \gnl
\gob{1}{\F(d)}\gvac{1}\gob{1}{\F(d)}
\gend
=
\gbeg{4}{5}
\got{3}{\F(d)} \gnl
\gvac{1} \gcl{1} \gnl
\glmp \gnot{\F(\Delta)} \gcmptb \grmp \gnl
\gwcm{3} \gnl
\gob{1}{\F(d)}\gvac{1}\gob{1}{\F(d)}
\gend,
\hspace{0,5cm}
\gbeg{1}{3}
\got{1}{\F(d)} \gnl
\gcl{1} \gnl
\gcuf{1} \gnl
\gend =
\gbeg{4}{3}
\gvac{1}\got{1}{\F(d)} \gnl
\glmp \gnot{\F(\Epsilon)} \gcmptb \grmp \gnl
\gvac{1} \gcu{1} \gnl
\gend.
\end{equation}

We are going to introduce bimonads in 2-categories with respect to Yang-Baxter operators. 
Observe that their 1-categorical analogue is different than the bimonads of \cite{Wisb} and \cite{Moe} in ordinary categories, 
but they are a particular instance of $\tau$-bimonads and bimonads in 2-categories from \cite{F2}. Whereas bimonads in \cite{Moe} 
are opmonoidal monads on monoidal categories, bimonads in \cite{Wisb} are monads and comonads on a not necessarily monoidal category 
with compatibility conditions that involve a distributive law $\lambda$. The bimonads in \cite[Definition 7.1]{F2} generalize the latter 
to 2-categories and are equipped with an analogous 2-cell {\em i.e.} distributive law $\lambda$. The $\tau$-bimonads that we also introduced in \cite{F2}, are a particular case of bimonads, where the 2-cell $\lambda$ is given in terms of a 2-cell $\tau$ which is a distributive law both on the left and on the right, both with respect to monads and comonads. 
In the special case of a Yang-Baxter operator coming from a braiding one gets examples \`a la \cite{Wisb}.


\begin{defn} \delabel{c-bimnd}
Let $(\K,c)$ be a 2-category with a Yang-Baxter operator $c$ and suppose that $b\in\K$ is a 1-endo-cell endowed with a monad and comonad structure. 
We call $b$ a {\em $c$-bimonad} if $c_{b,b}$ is a distributive law both on the left and on the right, both with respect to monads and comonads, 
and the following compatibilities hold:
\begin{equation} \eqlabel{c-bimonad} 
\gbeg{4}{7}
\got{2}{b} \got{2}{b} \gnl
\gcn{1}{1}{2}{2} \gvac{1} \gcn{1}{1}{2}{2} \gnl
\gcmuf \gnl \gvac{2} \gcmuf \gnl \gnl%
\gcl{1} \glmptb \gnot{\hspace{-0,34cm}c_{b,b}} \grmptb \gcl{1} \gnl
\gmuf \gnl \gmuf \gnl \gnl
\gcn{1}{1}{2}{2} \gvac{1}\gcn{1}{1}{2}{2} \gnl
\gob{2}{b} \gob{2}{b} \gnl
\gend=
\gbeg{3}{5}
\got{1}{b} \got{3}{b} \gnl
\gwmuf{3} \gnl
\gvac{1} \gcl{1} \gnl
\gwcmf{3} \gnl
\gob{1}{b}\gvac{1}\gob{1}{b}
\gend,\quad
\gbeg{2}{3}
\got{1}{b} \got{1}{b} \gnl
\gcl{1} \gcl{1} \gnl
\gcuf{1}  \gcuf{1} \gnl
\gend=
\gbeg{2}{3}
\got{1}{b} \got{1}{b} \gnl
\gmuf \gnl \gnl
\gvac{1} \hspace{-0,2cm} \gcuf{1} \gnl
\gob{1}{}
\gend, \quad
\gbeg{2}{3}
\guf{1}  \guf{1} \gnl
\gcl{1} \gcl{1} \gnl
\gob{1}{b} \gob{1}{b}
\gend=
\gbeg{2}{4}
\guf{1} \gnl
\gcl{1} \gnl
\hspace{-0,34cm} \gcmuf \gnl \gnl
\gob{1}{b} \gob{1}{b}
\gend\hspace{-0.3cm},\quad
\gbeg{1}{2}
\guf{1} \gnl
\gcuf{1} \gnl
\gob{1}{}
\gend=
\Id_{\Id_{A}}.
\end{equation}
\end{defn}

  The following observation is inspired by Benabo\'u:

\begin{lma} \lelabel{blx from triv}
There exists a bijection between $c$-bimonads in $(\K,c)$ and compatible bilax functors $1 \to (\K,c)$. 
It is given by mapping any $c$-bimonad $b: A\to A$ in $(\K,c)$ to the bilax functor $\T : 1 \to \K$  with $\T(id_{*})= b$, 
whose bilax structure 2-cells agree with the respective 2-cells of the $c$-bimonad $b$. 
\end{lma}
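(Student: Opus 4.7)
The strategy is to unpack the definition of a compatible bilax functor $\T\colon 1\to (\K,c)$ and verify that the resulting structure on the single 1-cell $b:=\T(id_{*})$ is precisely a $c$-bimonad, and then conversely that every $c$-bimonad assembles into such a bilax functor. Both passages will be inverse to each other on the nose, so the bijection is immediate once the correspondence is established.

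First, I would read off the data. The trivial 2-category $1$ has a unique object $*$, a unique 1-cell $id_{*}$, and only identity 2-cells. Hence a bilax functor $\T\colon 1\to\K$ is determined by an object $A:=\T(*)$ together with a 1-endocell $b:=\T(id_{*})\colon A\to A$. The lax structure produces 2-cells
$\T^{2}\colon b\circ b\Rightarrow \T(id_{*}\circ id_{*})=b$ and $\T^{0}\colon id_{A}\Rightarrow b$, which I rename $\mu$ and $\eta$; the colax structure dually produces $\Delta\colon b\Rightarrow b\circ b$ and $\varepsilon\colon b\Rightarrow id_{A}$. Finally, the Yang--Baxter datum is a single 2-cell $\nu_{id_{*},id_{*}}\colon b\circ b\Rightarrow b\circ b$, which by the compatibility hypothesis equals $c_{b,b}$.

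Next, I would match the axioms. Associativity and unitality of the lax functor $\T$ restricted to the unique composable pair of 1-cells $id_{*}$ are exactly the associativity and unitality of $(\mu,\eta)$, making $b$ a monad; dually the colax structure makes $b$ a comonad. The Yang--Baxter equation \equref{YBE F} and the unity--counity law \equref{YB-unity} are automatic from the assumption $\nu=c$ and the general properties of a Yang--Baxter operator. The four cases of \equref{lax d.l.} collapse, when all 1-cells are taken to be $id_{*}$, into the assertions that $c_{b,b}$ is a distributive law on the left and on the right with respect to $(\mu,\eta)$; dually \equref{colax d.l.} yields that $c_{b,b}$ is a distributive law on the left and on the right with respect to $(\Delta,\varepsilon)$. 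The four coherence diagrams \equref{bilax}, specialised to $f=g=h=k=id_{*}$, give respectively the pentagonal compatibility of $\Delta$ with $\mu$ through $c_{b,b}$, the compatibilities $\Delta\eta=\eta\otimes\eta$ and $\varepsilon\mu=\varepsilon\otimes\varepsilon$, and $\varepsilon\eta=\id$. Comparing with \equref{c-bimonad} shows this is exactly the definition of a $c$-bimonad.

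Finally, the inverse assignment is obtained by reversing the reading: given a $c$-bimonad $b\colon A\to A$, declare $\T(*)=A$, $\T(id_{*})=b$, and set $\T^{2}=\mu$, $\T^{0}=\eta$, the colax structure equal to $(\Delta,\varepsilon)$, and $\nu_{id_{*},id_{*}}=c_{b,b}$; the verification of the bilax axioms is the same translation carried out in the opposite direction. The two assignments are visibly mutually inverse because both the bilax functor $\T$ and the $c$-bimonad $b$ are determined by the same underlying 2-cell data. The step requiring the most bookkeeping is matching the four variants of the distributive law axioms \equref{lax d.l.}--\equref{colax d.l.} with the ``$c_{b,b}$ is a distributive law on the left and on the right, both with respect to monads and comonads'' clause of \deref{c-bimnd}, but no genuine difficulty arises, since each variant specialises to exactly one of the four required distributivity identities.
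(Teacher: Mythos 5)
Your proposal is correct and follows essentially the same route as the paper's proof: the paper likewise invokes Bénabou's observation that a lax functor $1\to\K$ is a monad (dually a comonad for the colax part), matches \equref{lax d.l.} and \equref{colax d.l.} with the four distributive laws of $c_{b,b}$, and matches \equref{bilax} with \equref{c-bimonad}. The only cosmetic difference is that you verify \equref{YB-unity} by appeal to $\nu=c$, whereas the paper derives it from the unit laws of \equref{lax d.l.} together with the fourth rule of \equref{bilax}; either way it holds.
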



\begin{proof}
As observed by Benabo\'u, a lax functor from $1$ to $\K$ defines and is defined by a monad in $\K$. 
Likewise, the colaxity of $\T : 1 \to \K$ is equivalent to $\T(id_*)$ being a comonad. The equations	\equref{lax d.l.} and 
\equref{colax d.l.} correspond to the four distributive laws of $c_{b,b}$ with $b=\T(id_*)$ and the bilaxity conditions given in \equref{bilax} 
correspond to conditions \equref{c-bimonad}. 
Such a bilax functor is also a $\tau$-bimonad in the sense of \cite[Definition 7.1]{F2} (it is a monad and a comonad with 
a monad-morphic and a comonad-morphic distributive law on both sides, in the sense of \equref{lax d.l.} and \equref{colax d.l.}). 
\qed\end{proof}

Similarly, we clearly have:

\begin{lma} \lelabel{T(id)}
Any bilax functor $(\T,\nu) : 1 \to \K$ determines a $\nu$-bimonad $\T(id_{*})= b$. 
\end{lma}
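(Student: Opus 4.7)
The plan is to unpack the bilax structure of $(\T,\nu)$ and read off a $\nu$-bimonad structure on $b \coloneqq \T(id_*)$, reversing the construction carried out in \lref{blx from triv}. Since $1$ has a single $0$-cell $*$ and only identity higher cells, the functor $\T$ is determined by its value $A \coloneqq \T(*)$, the $1$-endocell $b = \T(id_*)$, and its bilax coherence $2$-cells. The Yang--Baxter operator $\nu$ reduces to the single $2$-cell $\nu_{id_*,id_*} \colon \T(id_*)\,\T(id_*) \Rightarrow \T(id_*)\,\T(id_*)$, i.e.\ a $2$-cell $c_{b,b} \colon bb \Rightarrow bb$.

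First I would extract the (co)monad structure. As observed by Benabo\'u (and reused in the proof of \lref{blx from triv}), the lax functor structure $(\T^2,\T^0)$ on the single non-identity composition $id_* \circ id_* = id_*$ yields a multiplication $\mu \colon bb \to b$ and a unit $\eta \colon id_A \to b$; the lax associativity and unitality axioms of $\T$ are precisely the monad axioms for $(b,\mu,\eta)$. Dually, the colax structure $(\T_2,\T_0)$ produces a comultiplication $\Delta \colon b \to bb$ and counit $\varepsilon \colon b \to id_A$ satisfying coassociativity and counitality, i.e.\ making $b$ a comonad.

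Next I would verify that $\nu_{id_*,id_*}$ is a distributive law on each side with respect to both the monad and the comonad. This is an immediate translation: the four equations in \equref{lax d.l.} become, for the unique $1$-endocell, precisely the two monad-distributivity axioms (compatibility of $c_{b,b}$ with $\mu$ and $\eta$ on the left and on the right), and the four equations in \equref{colax d.l.} become the corresponding comonad-distributivity axioms for $c_{b,b}$ with $\Delta$ and $\varepsilon$. The Yang--Baxter equation \equref{YBE F} is automatic, and the unity-counity law \equref{YB-unity} is implied by these unit/counit conditions as noted after \deref{bilax}.

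Finally, the bilaxity condition \equref{bilax} specialised to $f=g=h=k=id_*$ yields exactly the four compatibility diagrams of \equref{c-bimonad}: the first equation gives the pentagon relating $\Delta\mu$ with $(\mu\otimes\mu)(1\,c_{b,b}\,1)(\Delta\otimes\Delta)$, and the remaining three equations give the compatibilities between unit/counit and multiplication/comultiplication, together with $\varepsilon\eta = \id$. Thus $(b,\mu,\eta,\Delta,\varepsilon)$ is a $\nu$-bimonad in the sense of \deref{c-bimnd}. No step presents a genuine obstacle; the only point worth care is bookkeeping the direction of the string-diagram conventions so that the lax/colax $2$-cells of $\T$ are identified with the (co)multiplication rather than with their opposites, but this matches the convention already fixed in \equref{dot-structures}.
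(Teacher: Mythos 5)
Your proposal is correct and follows essentially the same route as the paper: the paper treats this lemma as an immediate variant of \lref{blx from triv}, whose proof likewise invokes B\'enabou's observation for the (co)monad structure, reads the four distributive laws of $\nu_{id_*,id_*}$ off \equref{lax d.l.} and \equref{colax d.l.}, and matches \equref{bilax} with \equref{c-bimonad}. The only cosmetic quibble is that the Yang--Baxter equation \equref{YBE F} for $\nu_{id_*,id_*}$ is not ``automatic'' but is part of the hypothesis that $\nu$ is a Yang--Baxter operator for $\T$; this does not affect the argument.
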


\begin{prop} \prlabel{preserve bimnd}
Let $(\F,\nu):(\K,c)\to\K'$ be a bilax functor and $b\in\K$ a $c$-bimonad. Then $\F(b)$ is a $\nu$-bimonad in $\K'$. 
That is, it satisfies the last three axioms of \equref{c-bimonad} and a variant of the first axiom shown below:
\begin{equation} \eqlabel{tau-bimonad}
\gbeg{4}{7}
\got{2}{\F(b)} \got{2}{\F(b)} \gnl
\gcn{1}{1}{2}{2} \gvac{1} \gcn{1}{1}{2}{2} \gnl
\gcmuf \gnl \gvac{2} \gcmuf \gnl \gnl%
\gcl{1} \glmptb \gnot{\hspace{-0,34cm}\nu_{b,b}} \grmptb \gcl{1} \gnl
\gmuf \gnl \gmuf \gnl \gnl
\gcn{1}{1}{2}{2} \gvac{1}\gcn{1}{1}{2}{2} \gnl
\gob{2}{\F(b)} \gob{2}{\F(b)} \gnl
\gend=
\gbeg{3}{5}
\got{1}{\F(b)} \got{3}{\F(b)} \gnl
\gwmuf{3} \gnl
\gvac{1} \gcl{1} \gnl
\gwcmf{3} \gnl
\gob{1}{\F(b)}\gvac{1}\gob{1}{\F(b).}
\gend
\end{equation}
Moreover, $\F(b)$ is a $\tau$-bimonad in the sense of \cite{F2}. 
\end{prop}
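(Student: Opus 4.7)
\noindent\emph{Proof plan.} The plan is to first install a monad and a comonad structure on $\F(b)$ from the lax and colax parts of $\F$ via \equref{dot-structures}, and then to verify the four axioms of a $\nu$-bimonad, namely \equref{tau-bimonad} together with the last three equations of \equref{c-bimonad} (with $c$ replaced by $\nu$). The ``moreover'' clause about $\tau$-bimonads is then read off from the distributivity properties of $\nu_{b,b}$.

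The three axioms of \equref{c-bimonad} that only involve the unit and the counit are essentially formal. In each case one rewrites both sides using the formulas $\mu_{\F(b)} = \F(\nabla)\circ\F^2$, $\eta_{\F(b)} = \F(\eta)\circ\F^0$ and their duals, uses 2-functoriality of $\F$ to concatenate the inner $\F(\cdot)$'s, invokes the corresponding axiom for the $c$-bimonad $b$, and finishes with the relevant equation among the last three of \equref{bilax} together with naturality of the coherence 2-cells of $\F$ with respect to $\Epsilon$ and $\eta$. These checks do not see $\nu$ in any essential way.

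The main verification is the bialgebra-type axiom \equref{tau-bimonad}, which I would handle by specializing the first equation of \equref{bilax} to $g=f=h=k=b$. Expanding the left-hand side of \equref{tau-bimonad} with the explicit formulas for $\mu_{\F(b)}$ and $\Delta_{\F(b)}$, its middle portion becomes precisely the left-hand side of that specialized bilaxity equation, viewed as a 2-cell $\F(bb)\F(bb)\to\F(bb)\F(bb)$. Replacing it by the right-hand side of \equref{bilax} and then using naturality of the lax and colax coherence cells $\F^2$ to absorb the outer $\F(\Delta)\,\F(\Delta)$ on the bottom and $\F(\nabla)\,\F(\nabla)$ on the top, the whole composite reduces to $\F^2\circ \F(\Psi)\circ\F^2$ (with $\F^2$ lax on the right and colax on the left), where $\Psi$ is the 2-cell $(\nabla\, \nabla)\circ(1\, c_{b,b}\, 1)\circ(\Delta\, \Delta)$ in $\K$. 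The first axiom of \equref{c-bimonad} for $b$ identifies $\Psi$ with $\Delta\circ\nabla$, and 2-functoriality of $\F$ splits this back as $\F(\Delta)\circ\F(\nabla)$, producing the right-hand side of \equref{tau-bimonad}.

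Finally, for the $\tau$-bimonad claim, it suffices to exhibit $\nu_{b,b}$ as a two-sided distributive law of the monad and of the comonad structures of $\F(b)$ over themselves. Each of the four distributivities follows from one of the equations \equref{lax d.l.} or \equref{colax d.l.} specialized at $b$, combined with naturality of $\nu$ applied to the 2-cells $\F(\nabla),\F(\eta),\F(\Delta)$ and $\F(\Epsilon)$. The main obstacle of the whole argument will be bookkeeping: consistently tracking when naturality of the coherence cells of $\F$ is used versus when a YB-compatibility of $\nu$ is invoked, particularly in the main bialgebra axiom where the two phenomena are intertwined.
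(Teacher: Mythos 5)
Your proposal is correct and follows essentially the same route as the paper's (much terser) proof: expand $\mu_{\F(b)},\Delta_{\F(b)}$ via \equref{dot-structures}, recognize the middle of \equref{tau-bimonad} as the first bilaxity condition \equref{bilax} at $g=f=h=k=b$, push $\F(\nabla)$ and $\F(\Delta)$ through the (co)lax coherence cells by naturality so that the $c$-bimonad axiom for $b$ can be applied inside a single $\F(-)$, and deduce the distributive laws from \equref{lax d.l.}, \equref{colax d.l.} and naturality of $\nu$. The only nitpick is terminological: what you call ``2-functoriality'' is really the functoriality of the local hom-functors $\F_{A,A}$ (preservation of vertical composition), since a bilax functor need not preserve horizontal composition strictly.
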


\begin{proof}
The claim 
follows by the naturality of the (co)lax structure of $\F$ with respect to the 2-cells from the (co)monad structures of $b$. 
(For the three compatibilities of the (co)unit structures for $\F(b)$, alternatively, apply $\F(\eta_b)$ and $\F(\Epsilon_b)$ at suitable places in the last three equations in \equref{bilax} and use the naturality of the (co)lax structure of $\F$.) 
The last statement follows from \equref{lax d.l.} and \equref{colax d.l.}. 
\qed\end{proof}

\begin{ex} 
We have that $\F(id_A)$ for all $A\in\Ob\K$ are $\nu$-bimonads in $\K'$. 
\end{ex}

\medskip

Given that the notion of a Yang-Baxter operator is more general than that of 
a braiding, a $\nu$-bimonad generalizes the notion of a bialgebra. Moreover, we have that the notion of a bilax functor $\F:\K\to\K'$ 
recovers that of a ``braided bialgebra'' from \cite[Definition 5.1]{Tak}, in the case $\K=Del(\C)$, where $\C$ is a monoidal category with a single object. 

\begin{ex} \exlabel{my bimonads}
Let $(\K,c)$ and $(\K',d)$ be two 2-categories with their respective Yang-Baxter operators, and let $b:A_0\to A_0$ be a $d$-bimonad in $\K'$.
We define $\F_b(A)=A_0, \F_b(x)=b, \F_b(\alpha)=\Id_b$ for all objects $A$, 1-cells $x$ and 2-cells $\alpha$ in $\K$. 
Then $\F_b$ is a bilax functor with $\nu_{f,g}=d_{b,b}$ for all 1-endocells $f,g$ in $\K$. 
(Alternatively, instead of the Yang-Baxter operator $d$ in $\K'$ one could require 
a Yang-Baxter operator $\nu$ on $\F_b$, take a $\nu$-bimonad $b$ and obtain a bilax functor ($\F_b,\nu)$.) 
\end{ex}

The following claim is directly proved:

\begin{lma}
For a comonad $d:A\to A$ and a colax transformation between two colax functors $\chi:\F\Rightarrow\G$ it holds:
$$
\gbeg{3}{6}
\got{1}{\chi_A}\got{2}{\F(d)} \gnl
\gcl{1} \gcn{1}{1}{2}{2} \gnl
\gcl{1} \gcmuf \gnl \gnl
\glmptb \gnot{\hspace{-0,34cm}\chi_d} \grmptb \gcl{1} \gnl
\gcn{1}{1}{1}{-1} \glmptb \gnot{\hspace{-0,34cm}\chi_d} \grmptb \gnl
\gob{-1}{\G(d)} \gvac{2} \gob{1}{\G(d)} \gob{1}{\chi_A}
\gend=
\gbeg{3}{5}
\got{1}{\chi_A}\got{2}{\F(d)} \gnl
\gcn{1}{1}{2}{2} \gcn{1}{1}{2}{2} \gnl
\gvac{1} \hspace{-0,34cm} \glmpt \gnot{\hspace{-0,34cm}\chi_{d}} \grmptb \gnl
\gvac{1} \hspace{-0,2cm} \gcmuf \gnl \gvac{2} \gcn{1}{1}{0}{1} \gnl
\gvac{1} \gob{0}{\G(d)} \gvac{1} \gob{1}{\G(d)} \gob{1}{\chi_A}
\gend;
\qquad\quad
\gbeg{3}{5}
\got{1}{\chi_A}\got{2}{\F(d)}\gnl
\gcl{1} \gcl{1} \gnl
\glmptb \gnot{\hspace{-0,34cm}\chi_{d}} \grmptb \gnl
\gcuf{1} \gcl{1} \gnl
\gob{3}{\chi_A}
\gend=
\gbeg{3}{5}
\got{1}{\chi_A}\got{2}{\F(d)}\gnl
\gcl{1} \gcl{1} \gnl
\gcl{2}  \gcuf{1} \gnl \gnl
\gob{1}{\chi_A}
\gend.
$$
\end{lma}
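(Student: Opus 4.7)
The key observation is that the displayed comonad 2-cells on $\F(d)$ and $\G(d)$ are precisely the ones induced by the colax structures of $\F$ and $\G$ on the comonad $d$, as described in \equref{dot-structures}. So concretely the two equations amount to saying that the component 2-cell $\chi_d$ of the colax natural transformation is compatible (as a 2-cell) with the comonad structures that the colax functors $\F$ and $\G$ transport from the comonad $d$. My plan is to reduce each of the two equalities to the defining axioms of a colax natural transformation between colax functors collected in \equref{phi-colax}.

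For the first (comultiplication) equation, I would split the induced comultiplication on $\F(d)$ into its two building blocks $\F(\Delta)$ and $\F^2_{d,d}$, and similarly for $\G(d)$. Starting from the left-hand side, the first step is to use the naturality of $\chi$ with respect to the 2-cell $\Delta:d\Rightarrow dd$ (the third diagram of \equref{phi-colax} with $\alpha = \Delta$) to exchange $\chi_A\circ\F(\Delta)$ with $\G(\Delta)\circ\chi_A$ at the price of replacing $\chi_d\circ\chi_d$ by $\chi_{dd}$ composed with the appropriate comultiplicativity data. The second step is to apply the colax multiplicativity law of $\chi$ (the first diagram of \equref{phi-calax}, with $g=f=d$), which turns $\chi_{dd}$ together with $\F^2_{d,d}$ into $\G^2_{d,d}$ composed with $\chi_{dd}$ suitably. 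Assembling these rewrites one recovers the right-hand side.

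For the second (counit) equation, the same strategy applies using the 2-cell $\Epsilon:d\Rightarrow \id_A$. Start on the right-hand side and first insert $\chi_{\id_A}$ by means of the colax unitality law of $\chi$ (second diagram of \equref{phi-colax}), converting $\chi_A\circ \F^0$ into $\G^0\circ\chi_A$ composed with $\chi_{\id_A}$. Then use the naturality of $\chi$ on $\Epsilon:d\Rightarrow\id_A$ to move $\chi_{\id_A}\circ(\chi_A\circ\F(\Epsilon))$ to $(\G(\Epsilon)\circ\chi_A)\circ\chi_d$, which produces the left-hand side after reinterpreting $\G^0\circ\G(\Epsilon)$ as the counit of the induced comonad $\G(d)$.

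The content of the proof is therefore purely diagrammatic, and no obstruction is expected beyond the bookkeeping of which axiom is invoked in each intermediate step; in particular the required distributive-law conditions do not enter, since only the comonad structure on a single 1-cell is involved. This explains why the author dismisses the lemma as a direct check.
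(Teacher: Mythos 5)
Your argument is correct and is exactly the direct check the paper has in mind (the paper omits the proof entirely, remarking only that the claim ``is directly proved''): decompose the induced comonad structures on $\F(d)$ and $\G(d)$ via \equref{dot-structures} and combine the colax (co)multiplicativity and unitality of $\chi$ from \equref{phi-colax} with its naturality applied to $\Delta$ and $\Epsilon$. One cosmetic remark: in the comultiplication identity the colax multiplicativity of $\chi$ should be applied before (or interchanged with) the naturality step, since the naturality square involves $\chi_{dd}$, which only appears after that multiplicativity rewrite.
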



Monads and comonads in $\K$ can act on other 1-cells of $\K$. That is, for example a left module over a bimonad $b:A\to A$ is a 1-cell 
$x:A'\to A$ endowed with a left action $\rhd : bx \Rightarrow x$ of $b$ (in \cite[Definition 2.3]{F1} the axioms are expressed in string diagrams). Note that in category theory, what we call a $b$-(co)module is also referred to as a $b$-(co)algebra, see for example \cite{LS}.

%


\begin{prop} \prlabel{(co)mod str}
Let $\F, \G$ be two bilax functors and $b: A\to A$ a $c$-bimonad in $\K$ (indeed, a monad and comonad
satisfying the two last identities in \equref{c-bimonad}).

For a colax transformation $\phi:\F\Rightarrow\G$ of colax functors, \equref{left G(b)-comod} defines a left $\G(b)$-comodule structure 
on $\phi(A)$.

Dually, for a lax transformation $\psi:\F\Rightarrow\G$ of lax functors, \equref{left G(b)-mod} defines a left $\G(b)$-module structure 
on $\psi(A)$.
\begin{center} \hspace{-0,6cm}
\begin{tabular}{p{6.6cm}p{1cm}p{6.6cm}}
\begin{equation} \eqlabel{left G(b)-comod}
\gbeg{2}{4}
\got{3}{\phi_A} \gnl
 \gvac{1} \gcl{1}\gnl
\glcm \gnl
\gob{0}{\G(b)} \gob{3}{\phi_A}
\gend
:=
\gbeg{3}{5}
\gvac{1} \got{1}{\phi_A} \gnl
\gvac{1} \gcl{1} \gu{1} \gnl
\gvac{1} \glmptb \gnot{\hspace{-0,34cm}\phi_{id_A}} \grmptb \gnl
\glmp \gnot{\hspace{-0,34cm}\G(\eta)} \grmptb\gcl{1} \gnl
\gvac{1} \gob{0}{G(b)} \gob{3}{\phi_A}
\gend
\stackrel{nat.}{=}
\gbeg{2}{5}
\got{1}{\phi_A} \gnl
\gcl{1} \guf{1} \gnl
\glmptb \gnot{\hspace{-0,34cm}\phi_b} \grmptb \gnl
\gcl{1} \gcl{1} \gnl
\gob{0}{G(b)} \gob{3}{\phi_A}
\gend
\end{equation} & & 
\begin{equation} \eqlabel{left G(b)-mod} 
\gbeg{2}{4}
\got{0}{\G(b)} \got{3}{\psi_A} \gnl
\glm \gnl
 \gvac{1} \gcl{1}\gnl
\gob{3}{\psi_A}
\gend
:=
\gbeg{3}{5}
\gvac{1} \got{0}{\G(b)} \got{3}{\psi_A} \gnl
\glmp \gnot{\hspace{-0,34cm}\G(\Epsilon)} \grmptb\gcl{1} \gnl
\gvac{1} \glmptb \gnot{\hspace{-0,34cm}\psi_{id_A}} \grmptb \gnl
\gvac{1} \gcl{1} \gcu{1} \gnl
\gob{3}{\psi_A}
\gend
\stackrel{nat.}{=}
\gbeg{2}{5}
\got{0}{\G(b)} \got{3}{\psi_A} \gnl
\gcl{1} \gcl{1} \gnl
\glmptb \gnot{\hspace{-0,34cm}\psi_b} \grmptb \gnl
\gcl{1} \gcuf{1} \gnl
\gob{1}{\psi_A}
\gend
\end{equation}
\end{tabular}
\end{center}
\end{prop}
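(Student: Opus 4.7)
I sketch the argument for the colax transformation $\phi$; the lax case is strictly dual.

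To begin, the two formulas in \equref{left G(b)-comod} agree by combining the naturality of $\phi$ on the 2-cell $\eta_b\colon id_A \to b$ (third identity of \equref{phi-colax}) with the description $\eta_{\F(b)}=\F(\eta_b)\circ \F^0_{lax}$ of the monad unit on $\F(b)$ recorded in \equref{dot-structures}. I therefore work with the compact form $\rho=\phi_b\circ(\id_{\phi_A}\circ\eta_{\F(b)})$, reading the outer composition vertically and the inner horizontally.

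For counitality I expand $\varepsilon_{\G(b)}=\G^0_{colax}\circ\G(\varepsilon_b)$ and compose it (whiskered with $\id_{\phi_A}$) with $\rho$. Naturality of $\phi$ applied to $\varepsilon_b\colon b\to id_A$ slides $\G(\varepsilon_b)$ past $\phi_b$, producing $\F(\varepsilon_b)$ on the $\F$-side preceded by $\phi_{id_A}$. Now $\F(\varepsilon_b)\circ\eta_{\F(b)}=\F(\varepsilon_b\circ\eta_b)\circ\F^0_{lax}=\F^0_{lax}$ by the fourth axiom of \equref{c-bimonad}. What remains is the vertical composite of $\id_{\phi_A}\circ\F^0_{lax}$, followed by $\phi_{id_A}$, followed by $\G^0_{colax}\circ\id_{\phi_A}$; the middle identity of \equref{phi-colax} identifies it with $\id_{\phi_A}\circ(\F^0_{colax}\circ\F^0_{lax})$, which equals $\id_{\phi_A}$ by the last display of \equref{bilax}.

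For coassociativity I need $(\Delta_{\G(b)}\circ\id_{\phi_A})\circ\rho=(\id_{\G(b)}\circ\rho)\circ\rho$. Writing $\Delta_{\G(b)}=\G^2_{colax}\circ\G(\Delta_b)$, naturality of $\phi$ on $\Delta_b$ replaces $\G(\Delta_b)\circ\phi_b$ by $\phi_{bb}\circ(\id_{\phi_A}\circ\F(\Delta_b))$, and the colax multiplicativity of $\phi$ (first identity of \equref{phi-colax}) rewrites $\G^2_{colax}\circ\phi_{bb}$ as two successive applications of $\phi_b$ with $\F^2_{colax}$ inserted on the $\F$-side. The left-hand side thus becomes the composite that first applies $\Delta_{\F(b)}=\F^2_{colax}\circ\F(\Delta_b)$ after $\eta_{\F(b)}$ and then two copies of $\phi_b$. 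Since $\F(b)$ is a $\nu$-bimonad by \prref{preserve bimnd}, the third axiom of \equref{c-bimonad} yields $\Delta_{\F(b)}\circ\eta_{\F(b)}=\eta_{\F(b)}\circ\eta_{\F(b)}$, i.e.\ two independent insertions of the unit; one application of the interchange law then identifies the result with the right-hand side.

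The main obstacle is the careful bookkeeping of whiskerings and the identification of which layer of structure is invoked at each rewrite: the lax half of $\F$ supplies $\eta_{\F(b)}$, the colax half of $\G$ furnishes $\Delta_{\G(b)}$ and $\varepsilon_{\G(b)}$, the colax-transformation axioms of $\phi$ transport $\G$-data across to the $\F$-side, and the last two axioms of \equref{c-bimonad} perform the decisive cancellations. The $\psi$-case is handled by dualizing every step, with the second axiom of \equref{c-bimonad} taking the place of the third to establish associativity of the action.
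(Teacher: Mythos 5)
Your proof is correct and follows essentially the same route as the paper: the same ingredients (naturality of $\phi$, the first and second rules of \equref{phi-colax}, and the last two axioms of \equref{c-bimonad} together with the unit/counit compatibilities of \equref{bilax}) are applied, merely reading the coassociativity computation from the $\Delta_{\G(b)}$-side instead of from the iterated coaction. The only cosmetic difference is that you invoke \prref{preserve bimnd} to obtain $\Delta_{\F(b)}\circ\eta_{\F(b)}=\eta_{\F(b)}\circ\eta_{\F(b)}$, where the paper unwinds this directly via the second rule of \equref{bilax}, the third rule of \equref{c-bimonad} for $b$, and naturality.
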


\begin{proof}
We indicate the steps of the proof for the comodule structure. Starting from
$\gbeg{3}{3}
\gvac{1} \glcm \gnl
\gcn{2}{1}{3}{1}  \gcl{1} \gnl
\gcl{1} \glcm \gnl
\gend$ apply first the second rule in \equref{bilax}, then the first rule in \equref{phi-colax}, naturality of
$\gbeg{2}{1}
\gcmu \gnl
\gend$, naturality of $\chi$, third rule in \equref{c-bimonad} for $b$, and lastly naturality of $\chi$.

For the counitality, starting from
$\gbeg{2}{3}
\glcm \gnl
\gcuf{1} \gcl{1} \gnl
\gend$ apply first the fourth rule in \equref{c-bimonad}, the second rule in \equref{phi-colax}, and lastly the fourth
rule in \equref{bilax}.
\qed\end{proof}

For $\F=\G$ acting on the trivial 2-category $\K=1$, and $\phi$ and $\psi$ being endo-transformations, \prref{(co)mod str} recovers
\cite[Proposition 2.4]{F1}, proved for 2-(co)monads and their distributive laws. The latter is a 2-categorical formulation for a fact 
possibly used in (braided) monoidal categories by different authors, but we are not aware of an exact reference. 


It is important to note that modules/comodules over a $c$-bimonad in $\K$ do not form a monoidal category (unless the Yang-Baxter operator $c$ is a half-braiding).

\subsection{Module comonads, comodule monads and relative bimonad modules} \sslabel{mod}

In \cite{LS} the notion of a wreath was introduced as monad in the free completion 2-category $\EM^M(\K)$ of the 2-category $\Mnd(\K)$ 
of monads in $\K$ under the Eilenberg-Moore construction. Dually, cowreaths are comonads in $\EM^C(\K)$, where the latter is the analogous 
completion of the 2-category $\Comnd(\K)$ of comonads. In \cite{F1} the first author introduced the 2-category $\bEM(\K)$ 
so that there are forgetful 2-functors $\bEM(\K)\to\EM^M(\K)$ and $\bEM(\K)\to\EM^C(\K)$. Moreover, in {\em loc. cit.} biwreaths 
were introduced as bimonads in $\bEM(\K)$. Biwreaths as a notion integrate both 
wreaths and cowreaths as well as their mixed versions: mixed wreaths and mixed cowreaths. In particular, a biwreath also behaves like a 
``(co)module (co)monad'' with respect to monad-morphic or comonad-morphic distributive law in $\K$, where the highlighted notions in the 
2-categorical setting were introduced in \cite{F1}. 

In the present paper, similarly to the above-mentioned idea (see diagrams (67) and (65) of {\em loc. cit.}), but now with respect to Yang-Baxter operators, 
we will consider the notions that we introduce in the definition below. 

For the sake of examples that we will study further below, we record that in \cite{F1} the 2-category $\Bimnd(\K)$ of bimonads in $\K$ (with 
respect to distributive laws) was defined, so that there are inclusion and projection 2-functors $E_B: \Bimnd(\K)\to\bEM(\K)$ and 
$\pi: \bEM(\K)\to\Bimnd(\K)$ which are identities on 0- and 1-cells. In \cite{F2} we have considered a variation of the 2-category $\Bimnd(\K)$ 
by changing the definition of 2-cells. It is this other version of the 2-category that we will be interested in here. 
We will recall it in \exref{bimnd-K}. 

\begin{defn} \delabel{mod-comod}
Let $b:A\to A$ be a $c$-bimonad in a 2-category $(\K,c)$ with a Yang-Baxter operator. Let $d:A\to A$ be a comonad and a right $b$-module, 
and $t:A\to A$ a monad and a right $b$-module.  
We say that $d$ is a (right) {\em module comonad} if the left two equations below hold,  
and that $t$ is a (right) {\em comodule monad} if the right two equations below hold:
$$
\gbeg{4}{6}
\got{2}{d} \got{2}{b} \gnl
\gcn{1}{1}{2}{2} \gvac{1} \gcn{1}{1}{2}{2} \gnl
\gcmuf \gnl \gvac{2} \gcmuf \gnl \gnl%
\gcl{1} \glmptb \gnot{\hspace{-0,34cm}c_{d,b}} \grmptb \gcl{1} \gnl
\grm \grm \gnl
\gob{1}{d} \gob{3}{d} \gnl
\gend
=
\gbeg{3}{5}
\gvac{1} \got{1}{d} \got{1}{b} \gnl
\gvac{1} \grm \gnl
\gvac{1} \gcl{1} \gnl
\gwcmf{3} \gnl
\gob{1}{d}\gvac{1}\gob{1}{d}
\gend,
\quad
\gbeg{2}{3}
\got{1}{d} \got{1}{b} \gnl
\grm \gnl
\gcuf{1} \gnl
\gend
=
\gbeg{3}{3}
\got{1}{d} \got{1}{b} \gnl
\gcl{1} \gcl{1} \gnl
\gcuf{1}  \gcuf{1} \gnl
\gend;
\qquad
\gbeg{4}{6}
\got{1}{t} \got{3}{t} \gnl
\grcm \grcm \gnl
\gcl{1} \glmptb \gnot{\hspace{-0,34cm}c_{b,t}} \grmptb \gcl{1} \gnl
\gmuf \gnl \gmuf \gnl \gnl%
\gcn{1}{1}{2}{2} \gvac{1} \gcn{1}{1}{2}{2} \gnl
\gob{2}{t} \gob{2}{b} \gnl
\gend
=
\gbeg{3}{5}
\got{1}{t} \gvac{1} \got{1}{t} \gnl
\gwmuf{3} \gnl
\gvac{1} \gcl{1} \gnl
\gvac{1} \grcm \gnl
\gvac{1} \gob{1}{t} \gob{1}{b}
\gend,
\quad
\gbeg{2}{3}
\guf{1} \gnl
\grcm \gnl
\gob{1}{t} \gob{1}{b} \gnl
\gend
=
\gbeg{3}{3}
\guf{1}  \guf{1} \gnl
\gcl{1} \gcl{1} \gnl
\gob{1}{t} \gob{1}{b.} \gnl
\gend
$$
\end{defn}

The left hand-side versions of these notions can be clearly deduced. 

\medskip

We continue with some simple yoga of (co)lax and bilax functors.

Consider 1-cells:
$
\bfig
\putmorphism(20,53)(1,0)[``t]{300}1a
\putmorphism(0,30)(1,0)[A`A`]{340}0a
\putmorphism(20,13)(1,0)[``d]{300}1b

\putmorphism(370,30)(1,0)[`B`x]{300}1a

\efig
$  in $\K$, where $t$ is a monad, $d$ is a comonad and $x$ is a right $t$-module (via a 2-cell $\lhd$) and a right $d$ comodule 
(via a 2-cell $\rho$). Recall that, as we already used before, lax functors preserve monads and colax functors preserve comonads. 
Given a lax functor $\F$ and a colax functor $\G$ one has that $\F(x)$ is a right $\F(t)$-module and a right $\G(d)$-comodule 
with structure 2-cells:
\begin{equation} \eqlabel{induced (co)mods}
\gbeg{3}{4}
\got{1}{\F(x)} \gvac{1} \got{1}{\F(t)} \gnl
\gcl{1} \gcn{1}{1}{3}{1} \gnl
\grm \gnl
\gob{1}{\F(x)}
\gend
=
\gbeg{4}{4}
\got{1}{\F(x)} \gvac{1} \got{1}{\F(t)} \gnl
\gwmu{3} \gnl
\glmp \gnot{\F(\lhd)} \gcmptb \grmp \gnl
\gob{3}{\F(x)}
\gend,
\hspace{1,5cm}
\gbeg{3}{4}
\got{1}{\G(x)} \gnl
\grcm \gnl
\gcl{1} \gcn{1}{1}{1}{3} \gnl
\gob{1}{\G(x)}\gvac{1}\gob{1}{\G(d)}
\gend
=
\gbeg{4}{4}
\got{3}{\G(x)} \gnl
\glmp \gnot{\F(\rho)} \gcmptb \grmp \gnl
\gwcm{3} \gnl
\gob{1}{\G(x)}\gvac{1}\gob{1}{\G(d).}
\gend
\end{equation}
The analogous claims hold on left sides.

\begin{prop} \prlabel{bilax preserves comod mod}
Bilax functors $\F: (\K,c)\to (\K',d)$ with compatible Yang-Baxter operators preserve module comonads and comodule monads. 
\end{prop}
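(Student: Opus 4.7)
The plan is to transport the module-comonad structure on an arbitrary comonad $e : A \to A$ (I use $e$ in place of the symbol $d$ from \deref{mod-comod} to avoid the clash with the Yang-Baxter operator $d$ of $\K'$) along $\F$ by assembling four already-available pieces: the comonad structure on $\F(e)$ coming from the colax part of $\F$ as in \equref{dot-structures}, the right $\F(b)$-action on $\F(e)$ coming from the right-hand formula of \equref{induced (co)mods}, the $d$-bimonad structure on $\F(b)$ guaranteed by \prref{preserve bimnd} (using the compatibility $\nu = d$), and the 2-cell $d_{\F(e),\F(b)} = \nu_{e,b}$ playing the role of the Yang-Baxter datum in $\K'$. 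The verification then reduces to checking the two axioms of \deref{mod-comod} in $\K'$ for these transported data; the comodule-monad statement for $\F(t)$ is then handled by the dual argument.

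For the comultiplicative axiom, the approach is to begin with the right-hand side written in $\K'$, expand the two comultiplications and the action using \equref{dot-structures} and \equref{induced (co)mods}, and invoke the first identity of \equref{bilax} with $g = f = e$ and $h = k = b$. This absorbs the four outer colax $\F^{-2}$'s and lax $\F^2$'s together with $\nu_{e,b}$ into a single $\F$ of the internal braid $c_{e,b}$, flanked by one $\F^2$ at the bottom and one $\F^{-2}$ at the top. Naturality of $\F^{-2}$ against $\lhd \otimes \lhd$ and of $\F^2$ against $\Delta_e \otimes \Delta_b$ then pulls every remaining structural 2-cell under $\F$, and the original module-comonad axiom of $e$ over $b$ in $\K$, which now lives inside this single $\F$ via functoriality on 2-cells, collapses the expression to $\F^{-2} \circ \F(\Delta_e \circ \lhd) \circ \F^2$. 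This is exactly the left-hand side after reapplying the definitions in reverse.

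The counital axiom is easier: substituting the definitions reduces it to the identity $\F^{-0} \circ \F^2_{id,id} = \F^{-0} \otimes \F^{-0}$, which is the third identity of \equref{bilax}, combined with the original counital axiom of $e$ over $b$ in $\K$ transported under $\F$. The comodule-monad claim for $\F(t)$ follows by exchanging the roles of lax and colax structures throughout, using the left-hand formula of \equref{induced (co)mods} to transport the coaction and the vertically mirrored use of \equref{bilax}; the second and fourth identities of \equref{bilax} then discharge the unit axiom.

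The main obstacle is purely organizational: orienting the bilax identity \equref{bilax} correctly so that its four $\F^{\pm 2}$'s line up with the boundaries of the expanded right-hand side, and then sequencing the ensuing naturality squares of $\F^{\pm 2}$ so that every building block ends up inside a single $\F$ before the axiom from $\K$ is invoked. Once the compatibility $\nu = d$ is used to identify the ambient Yang-Baxter operator of $\K'$ with the one carried by $\F$, no further data is needed and the remaining manipulation is a routine diagram chase.
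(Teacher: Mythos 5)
Your proposal is correct and follows essentially the same route as the paper's proof: expand the transported structures via \equref{dot-structures} and \equref{induced (co)mods}, apply the first bilaxity identity with $g=f=$ the module comonad and $h=k=b$, use naturality of the (co)lax structure maps and functoriality to gather everything under a single $\F$, and then invoke the axiom from $\K$; the paper merely runs this chain starting from the other side of the equation. The only quibble is the sentence placing ``$\F^2$ at the bottom and $\F^{-2}$ at the top'' of the collapsed middle block --- in \equref{bilax} the lax multiplication sits on top and the colax comultiplication below --- but your subsequent naturality step ($\F^{-2}$ against $\lhd\otimes\lhd$, $\F^2$ against $\Delta_e\otimes\Delta_b$) uses the correct orientation, so this is only a slip of phrasing.
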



\begin{proof}
The arguments for showing that $\F$ preserves comodule monads are analogous to proving that it maps module comonads to module comonads.
Therefore, we will restrict ourselves to the latter. 
Let $b : A \to A$ be a bimonad in $(\K,c)$ and  $x : A \to A$ a $b$-module comonad.
  The bilaxity of $\F$ implies that $\F(b)$ is a $d$-bimonad with an action on the comonad  $\F(x)$. 
  Hence, we only have to show that the first two compatibility conditions in \deref{mod-comod} hold. 
  Using the functoriality of $\F$ we have: 
$$
\gbeg{4}{6}
\got{2}{\F(x)} \got{2}{\F(b)} \gnl
\gcn{1}{1}{2}{2} \gvac{1} \gcn{1}{1}{2}{2} \gnl
\gcmuf \gnl \gvac{2} \gcmuf \gnl \gnl%
\gcl{1} \glmptb \gnot{\hspace{-0,34cm}d_{x,b}} \grmptb \gcl{1} \gnl
\grm \grm \gnl
\gob{1}{\F(x)} \gob{3}{\F(x)} \gnl
\gend
=
\gbeg{4}{7}
\got{1}{\F(x)} \got{5}{\F(b)} \gnl
\glmptb \gnot{\hspace{-0,34cm}\F\w(\Delta_d)} \grmp \glmp \gnot{\hspace{-0,34cm}\F\w(\Delta_b)} \grmptb \gnl
\gwmu{4}  \gnl
\gvac{1} \hspace{-0,34cm} \glmp \gnot{\F(1c1)} \gcmptb \grmp \gnl
\gvac{1} \hspace{-0,22cm} \gwcm{4} \gnl
\gvac{1} \glmptb \gnot{\hspace{-0,38cm}\F(\lhd)} \grmp \glmp \gnot{\hspace{-0,38cm}\F(\lhd)} \grmptb \gnl
\gvac{1} \gob{1}{\F(x)} \gob{5}{\F(x)}
\gend
= 
\gbeg{3}{7}
\got{1}{\F(x)} \got{3}{\F(b)} \gnl
\gwmu{3}  \gnl
\glmp \gcmptb \gnot{\hspace{-0,74cm}\F(\Delta_d\Delta_b)} \grmp \gnl
\glmp \gnot{\F(1c1)} \gcmptb \grmp \gnl
\glmp \gcmptb\gnot{\hspace{-0,78cm}\F(\lhd\lhd)} \grmp \gnl
\gwcm{3} \gnl
\gob{1}{\F(x)} \gob{3}{\F(x)}
\gend
=
\gbeg{3}{6}
\got{1}{\F(x)} \got{3}{\F(b)} \gnl
\gwmu{3}  \gnl
\glmp \gcmptb \gnot{\hspace{-0,74cm}\F(\lhd)} \grmp \gnl
\glmp \gcmptb \gnot{\hspace{-0,74cm}\F(\Delta_d)} \grmp \gnl
\gwcm{3} \gnl
\gob{1}{\F(x)} \gob{3}{\F(x)}
\gend
=
\gbeg{3}{5}
\gvac{1} \got{1}{\F(x)} \got{3}{\F(b)} \gnl
\gvac{1} \gcl{1} \gcn{1}{1}{3}{1} \gnl
\gvac{1} \grm \gnl
\gwcmf{3} \gnl
\gob{1}{\F(x)} \gob{3}{\F(x).}
\gend
$$
The compatibility of the action of $\F(b)$ with the counit of $\F(x)$ is an immediate consequence of $\F$ being lax. 
\qed\end{proof}

\medskip

For the next property we introduce the following notion:

\begin{defn} \delabel{rel Hopf}
Let $b:A\to A$ be a $c$-bimonad in a 2-category $(\K,c)$ with a Yang-Baxter operator and let $t:A\to A$ be a right $b$-comodule monad.
A right $t$-module and a right $b$-comodule $x:A\to B$ is a right {\em relative $t\x b$-module} if the following relation holds:
$$
\gbeg{4}{5}
\got{1}{x} \got{3}{t} \gnl
\grcm \grcm \gnl%
\gcl{1} \glmptb \gnot{\hspace{-0,34cm}c_{b,t}} \grmptb \gcl{1} \gnl
\grm \gmuf \gnl \gnl
\gob{1}{x} \gob{4}{b} \gnl
\gend
=
\gbeg{3}{6}
\got{1}{x} \got{1}{t} \gnl
\gcl{1} \gcl{1} \gnl
\grm \gnl
\grcm \gnl
\gcl{1} \gcl{1} \gnl
\gob{1}{x} \gob{1}{b.}
\gend
$$
Morphisms of right relative $t\x b$-modules are right $t$-linear and right $b$-colinear 2-cells in $\K$. 
Analogously, one defines a left {\em relative $t\x b$-module}. If $t=b$ we call $x$ a {\em relative bimonad module}.
\end{defn}

The above notions correspond to those of relative Hopf modules \cite{CF} and Hopf modules \cite{Ly} in braided monoidal categories, 
which in turn are categorifications of Hopf modules introduced in \cite{LaSw}. 
Obviously, $b$ itself is a relative bimonad module.

\begin{ex} \exlabel{relative mod}
Let $\F: (\K,c)\to (\K',d)$ be a bilax functor with compatible Yang-Baxter operator. 
For any 1-cell $x:A\to B$ the 1-cell $\F(x)$ is a left relative bimonad module over $\F(id_B)$ and a right relative bimonad module over $\F(id_A)$.
This follows from the first equation in \equref{bilax} by setting $id_B,id_B, id_B, x$, respectively $x, id_A, id_A, id_A$ for the 1-cells
$g,f,h,k$.
\end{ex}

Analogously to \prref{bilax preserves comod mod} we get the following result:

\begin{prop} \prlabel{relative mod}
Bilax functors $\F: (\K,c)\to (\K',d)$ with compatible Yang-Baxter operators preserve relative bimonad modules. 
\end{prop}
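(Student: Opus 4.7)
The plan is to mimic the argument of \prref{bilax preserves comod mod}, adapted to the relative Hopf module axiom in \deref{rel Hopf}. Let $b\colon A\to A$ be a $c$-bimonad, and let $x\colon A\to B$ be a relative bimonad module over $b$, so that $x$ carries compatible right $b$-action and right $b$-coaction. By \prref{preserve bimnd}, $\F(b)$ is a $d$-bimonad in $\K'$, and since $b$ is in particular a comodule monad over itself, \prref{bilax preserves comod mod} (or rather its comodule-monad variant) guarantees that $\F(b)$ is a $\F(b)$-comodule monad. By \equref{induced (co)mods}, $\F(x)$ inherits both a right $\F(b)$-module structure via the lax structure of $\F$ composed with $\F(\lhd)$, and a right $\F(b)$-comodule structure via $\F(\rho)$ composed with the colax structure of $\F$.

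It remains to verify the relative Hopf module compatibility of \deref{rel Hopf} for $\F(x)$ with respect to $\F(b)$. First I would expand both the action $\rhd$ and coaction of $\F(b)$ on $\F(x)$ on the left-hand side of the axiom using \equref{induced (co)mods}, thereby rewriting the expression as a composition of the lax and colax structure 2-cells of $\F$ around an occurrence of the compatible Yang-Baxter operator $d_{\F(b),\F(b)}$. The compatibility hypothesis $d_{\F(b),\F(b)} = \nu_{b,b}$ permits replacing this with the image of $c_{b,b}$ under $\F$ via naturality. At this point the crucial step is the bilaxity identity (the first equation in \equref{bilax}), which lets me merge a colax-then-lax composite $\F(\rho)\F(\rho)\,\F(1c1)\,\F(\lhd)\F(\lhd)$ into a single 2-cell $\F$ applied to the corresponding composite in $\K$. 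Functoriality of $\F$ then reduces the problem to applying $\F$ to the relative Hopf module equation satisfied by $x$ itself, after which I re-expand using \equref{induced (co)mods} in reverse to recognise the right-hand side.

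The main obstacle is bookkeeping: ensuring that the decomposition of the induced $\F(b)$-action/coaction via the lax, colax, and Yang-Baxter pieces arranges itself exactly in the shape required by \equref{bilax}, so that the middle $\F(1c1)$ produced by bilaxity absorbs the Yang-Baxter operator coming from the left-hand side. Once the alignment is correct the computation is a direct translation of the one in \prref{bilax preserves comod mod}, with $\F(\Delta_d)$ replaced by $\F(\rho)$ throughout. The argument on the left-sided variant is formally identical, using the right-action form of \equref{bilax} and the reflected form of \equref{induced (co)mods}. Together these prove that $\F(x)$ is a relative $\F(b)$-bimonad module, completing the proof.
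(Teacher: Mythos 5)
Your proposal is correct and follows exactly the route the paper intends: the paper gives no separate argument for this proposition, stating only that it follows ``analogously to'' \prref{bilax preserves comod mod}, and your computation --- expanding the induced $\F(b)$-(co)action on $\F(x)$ via \equref{induced (co)mods}, using compatibility to identify $d_{\F(b),\F(b)}$ with $\nu_{b,b}$, absorbing the middle via the first bilaxity identity in \equref{bilax}, merging by naturality of the (co)lax structures, and applying $\F$ to the relative module axiom for $x$ --- is precisely that analogue spelled out. The only cosmetic slip is attributing the replacement of the sandwiched Yang--Baxter operator by $\F(1c1)$ to ``naturality'' before correctly naming the bilaxity condition as the real mechanism.
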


\medskip

Hopf bimodules, for Hopf algebras over a field, appeared in the construction of bicovariant differential calculi over a Hopf algebra in 
\cite{Wor}. They were generalized in \cite[Section 4.2]{Besp} to the context of a braided monoidal category $\C$. For a bialgebra $B$ in $\C$ 
a Hopf bimodule is a $B$-bimodule $M$ in $\C$ which is moreover a $B$-bicomodule in the monoidal category of $B$-bimodules ${}_B\C_B$ 
(for the structures on $B$ itself the regular (co)action on $B$ and the diagonal action on tensor product of comodules are used). 
The latter means 
that both left and right $B$-comodule structures of $M$ are left and right $B$-bimodule morphisms, meaning that there are four conditions to be 
fulfilled. Together with simultaneously $B$-linear and $B$-bicolinear morphisms Hopf bimodules make a category denoted by ${}^B_B\C_B^B$. 
We mark that the name ``Hopf bimodules'' is somewhat misleading, as the Hopf structure on the bialgebra $B$ is not 
necessary here. 

Substituting a braided  monoidal category $\C$ and a bialgebra $B$ in it with a monoidal category with a Yang-Baxter operator $c$ and a 
$c$-bimonad in it, we can consider the analogous category of Hopf bimodules ${}^B_B(\C,c)_B^B$, where $\C$ and $B$ now have these new meanings. 

\begin{cor}
Let $\F:(\K,c)\to(\K',d)$ be a bilax functor with compatible Yang-Baxter operator. 
The functors \equref{end-functors} for every $A\in\Ob\K$ factor through the category of Hopf bimodules over the $d$-bimonads $\F(id_A)$ 
in $(\K'(\F(A), \F(A)),d)$.
\end{cor}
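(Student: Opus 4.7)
\noindent\textit{Proof plan.} Fix $A\in\Ob\K$ and, for a 1-endocell $x\in\K(A,A)$, write $M:=\F(x)$ and $B:=\F(id_A)$. By \prref{preserve bimnd} applied to the (trivial) $c$-bimonad $id_A$, $B$ is a $d$-bimonad in the monoidal category $\K'(\F(A),\F(A))$. The plan is to show that the lax and colax structure 2-cells of $\F$ naturally endow $M$ with a Hopf bimodule structure over $B$, and that the four Hopf-bimodule compatibilities arise as specialisations of the bilaxity relation \equref{bilax}.

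Define the left and right $B$-actions $\mu^l\colon B\circ M\to M$ and $\mu^r\colon M\circ B\to M$ to be the lax 2-cells $\F^2_{id_A,x}$ and $\F^2_{x,id_A}$ (using the strict identifications $id_A\circ x=x=x\circ id_A$ in $\K$), and dually the left and right $B$-coactions $\lambda$ and $\rho$ to be the corresponding colax 2-cells. The module, comodule, bimodule and bicomodule axioms -- including the commutation of the left and right (co)actions -- are direct consequences of the (co)associativity and (co)unitality of the lax and colax structure of $\F$ applied to the natural sub-factorisations of the 1-cell $id_A\circ id_A\circ x\circ id_A\circ id_A$.

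The heart of the argument is to derive the four Hopf-bimodule compatibilities (each expressing that a coaction is a left or right $B$-module morphism) from the first equation of \equref{bilax} under four choices of the 1-cells $(g,f,h,k)$. The choice $(id_A,id_A,x,id_A)$ yields the mixed identity $\rho\circ\mu^l=(\mu^l\circ\mu_B)\circ(1\circ\nu_{id_A,x}\circ 1)\circ(\Delta_B\circ\rho)$, stating that $\rho$ is a morphism of left $B$-modules with respect to the diagonal action twisted by $\nu_{id_A,x}=d_{B,M}$. The symmetric choice $(id_A,x,id_A,id_A)$ shows that $\lambda$ is a morphism of right $B$-modules. The remaining two substitutions $(x,id_A,id_A,id_A)$ and $(id_A,id_A,id_A,x)$ reproduce the right and left relative bimonad module conditions already supplied by \exref{relative mod} and \prref{relative mod}, giving the last two Hopf-bimodule compatibilities (with swap $d_{B,B}$). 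In each of these four substitutions the right-hand side of \equref{bilax} collapses dramatically: the 2-cell $\F(1\circ c\circ 1)$ becomes an identity, because $c_{f,h}$ in $\K$ reduces to an identity whenever one of $f,h$ equals $id_A$, by the unity-counity law \equref{YB-unity} applied to the trivial Yang--Baxter operator on $\Id_\K$.

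For morphisms, any 2-cell $\alpha\colon x\Rightarrow y$ in $\K(A,A)$ produces $\F(\alpha)\colon M\to\F(y)$, which is simultaneously $B$-bilinear and $B$-bicolinear by the naturality of the lax and colax 2-cells of $\F$ in their 1-cell arguments. Thus $\F_{A,A}$ lifts to a functor with values in ${}^B_B(\K'(\F(A),\F(A)),d)_B^B$ that recovers $\F_{A,A}$ after composition with the forgetful functor, as required. The main obstacle is not conceptual but rather the careful bookkeeping involved in unpacking \equref{bilax} in each of the four substitutions above and matching it with the corresponding Hopf-bimodule axiom, the key simplification being the collapse of $\F(1\circ c\circ 1)$ to an identity on the right-hand side.
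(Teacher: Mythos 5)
Your proof is correct and follows essentially the same route as the paper's: the two ``straight'' compatibilities come from the relative bimonad module substitutions $(x,id_A,id_A,id_A)$ and $(id_A,id_A,id_A,x)$ in the first equation of \equref{bilax} (i.e.\ from \exref{relative mod}), the two mixed ones from putting $x=f$, respectively $x=h$, with the remaining 1-cells identities, and the $\F(id_A)$-bi(co)linearity of $\F(\alpha)$ from naturality of the (co)lax structure. Your extra observation that $\F(1\circ c\circ 1)$ collapses because \equref{YB-unity}, applied to the Yang--Baxter operator of the identity 2-functor (whose lax unit and colax counit are identities), forces $c_{id_A,f}=\Id=c_{f,id_A}$ is a correct ingredient that the paper uses implicitly but leaves unstated.
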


\begin{proof}
For any 1-endocell $x:A\to A$ we should check that $\F(x)$ satisfies the four relations. Two of them are satisfied by \exref{relative mod}, 
which mean that the left coaction is left linear and that right coaction is right linear. The other two, meaning the two mixed versions of compatibilities, 
one gets by setting $x=f$, respectively $x=h$, and the resting three 1-cells to be identities, in the first equation in \equref{bilax}. 

To check the claim on morphisms, observe that for any 2-cell $\alpha:x\to y$ in $\K$, {\em i.e.} morphism in $\K(A,A)$,
$\F(\alpha)$ is $\F(id_A)$-(co)linear by the naturality of the (co)lax structure of $\F$.
\qed\end{proof}

\medskip

We record here some direct consequences for a bilax functor $(\F,\nu)$ from the first equation in \equref{bilax}.  
For simplicity, we may consider $\F:(\K,c)\to(\K',d)$ to have a compatible Yang-Baxter operator. 
By \equref{induced (co)mods} note that $\F(x)$ is a bi(co)module over $\F(id)$ for any 1-cell $x$ (acting among suitable 0-cells). 
Then we may write: 
\begin{center} \hspace{-1,4cm}
\begin{tabular} {p{6cm}p{1cm}p{6cm}} 
\begin{equation} \eqlabel{mod coalg-l}
\gbeg{4}{5}
\got{1}{\F(id)} \got{3}{\F(fk)} \gnl
\gcmu \gcmu \gnl
\gcl{1} \glmptb \gnot{\hspace{-0,34cm}\nu_{1,f}} \grmptb \gcl{1} \gnl
\glm \glm \gnl
\gvac{1} \gob{1}{\F(f)} \gob{3}{\F(k)}
\gend=
\gbeg{3}{5}
\got{1}{\F(id)} \got{3}{\F(fk)} \gnl
\gcn{1}{1}{1}{3} \gvac{1} \gcl{1} \gnl
\gvac{1}\glm \gnl
\gvac{1} \gwcm{3} \gnl
\gvac{1} \gob{1}{\F(f)} \gob{3}{\F(k)}
\gend
\end{equation} & &
\begin{equation}\eqlabel{comod alg-l}
\gbeg{4}{5}
\got{3}{\F(f)} \got{1}{\F(k)} \gnl
\glcm \glcm \gnl
\gcl{1} \glmptb \gnot{\hspace{-0,34cm}\nu_{f,1}} \grmptb \gcl{1} \gnl
\gmu \gmu \gnl
\gob{2}{\F(id)} \gob{2}{\F(fk)}
\gend=
\gbeg{2}{5}
\gvac{1} \got{1}{\F(f)} \got{3}{\F(k)} \gnl
\gvac{1} \gwmu{3} \gnl
\gvac{1} \glcm \gnl
\gcn{1}{1}{3}{1} \gvac{1} \gcl{1} \gnl
\gob{1}{\F(id)} \gob{3}{\F(fk)}
\gend
\end{equation}
\end{tabular}
\end{center}

\begin{center} \hspace{-1,4cm}
\begin{tabular} {p{6cm}p{1cm}p{6cm}} 
\begin{equation} \eqlabel{mod coalg-r}
\gbeg{4}{5}
\got{2}{\F(gh)} \got{2}{\F(id)} \gnl
\gcmu \gcmu \gnl%
\gcl{1} \glmptb \gnot{\hspace{-0,34cm}\nu_{h,1}} \grmptb \gcl{1} \gnl
\grm \grm \gnl
\gob{1}{\F(g)} \gob{3}{\F(h)} \gnl
\gend
=
\gbeg{3}{5}
\gvac{1} \got{1}{\F(gh)} \got{3}{\F(id)} \gnl
\gvac{1} \gcl{1} \gcn{1}{1}{3}{1} \gnl
\gvac{1} \grm \gnl
\gwcm{3} \gnl
\gob{1}{\F(g)} \gob{3}{\F(h)} \gnl
\gend,
\end{equation} & &
\begin{equation}\eqlabel{comod alg-r}
\gbeg{4}{5}
\got{2}{\F(g)} \got{2}{\F(h)} \gnl
\grcm \grcm \gnl
\gcl{1} \glmptb \gnot{\hspace{-0,34cm}\nu_{1,h}} \grmptb \gcl{1} \gnl
\gmu \gmu \gnl%
\gob{1}{\F(gh)} \gob{4}{\F(id)} \gnl
\gend
=
\gbeg{4}{5}
\got{1}{\F(g)} \gvac{1} \got{1}{\F(h)} \gnl
\gwmu{3} \gnl
\gvac{1} \grcm \gnl
\gvac{1} \gcl{1} \gcn{1}{1}{1}{3} \gnl
\gvac{1} \gob{1}{\F(gh)} \gob{3}{\F(id)} \gnl
\gend
\end{equation}
\end{tabular}
\end{center}
for 1-cells $A\stackrel{k}{\to}B\stackrel{h}{\to}B\stackrel{f}{\to}B \stackrel{g}{\to}C$.

\section{2-category of bilax functors} \selabel{2-cat Bilax} 

In this section we introduce the rest of the ingredients to construct a 2-category of bilax functors.

\subsection{Bilax natural transformations}  \sslabel{b.nat-tr}

Among bilax functors we introduce bilax natural transformations. Recall that
for a lax transformation $\psi$ and a colax transformation $\phi$, both acting between (bilax) functors $\F\Rightarrow\F'$,
for every 1-cell $f: A\to B$ there are 2-cells
$$\psi_f: \F'_{A,B}(f)\comp\psi(A) \Rightarrow  \psi(B)\comp\F_{A,B}(f) $$
and
$$\phi_f: \phi(B)\comp\F_{A,B}(f) \Rightarrow  \F'_{A,B}(f)\comp\phi(A) $$
natural in $f$. For the sake of the following definition we introduce the notation:
\begin{equation} \eqlabel{lambda}
\lambda_{xy,z}:=
\gbeg{4}{5}
\got{2}{\F(xy)} \got{2}{\F(z)} \gnl
\gcmu \gcl{1} \gnl
\gcl{1} \glmptb \gnot{\hspace{-0,34cm}\nu_{y,z}} \grmptb \gnl
\gmu \gcl{1}  \gnl
\gob{2}{\F(xz)} \gob{2}{\F(y)} \gnl
\gend
\end{equation}
for a bilax functor $\F$ and 1-cells  $A \stackrel{z}{\to} A \stackrel{y}{\to} A \stackrel{x}{\to} B$. Then the first equation in
\equref{bilax} can be expressed also as:
\begin{equation} \eqlabel{bilax-lambda} 
\gbeg{3}{5}
\got{0}{\F(gf)} \got{4}{\F(hk)} \gnl
\gcn{1}{1}{0}{1} \gcmu \gnl
\glmptb \gnot{\hspace{-0,34cm}\lambda_{gf,h}} \grmptb \gcl{1} \gnl
\gcn{1}{1}{1}{0} \gmu \gnl
\gob{0}{\F(gh)} \gob{4}{\F(fk)} \gnl
\gend
=
\gbeg{5}{5}
\got{1}{\F(gf)} \got{3}{\F(hk)} \gnl
\gwmu{3} \gnl
\glmp \gcmptb \gnot{\hspace{-0,78cm}\F(1c1)} \grmp \gnl
\gwcm{3} \gnl
\gob{1}{\F(gh)}\gvac{1}\gob{1}{\F(fk)}
\gend
\end{equation}
Observe that by \equref{eps-e} and the rules of the (co)lax structures of $\F$, one has:
\begin{equation} \eqlabel{lambda-x1}
\gbeg{2}{5}
\got{2}{\F(x)}  \gnl
\gcl{1} \gu{1} \gnl
\glmptb \gnot{\hspace{-0,38cm}\lambda_{x\hspace{0,04cm} id,id}} \grmptb \gnl
\gcl{1} \gcu{1} \gnl
\gob{1}{\F(x)} \gnl
\gend=\Id_{\F(x)}.
\end{equation}

\begin{defn}
A bilax natural transformation $\chi: \F\Rightarrow\F'$ between bilax functors is a pair $(\psi,\phi)$ consisting of a lax
natural transformation $\psi$ of lax functors and a colax natural transformation $\phi$ of colax functors,
which agree on the 1-cell components, {\em i.e.}
$\psi(A)=\phi(A):=\chi(A)$ for every $A\in\Ob\K$, and whose 2-cell components are related through the relation:
\begin{equation}\eqlabel{psi-lambda-phi for bimonads}
\gbeg{4}{7}
\got{-1}{\F'(xy)} \gvac{1} \got{3}{\chi(A)} \got{1}{\F(z)} \gnl
\gcn{1}{1}{0}{1} \gcl{1} \gcn{2}{2}{3}{1} \gnl
\glmptb \gnot{\hspace{-0,34cm}\psi_{xy}} \grmptb \gnl
\gcl{1} \glmptb \gnot{\hspace{-0,34cm}\lambda_{xy,z}} \grmptb \gnl
\glmptb \gnot{\hspace{-0,34cm}\phi_{xz}} \grmptb \gcn{2}{2}{1}{3} \gnl
\gcn{1}{1}{1}{0} \gcl{1} \gnl
\gob{-1}{\F'(xz)} \gvac{1} \gob{3}{\chi(A)} \gob{1}{\F(y)}
\gend=
\gbeg{3}{7}
\got{1}{\F'(xy)} \got{3}{\chi(A)} \got{1}{\F(z)}  \gnl
\gcn{2}{2}{0}{3} \gcl{1} \gcn{1}{1}{3}{1} \gnl
\gvac{1} \gvac{1} \glmptb \gnot{\hspace{-0,34cm}\phi_{z}} \grmptb \gnl
\gvac{1} \glmptb \gnot{\hspace{-0,34cm}\lambda'_{xy,z}} \grmptb \gcl{1} \gnl
\gcn{2}{2}{3}{0}  \glmptb \gnot{\hspace{-0,34cm}\psi_y} \grmptb \gnl
\gvac{2} \gcl{1} \gcn{1}{1}{1}{3} \gnl
\gob{1}{\F'(xz)} \gvac{1} \gob{1}{\chi(A)} \gob{3}{\F(y)}
\gend
\end{equation}
for composable 1-cells: $A\stackrel{z}{\to} A \stackrel{y}{\to} A \stackrel{x}{\to} B$.
We will denote it shortly as a triple $(\chi, \psi, \phi)$.
\end{defn}

In particular, if $y=z=id_A$ and one applies the unity of the lax structure of $\F$ on the top right in \equref{psi-lambda-phi for bimonads},
and the counity of the colax structure of $\F$ on the bottom right, one obtains (by \equref{lambda-x1} and \prref{(co)mod str}):
\begin{equation}\eqlabel{YD}
\gbeg{2}{6}
\got{-1}{\F'(x)} \gvac{1} \got{3}{\chi(A)} \gnl
\gcn{1}{1}{0}{1} \gcl{1} \gnl
\glmptb \gnot{\hspace{-0,34cm}\psi_{x}} \grmptb \gnl
\glmptb \gnot{\hspace{-0,34cm}\phi_{x}} \grmptb  \gnl
\gcn{1}{1}{1}{0} \gcl{1} \gnl
\gob{-1}{\F'(x)} \gvac{1} \gob{3}{\chi(A)}
\gend=
\gbeg{3}{5}
\got{1}{\F'(x)} \got{3}{\chi(A)}  \gnl
\gcl{1}  \glcm \gnl
\glmptb \gnot{\hspace{-0,34cm}\lambda'_{x\hspace{0,04cm} id,id}} \grmptb \gcl{1} \gnl
\gcl{1}\glm \gnl
\gob{1}{\F'(x)} \gob{3}{\chi(A)}
\gend
=
\gbeg{4}{5}
\got{2}{\F'(x)} \got{3}{\chi(A)} \gnl
\gcmu \glcm \gnl
\gcl{1} \glmptb \gnot{\hspace{-0,34cm}\nu_{id,id}} \grmptb \gcl{1} \gnl
\gmu \glm \gnl
\gob{2}{\F'(x)} \gob{3}{\chi(A)} \gnl
\gend
\end{equation}
which we will call the {\em Yetter-Drinfel`d condition on the bilax natural transformation $\chi$}.
Observe that the left module and comodule structures of $\chi(A)$ above are over $\F'(id_A)$.
The relation \equref{psi-lambda-phi for bimonads} we will call {\em strong Yetter-Drinfel`d condition on $\chi$},
and the 1-cells $\chi(A)$ in $\K'$ we will call {\em strong Yetter-Drinfel`d modules} (over $\F'(id_A)$).

\medskip

\begin{ex} \exlabel{bimnd-K}
Let $\K$ and $\K'$ be 2-categories with Yang-Baxter operators $c$ and $d$, respectively.
We fix two $d$-bimonads $b_0:A_0\to A_0$ and $b_1:A_1\to A_1$ in $\K'$ and define 
bilax functors $\F_0,\F_1:\K\to\K'$ by $\F_0:=\F_{b_0}$ and $\F_1:=\F_{b_1}$ with $\nu_i=d$ for $i=1,2$, as in \exref{my bimonads}.

A bilax natural transformation $(\chi, \psi, \phi)$ between bilax functors $\chi: \F_0\Rightarrow\F_1$
consists of a family of 2-cells
$$\psi_A: b_1\chi(A)\Rightarrow\chi(A) b_0 \quad \text{and}\quad \phi_A: \chi(A) b_0 \to b_1 \chi(A),$$
indexed by $A\in\Ob\K$, where $\psi_A$ (respectively $\phi_A$) are distributive laws with respect to the monad (resp. comonad)
structures of $b_0,b_1$ (by \equref{phi-colax} and its vertical dual), and 
it holds:
\begin{equation}  \eqlabel{YD-cond}
\gbeg{3}{5}
\got{1}{b_1} \got{1}{\chi_A} \got{1}{b_0} \gnl
\glmptb \gnot{\hspace{-0,34cm}\psi_A} \grmptb \gcl{1} \gnl
\gcl{1} \glmptb \gnot{\hspace{-0,34cm}\lambda_0} \grmptb \gnl
\glmptb \gnot{\hspace{-0,34cm}\phi_A} \grmptb \gcl{1} \gnl
\gob{1}{b_1} \gob{1}{\chi_A} \gob{1}{b_0}
\gend
=
\gbeg{3}{5}
\got{1}{b_1} \got{1}{\chi_A} \got{1}{b_0} \gnl
\gcl{1} \glmptb \gnot{\hspace{-0,34cm}\phi_A} \grmptb \gnl
\glmptb \gnot{\hspace{-0,34cm}\lambda_1} \grmptb \gcl{1} \gnl
\gcl{1} \glmptb \gnot{\hspace{-0,34cm}\psi_A} \grmptb \gnl
\gob{1}{b_1} \gob{1}{\chi_A} \gob{1}{b_0,}
\gend
\quad\text{and consequently}\quad
\gbeg{2}{4}
\got{1}{b_1} \got{1}{\chi_A} \gnl
\glmptb \gnot{\hspace{-0,34cm}\psi_A} \grmptb \gnl
\glmptb \gnot{\hspace{-0,34cm}\phi_A} \grmptb  \gnl
\gob{1}{b_1} \gob{1}{\chi_A}
\gend=
\gbeg{4}{5}
\got{2}{b_1} \got{3}{\chi_A} \gnl
\gcmu \glcm \gnl
\gcl{1} \glmptb \gnot{\hspace{-0,34cm}d} \grmptb \gcl{1} \gnl
\gmu \glm \gnl
\gob{2}{b_1} \gob{3}{\chi_A.}
\gend
\end{equation}
Note that $\lambda_0$ and $\lambda_1$ have the form:
$$\lambda_0=
\gbeg{4}{5}
\got{2}{b_0} \got{1}{b_0} \gnl
\gcmu \gcl{1} \gnl
\gcl{1} \glmptb \gnot{\hspace{-0,34cm}d} \grmptb \gnl  
\gmu \gcl{1}  \gnl
\gob{2}{b_0} \gob{1}{b_0} \gnl
\gend
\quad\text{and}\qquad
\lambda_1=
\gbeg{3}{5}
\got{2}{b_1} \got{1}{b_1} \gnl
\gcmu \gcl{1} \gnl
\gcl{1} \glmptb \gnot{\hspace{-0,34cm}d} \grmptb \gnl  
\gmu \gcl{1}  \gnl
\gob{2}{b_1} \gob{1}{b_1} \gnl
\gend
$$
and that the third equation in \equref{phi-colax} is now trivial. 
For every $A\in\Ob\K$, the triple $(\chi(A), \psi_A, \phi_A)$ is a 1-cell in the 2-category $\Bimnd(\K')$ from \cite[Section 7]{F2}, 
which we mentioned at the beginning of \ssref{mod}. The 0-cells of $\Bimnd(\K)$ are bimonads in $\K$ defined via a distributive 
law 2-cell $\lambda$, 1-cells are triples $(F,\psi,\phi)$ where $(F,\psi)$ is a 1-cell in $\Mnd(\K)$ and $(F,\phi)$ is a 1-cell 
in $\Comnd(\K)$ with a compatibility condition between $\psi, \phi$ and $\lambda$ (as in \equref{YD-cond} on the left), 
and a 2-cell is a single 2-cell $\zeta$ in $\K$ which is simultaneously a 2-cell in $\Mnd(\K)$ and in $\Comnd(\K)$. 
\end{ex}

\begin{ex} \exlabel{L-trivial: bimnd} 
If $\K=Del(\C)$ in the above Example is induced by a braided monoidal category $\C$,
the above bilax natural transformation $\chi: \F_0\Rightarrow\F_1:Del(\C)\to\K'$ is precisely a single 1-cell in $\Bimnd(Del(\C))$.
\end{ex}

\begin{ex} \exlabel{1-cell for K=1}
By \leref{T(id)} actually {\em any} two bilax functors $(\Tau_0,\nu),(\Tau_1,\nu'):1\to\K$ determine two bimonads in $\K$:  
$\T_0$ yields a $\nu$-bimonad $b_0:=\T(id_*)$ on $A=\T(*)$ and $\T_1$ a $\nu'$-bimonad $b_1:=\T_1(id_*)$ 
on $A'=\T'(*)$. Then analogously as in \exref{L-trivial: bimnd}, any bilax natural transformation $\chi: \Tau_0\Rightarrow\Tau_1:1\to\K'$ is 
precisely a single 1-cell in $\Bimnd(\K)$.
\end{ex}

\begin{ex} \exlabel{bilax trans triv} 
Consider a bilax transformation $\chi:(\T,\nu)\Rightarrow(\T',\nu'): 1\to\K$ from the trivial 2-category to a 2-category $\K$. 
Let $B:=\T(id_*)$ be the $\nu$-bimonad on $\A=\T(*)$ and $B':=\T'(id_*)$ the $\nu'$-bimonad  
on $\T'(*)$ as in \leref{T(id)}. Let $m(B)$ denote the monad part of $B$ and $c(B)$ the comonad part of $B$, and set $\chi(*)=X$. We find that 
$\psi:m(B')X\Rightarrow Xm(B)$ is a distributive law with respect to monads, and 
that $\phi:Xc(B)\Rightarrow c(B')X$ is a distributive law with respect to comonads. 

It is a nice exercise to prove the following lemma that we will use to pursue with this example. 

\begin{lma} \lelabel{ni-lambda}
Let $B$ be both a monad (to which we refer to as $m(B)$) and a comonad (we refer to it as $c(B)$) and suppose 
that $\nu:BB\Rightarrow BB$ is a distributive law, both on left and right side, with respect to monad $m(B)$ 
and with respect to comonad $c(B)$ (this means four distributive laws). Then 
$$\lambda:=
\gbeg{4}{5}
\got{2}{m(B)} \got{2}{c(B)} \gnl
\gcmu \gcl{1} \gnl
\gcl{1} \glmptb \gnot{\hspace{-0,34cm}\nu} \grmptb \gnl
\gmu \gcl{1}  \gnl
\gob{2}{c(B)} \gob{2}{m(B)} \gnl
\gend$$
is a distributive law on the left both with respect to monads and comonads, that is: 
\begin{equation}\eqlabel{lambda d.l.}
\gbeg{3}{5}
\got{0}{m(B)} \got{3}{m(B)} \got{0}{c(B)} \gnl
\gmu \gcn{1}{1}{2}{0} \gnl
\gvac{1} \hspace{-0,34cm} \glmptb \gnot{\hspace{-0,34cm}\lambda} \grmptb  \gnl
\gvac{1} \gcl{1} \gcl{1} \gnl
\gvac{1} \gob{0}{c(B)} \gob{3}{m(B)}
\gend
=
\gbeg{4}{5}
\gvac{1} \got{0}{m(B)} \gvac{1} \got{1}{m(B)} \got{2}{c(B)} \gnl
\gvac{1} \gcn{1}{1}{0}{1} \glmpt \gnot{\hspace{-0,34cm}\lambda} \grmptb \gnl
\gvac{1} \glmptb \gnot{\hspace{-0,34cm}\lambda} \grmptb \gcl{1} \gnl
\gvac{1} \gcl{1} \gmu \gnl
\gvac{1} \gob{1}{c(B)} \gob{2}{m(B)}
\gend, 
\quad
\gbeg{2}{5}
\got{1}{}\got{2}{c(B)} \gnl
\gu{1} \gcl{1} \gnl
\glmptb \gnot{\hspace{-0,34cm}\lambda} \grmptb \gnl
\gcl{1} \gcl{1} \gnl
\gob{0}{c(B)} \gob{3}{m(B)}
\gend=
\gbeg{2}{5}
\got{1}{c(B)} \gnl
\gcl{1} \gu{1} \gnl
\gcl{2}  \gcl{2} \gnl
\gob{0}{c(B)} \gob{3}{m(B)}
\gend;
\qquad
\gbeg{3}{5}
\got{1}{m(B)}\got{2}{c(B)} \gnl
\gcl{1} \gcmu \gnl
\glmptb \gnot{\hspace{-0,34cm}\lambda} \grmptb \gcl{1} \gnl
\gcn{1}{1}{1}{-1} \glmptb \gnot{\hspace{-0,34cm}\lambda} \grmptb \gnl
\gob{-1}{c(B)} \gvac{2} \gob{0}{c(B)} \gob{3}{m(B)}
\gend=
\gbeg{3}{5}
\got{1}{m(B)}\got{2}{c(B)} \gnl
\gcn{1}{1}{2}{2} \gcn{1}{1}{2}{2} \gnl
\gvac{1} \hspace{-0,34cm} \glmpt \gnot{\hspace{-0,34cm}\lambda} \grmptb \gnl
\gvac{1} \hspace{-0,2cm} \gcmu \gcn{1}{1}{0}{1} \gnl
\gvac{1} \gob{0}{c(B)} \gvac{1} \gob{1}{c(B)} \gob{2}{m(B)}
\gend,
\quad
\gbeg{2}{5}
\got{1}{m(B)}\got{2}{c(B)} \gnl
\gcl{1} \gcl{1} \gnl
\glmptb \gnot{\hspace{-0,34cm}\lambda} \grmptb \gnl
\gcu{1} \gcl{1} \gnl
\gob{3}{m(B)}
\gend=
\gbeg{3}{5}
\got{1}{m(B)}\got{2}{c(B)} \gnl
\gcl{1} \gcl{1} \gnl
\gcl{2}  \gcu{1} \gnl
\gob{1}{m(B).}
\gend
\end{equation}
\end{lma}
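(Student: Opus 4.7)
The plan is to verify each of the four identities of \equref{lambda d.l.} by unfolding the definition $\lambda = (\mu\otimes\id)\circ(\id\otimes\nu)\circ(\Delta\otimes\id)$ and then reducing both sides by means of (i) the four distributive-law axioms of $\nu$ (two with respect to the monad $m(B)$ and two with respect to the comonad $c(B)$), together with (ii) the bialgebra-type compatibilities between $(\mu,\eta)$ and $(\Delta,\epsilon)$, i.e.\ the first four identities of \equref{c-bimonad}, which are in force in the intended application where $B=\T(id_*)$ is a $\nu$-bimonad coming from a bilax functor $\T\colon 1\to\K$.

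First I would dispatch the (co)unit identities, as they are essentially self-contained. For the monad-unit identity $\lambda\circ(\eta\otimes\id)=\id\otimes\eta$ I expand the left-hand side; the compatibility $\Delta\eta=\eta\otimes\eta$ (third identity of \equref{c-bimonad}) splits the freshly introduced $\eta$, the unit axiom $\nu\circ(\eta\otimes\id)=\id\otimes\eta$ of $\nu$ as a left monad distributive law slides an $\eta$ across to the right strand, and finally $\mu\circ(\eta\otimes\id)=\id$ absorbs the leftover factor. The comonad-counit identity $(\epsilon\otimes\id)\circ\lambda=\id\otimes\epsilon$ is handled dually, invoking $\epsilon\mu=\epsilon\otimes\epsilon$ (second identity of \equref{c-bimonad}) and the counit axiom of $\nu$ as a right comonad distributive law.

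The monad-multiplicativity identity is the heart of the matter. After unfolding $\lambda\circ(\mu\otimes\id)$ one encounters the composite $\Delta\circ\mu$, which is rewritten via the first identity of \equref{c-bimonad} as $(\mu\otimes\mu)\circ(\id\otimes\nu\otimes\id)\circ(\Delta\otimes\Delta)$. The resulting expression contains three copies of $\nu$, two of $\Delta$ and two of $\mu$; by applying the left and right monad distributive laws of $\nu$ to slide the two outer $\nu$-crossings past the surrounding multiplications, and by using associativity of $\mu$ to rebracket, the expression rearranges into $(\id\otimes\mu)\circ(\lambda\otimes\id)\circ(\id\otimes\lambda)$, as required. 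The comonad-comultiplicativity identity is established by a completely symmetric calculation, this time exploiting the left and right comonad distributive laws of $\nu$ together with the same bialgebra-type compatibility of $\Delta$ with $\mu$.

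The main obstacle is exactly the multiplicativity and comultiplicativity steps: several $\nu$-crossings have to be routed past $\mu$- and $\Delta$-nodes in a precise order so that the residual structure reassembles into two copies of $\lambda$. This bookkeeping is most transparent in the string-diagrammatic calculus of \ssref{adj}, where each rewrite is a localized instance of a single axiom and the equality of the two final diagrams becomes a routine, essentially mechanical verification.
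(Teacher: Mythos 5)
Your verification is correct, and it is exactly the direct diagrammatic check the paper intends: the authors supply no proof of their own, declaring the lemma ``a nice exercise.'' The one substantive point is your added hypothesis (ii), and you are right to add it: the four distributive laws of $\nu$ alone do not suffice, since already the counit identity $(\epsilon\otimes\id)\circ\lambda=\id\otimes\epsilon$ fails in general without $\epsilon\mu=\epsilon\otimes\epsilon$ (take $\nu$ the flip in a symmetric category and any algebra-plus-coalgebra whose counit is not multiplicative), and likewise the unit identity needs $\Delta\eta=\eta\otimes\eta$ and the (co)multiplicativity identities need the first compatibility of \equref{c-bimonad}. So the lemma as literally stated is incomplete and must be read, as you do, in the context of \exref{bilax trans triv}, where $B=\T(id_*)$ is a $\nu$-bimonad. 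Your routing of the multiplicativity computation is also the right one: rewrite $\Delta\mu$ by the first identity of \equref{c-bimonad}, push the outer crossing of $\lambda$ through the inner multiplication via the left monad distributive law of $\nu$, and reassemble the target using the right monad distributive law together with associativity of $\mu$; comultiplicativity is the vertical mirror using the two comonad distributive laws and coassociativity. Only a trivial bookkeeping slip: immediately after the $\Delta\mu$ rewrite the expression carries two copies of $\nu$ and three of $\mu$ (the third $\nu$ appears only after invoking the monad distributive law), not the counts you state; this has no bearing on the validity of the argument.
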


\medskip

Continuing with the Example, we have that similarly $\nu'$ induces $\lambda'$. Moreover, the compatibilities 
\begin{equation}\eqlabel{psi-lambda-phi J. Power}
\gbeg{3}{5}
\got{1}{m(B')} \got{1}{X} \got{1}{c(B)} \gnl
\glmptb \gnot{\hspace{-0,34cm}\psi} \grmptb \gcl{1} \gnl
\gcl{1} \glmptb \gnot{\hspace{-0,34cm}\lambda} \grmptb \gnl
\glmptb \gnot{\hspace{-0,34cm}\phi} \grmptb \gcl{1} \gnl
\gob{1}{c(B')} \gob{1}{X} \gob{1}{m(B)}
\gend=
\gbeg{3}{5}
\got{1}{m(B')} \got{1}{X} \got{1}{c(B)}  \gnl
\gcl{1} \glmptb \gnot{\hspace{-0,34cm}\phi} \grmptb \gnl
\glmptb \gnot{\hspace{-0,34cm}\lambda'} \grmptb \gcl{1} \gnl
\gcl{1}  \glmptb \gnot{\hspace{-0,34cm}\psi} \grmptb \gnl
\gob{1}{c(B')} \gob{1}{X} \gob{1}{m(B)}
\gend
\qquad\text{and}\qquad
\gbeg{2}{6}
\got{-1}{m(B')} \gvac{1} \got{3}{X} \gnl
\gcn{1}{1}{0}{1} \gcl{1} \gnl
\glmptb \gnot{\hspace{-0,34cm}\psi} \grmptb \gnl
\glmptb \gnot{\hspace{-0,34cm}\phi} \grmptb  \gnl
\gcn{1}{1}{1}{0} \gcl{1} \gnl
\gob{-1}{c(B')} \gvac{1} \gob{3}{X}
\gend=
\gbeg{3}{5}
\got{1}{m(B)} \got{3}{X}  \gnl
\gcl{1}  \glcm \gnl
\glmptb \gnot{\hspace{-0,34cm}\lambda'} \grmptb \gcl{1} \gnl
\gcl{1}\glm \gnl
\gob{1}{c(B')} \gob{3}{X}
\gend
\end{equation}
hold. 
Then $(X,\psi,\phi):(\A,m(B), c(B),\lambda)\to(\A',m(B'), c(B'),\lambda')$ is a 1-cell in the 2-category of mixed distributive laws 
$\Dist(\K)$ of \cite[Definition 6.2]{PW}. (In the specific case when $X$ is a left $m(B')$-module and left $c(B')$-comodule, and 
$\psi$ and $\phi$ are given by 
$\psi=
\gbeg{3}{3}
\got{1}{m(B')} \got{1}{X} \gnl
\glm\ \gu{1}  \gnl
\gvac{1} \gob{1}{X} \gob{1}{m(B)}
\gend$ 
and 
$\phi=
\gbeg{3}{3}
\gvac{1} \got{1}{X} \got{1}{c(B)} \gnl
\glcm \gcu{1} \gnl
\gob{1}{c(B')} \gob{1}{X} 
\gend$, 
the two expressions in \equref{psi-lambda-phi J. Power} are equivalent and 
one recovers a particular form of $\lambda$-bialgebras by Turin and Plotkin, \cite[Section 7.2]{TP}.)  
\end{ex}

Suppose that $B$ is a bialgebra in a braided monoidal category $\C$. A (left) Yetter-Drinfel`d module over $B$ is an object $M$ together 
with a (left) action $B\ot M\to M$ and a (left) coaction $M\to B\ot M$ of $B$ subject to the compatibility condition: 
\begin{equation} \eqlabel{YD-def}
\gbeg{3}{8}
\got{2}{B} \got{1}{M} \gnl
\gcmu \gcl{1} \gnl
\gcl{1} \gbr \gnl
\glm \gcl{2} \gnl
\glcm \gnl
\gcl{1} \gbr \gnl
\gmu \gcl{1} \gnl
\gob{2}{B} \gob{1}{M} 
\gend
=
\gbeg{4}{5}
\got{2}{B} \got{3}{M} \gnl
\gcmu \glcm \gnl
\gcl{1} \gbr \gcl{1} \gnl
\gmu \glm \gnl
\gob{2}{B} \gob{3}{M.}
\gend
\end{equation}
The category of (left) Yetter-Drinfel`d modules over $B$ in $\C$ and left $B$-linear and $B$-colinear morphisms we denote by 
${}^B_B\YD(\C)$.

\begin{rem} \rmlabel{bialg vs Hopf}
Observe that the antipode, {\em i.e.} a Hopf algebra structure on a bialgebra in the context of Yetter-Drinfel'd modules, is
used in the following two instances. One is to construct the inverse for the braiding of the respective category. Another one is 
to formulate an equivalent condition to \equref{YD-def}. Thus, the category of Yetter-Drinfel'd modules
over a bialgebra is monoidal and even it has a pre-braiding (non-invertible), given by: 
$$
\gbeg{3}{5}
\gvac{1} \got{1}{M} \got{1}{N} \gnl
\glcm \gcl{1} \gnl
\gcl{1} \gbr \gnl
\glm \gcl{1} \gnl
\gvac{1} \gob{1}{N} \gob{1}{M.}
\gend
$$
\end{rem}

\begin{ex} \exlabel{triv-braided}
Consider two braided monoidal categories $\C$ and $\D$ and two bialgebras $B_0, B_1$ in $\D$. These give rise to two bilax 
(and bimonoidal) functors $F_{B_0}, F_{B_1}: Del(\C)\to Del(\D)$ as in \exref{my bimonads}. 
The bilax natural transformation $\chi: \F_0\Rightarrow\F_1:\K\to\K'$ from \exref{bimnd-K} corresponds to a generalized notion of a
Yetter-Drinfel`d module over $B_1$, which in view of the above we call {\em strong Yetter-Drinfel`d modules}.
\end{ex}

\begin{prop} \prlabel{YD specific}
Any Yetter-Drinfel'd module $M$ over a bialgebra $B'$ in a braided monoidal category $(\C,\Phi)$ comes from a bilax natural transformation
of \exref{triv-braided} where $\psi$ and $\phi$ are given by
\begin{equation} \eqlabel{left-left YD classic}
\psi=
\gbeg{3}{5}
\got{2}{B'} \got{1}{M} \gnl
\gcmu \gcl{1} \gnl
\gcl{1} \gbr \gnl
\glm \gbmp{\s j^{-1}} \gnl
\gvac{1} \gob{1}{M} \gob{1}{B}
\gend\hspace{0,16cm}, \quad
\phi=
\gbeg{3}{5}
\gvac{1} \got{1}{M} \got{1}{B} \gnl
\glcm \gbmp{j} \gnl
\gcl{1} \gbr \gnl
\gmu \gcl{1} \gnl
\gob{2}{B'} \gob{1}{M}
\gend\hspace{0,16cm}, \quad
\end{equation}
for any bialgebra isomorphism $j:B\to B'$. (More precisely, 
from a bilax endo-transformation 
with $j=id_B$ in \equref{left-left YD classic}.) 
\end{prop}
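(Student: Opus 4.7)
The plan is to verify directly that, setting $B=B'$ and $j=\id_B$, the data $\chi(\ast):=M$ together with $\psi$ and $\phi$ from \equref{left-left YD classic} assemble into a bilax endo-transformation of the bilax functor $F_B$ of \exref{triv-braided}. The argument has three parts: showing that $\psi$ is a monad distributive law, that $\phi$ is a comonad distributive law, and that the pair satisfies the strong Yetter-Drinfel'd condition, \emph{i.e.}\ the first equation of \equref{YD-cond}.

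Parts one and two are routine. For $\psi$, one unfolds the string-diagram expression and uses the bialgebra axioms (that $\Delta$ is an algebra morphism and that $\eta$ is grouplike), the module axioms of $M$, and naturality of $\Phi$; compatibility with the unit is immediate. Part two is entirely dual, using the comodule axioms of $M$ and the fact that $\Delta$ is counital. The naturality of $\psi$ and $\phi$ in the 1-cell argument is automatic, since $F_B$ sends every 1-cell to $B$ and every 2-cell to $\id_B$.

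For the third part, the plan is to expand the composites $\phi\circ\lambda\circ\psi$ and $\psi\circ\lambda\circ\phi$, acting on $B\otimes M\otimes B$, into their string-diagrammatic forms using the formulas of \equref{left-left YD classic} and the shape of $\lambda$ from \equref{lambda}. A sequence of moves that (i) uses naturality of the braiding to slide crossings past (co)multiplications, (ii) uses the hexagon identity and the Yang--Baxter equation to rearrange the resulting network of braidings, and (iii) applies the bialgebra coherence of $B$, brings both composites to a common normal form. The remaining equality between these normal forms is exactly \equref{YD-def}, the classical Yetter-Drinfel'd axiom on $M$.

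The main obstacle is bookkeeping rather than any deep algebraic insight: each side expands into a diagram with several copies of $\Delta$ and $\nabla$, a number of braidings, and one occurrence each of the action and coaction of $B$ on $M$. The key observation is that, once $\psi$ and $\phi$ are known to be distributive laws, the strong YD condition of \equref{YD-cond} is, in this specific form, algebraically equivalent to the classical YD identity \equref{YD-def}.
\qed
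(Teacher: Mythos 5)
Your overall strategy is sound and is essentially the one the paper follows, except that where you propose to carry out the verifications by direct string-diagram computation, the paper delegates them to external references: the fact that $\psi$ and $\phi$ are distributive laws is quoted from the beginning of Section 5.1 of \cite{F1} (with the extension to nontrivial $j$ obtained from the algebra-morphism property of $j^{-1}$ and the coalgebra-morphism property of $j$), and the passage from the classical Yetter--Drinfel'd identity to the bilax compatibility is quoted from \cite[Proposition 7.5]{F2}, whose hypotheses hold because $\C$ is braided. This means the one step you describe as "bookkeeping" is in fact the entire mathematical content of the proposition: the three-strand condition \equref{psi-lambda-phi for bimonads} on $B'\otimes M\otimes B$ is strictly stronger-looking than its two-strand consequence \equref{YD}, and your claim that it is "algebraically equivalent to the classical YD identity \equref{YD-def}" for these specific $\psi,\phi$ is precisely what \cite[Proposition 7.5]{F2} proves; asserting that a sequence of braiding and bialgebra moves "brings both composites to a common normal form" is not yet a proof of that reduction. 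Only the easy direction (strong implies classical, by inserting units and counits) is immediate. So your proposal is not wrong, but it leaves the genuinely nontrivial diagram chase undone, whereas the paper at least points to where it is done.

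Two smaller points. First, you restrict to $B=B'$ and $j=\id_B$ from the outset, which proves only the parenthetical "more precise" claim; the proposition also asserts the statement for an arbitrary bialgebra isomorphism $j\colon B\to B'$, and you should at least remark (as the paper does) that the (co)algebra morphism properties of $j$ and $j^{-1}$ let the same computations go through. Second, your remark that naturality of $\psi$ and $\phi$ in the 1-cell argument is automatic because $\F_B$ is constant is correct and matches \exref{my bimonads}.
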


\begin{proof}
The notation in these two diagrams is the usual one for braided monoidal categories, concretely
$\gbeg{2}{1}
\gmu \gnl
\gend$ and
$\gbeg{2}{1}
\gcmu \gnl
\gend$ stand for the (co)multiplication of $B'$. 
That the given $\psi$ and $\phi$ are desired distributive laws ({\em i.e.} (co)lax natural transformations) it was proved at 
the beginning of \cite[Section 5.1]{F1}, though for $B=B'$ and trivial $j$.
The algebra (resp. coalgebra) morphism property of $j^{-1}$ (resp. $j$) makes \equref{left-left YD classic} the desired distributive laws
for nontrivial $j$. The first claim now follows from \cite[Proposition 7.5]{F2}, whose conditions are fulfilled since $\C$ is braided.
Set $\nu_{F,X}=(M\ot j^{-1})\Phi_{F\s',X}$ and $\nu_{X,F}=\Phi_{X,F\s'}(M\ot j)$. The second claim follows from Corollary 7.6 of 
{\em loc. cit.}. 
\qed\end{proof}

\begin{ex} \exlabel{both-braided}
Let $(\chi, \psi, \phi)$ be a bilax natural transformation between bilax functors with compatible Yang Baxter operators 
$\chi: \F\Rightarrow\F':Del(\C)\to Del(\D)$
where $\C$ and $\D$ are braided monoidal categories 
with braidings $\Phi_\C$ and $\Phi_\D$, respectively. 
Then $F:=\F_{*,*}$ and $G:=\F'_{*,*}$ are bimonoidal functors $\C\to\D$ as in \exref{braided}, $\chi(*)=M$ is an object in $\D$
and there are morphisms
$$\psi_X: G(X)\ot M \to  M\ot F(X) \quad \text{and}\quad \phi_X: M\ot F(X) \to  G(X)\ot M$$
natural in $X\in\C$, where $\psi$ is a distributive law for the monoidal functor structures, and $\phi$ is a distributive law
for the comonoidal functor structures, so that the left identity below holds, and consequently the one next to it:
$$
\gbeg{4}{7}
\got{-1}{G(XY)} \gvac{1} \got{3}{M} \got{1}{F(Z)} \gnl
\gcn{1}{1}{0}{1} \gcl{1} \gcn{2}{2}{3}{1} \gnl
\glmptb \gnot{\hspace{-0,34cm}\psi_{XY}} \grmptb \gnl
\gcl{1} \glmptb \gnot{\hspace{-0,34cm}\lambda_{XY,Z}} \grmptb \gnl
\glmptb \gnot{\hspace{-0,34cm}\phi_{XZ}} \grmptb \gcn{2}{2}{1}{3} \gnl
\gcn{1}{1}{1}{0} \gcl{1} \gnl
\gob{-1}{G(XZ)} \gvac{1} \gob{3}{M} \gob{1}{F(Y)}
\gend=
\gbeg{6}{7}
\got{1}{G(XY)} \got{3}{M} \got{1}{F(Z)}  \gnl
\gcn{2}{2}{0}{3} \gcl{1} \gcn{1}{1}{3}{1} \gnl
\gvac{1} \gvac{1} \glmptb \gnot{\hspace{-0,34cm}\phi_{Z}} \grmptb \gnl
\gvac{1} \glmptb \gnot{\hspace{-0,34cm}\lambda'_{XY,Z}} \grmptb \gcl{1} \gnl
\gcn{2}{2}{3}{0}  \glmptb \gnot{\hspace{-0,34cm}\psi_Y} \grmptb \gnl
\gvac{2} \gcl{1} \gcn{1}{1}{1}{3} \gnl
\gob{1}{G(XZ)} \gvac{1} \gob{1}{M} \gob{3}{F(Y),}
\gend
\qquad
\gbeg{2}{4}
\got{1}{G(X)} \got{1}{M} \gnl
\glmptb \gnot{\hspace{-0,34cm}\psi_X} \grmptb \gnl
\glmptb \gnot{\hspace{-0,34cm}\phi_X} \grmptb  \gnl
\gob{1}{G(X)} \gob{1}{M}
\gend=
\gbeg{4}{5}
\got{2}{G(X)} \got{3}{M} \gnl
\gcmu \glcm \gnl
\gcl{1} \gbr \gcl{1} \gnl
\gmu \glm \gnl
\gob{2}{G(X)} \gob{3}{M,}
\gend
\quad\text{with}\quad
\lambda_{XY,Z}:=
\gbeg{4}{5}
\got{2}{F(XY)} \got{2}{F(Z)} \gnl
\gcmu \gcl{1} \gnl
\gcl{1} \gbr \gnl
\gmu \gcl{1}  \gnl
\gob{2}{F(XZ)} \gob{2}{F(Y).} \gnl
\gend
$$
For bialgebras $B$ in $\C$ by \prref{preserve bimnd} $F(B), G(B)$ are bialgebras in $\D$, 
and if $\psi_B, \phi_B$ are of the form as in \equref{left-left YD classic}, we recover classical Yetter-Drinfel`d modules in $\C$.
\end{ex}

The bilax natural transformations, {\em i.e.} identities \equref{psi-lambda-phi for bimonads} and \equref{YD}, offer the following
point of view. Given any monoidal category $\D$, when one considers the center category $\Z_l^w(\D)$, one is given a family of colax 
transformations $\phi$. In particular, when $\D={}_H\C$ the category of modules over a bialgebra or a Hopf algebra $H$ in a braided
monoidal category $\C$, one is able to construct lax transformations $\psi$ (as in \equref{left-left YD classic}) so that the given
$\phi$ and this $\psi$ obey \equref{YD} - since $\phi$ is $H$-linear, being a morphism in ${}_H\C$. Similarly, considering $\Z_r^w({}^H\C)$
one is given $\psi$'s and one constructs $\phi$'s, so that they together obey \equref{YD}. As in the proof of the above Proposition (that is,
as proved in \cite[Proposition 7.5]{F2}), in this setting the bilax condition \equref{psi-lambda-phi for bimonads} follows.

\subsection{Bilax modifications}

We finally introduce: 

\begin{defn}
Let $\chi, \chi' : (\F, \nu) \Rightarrow (\F', \nu') : (\K, c) \to \K'$ be bilax natural transformations. 
A {\em bilax modification} $a: \chi \Rrightarrow \chi'$ is a collection of 2-cells $(a(A))_{A\in\Ob(\K)}$ satisfying equations:
\begin{center} \hspace{-0,6cm}
\begin{tabular}{p{6.6cm}p{-1cm}p{8.6cm}}
\begin{equation}  \eqlabel{lax modif}
\gbeg{3}{6}
\got{1}{\chi_B}\got{1}{\F(x)} \gnl
\gcl{1} \gcl{1} \gnl
\glmptb \gnot{\hspace{-0,34cm}\phi_x} \grmptb \gnl
\gcl{1} \glmptb \gnot{\hspace{-0,34cm}a(A)} \grmp \gnl
\gcl{1} \gcl{1} \gnl
\gob{0}{\F'(x)} \gob{3}{\chi'_A}
\gend=
\gbeg{2}{6}
\gvac{1} \got{1}{\chi_B}\got{1}{\F(x)} \gnl
\gvac{1} \gcl{1} \gcl{1} \gnl
\glmp \gnot{\hspace{-0,34cm}a(B)} \grmptb\gcl{1} \gnl
\gvac{1} \glmptb \gnot{\hspace{-0,34cm}\phi'_x} \grmptb \gnl
\gvac{1} \gcl{1} \gcl{1} \gnl
\gvac{1} \gob{0}{\F'(x)} \gob{3}{\chi'_A}
\gend
\end{equation} & &
\begin{equation}  \eqlabel{colax modif}
\gbeg{3}{6}
\got{2}{\F'(x)} \got{1}{\chi_A} \gnl
\gvac{1} \gcl{1} \gcl{1} \gnl
\gvac{1} \glmptb \gnot{\hspace{-0,34cm}\psi_x} \grmptb \gnl
\glmp \gnot{\hspace{-0,34cm}a(B)} \grmptb\gcl{1} \gnl
\gvac{1} \gcl{1} \gcl{1} \gnl
\gvac{1}  \gob{0}{\chi'_B} \gob{3}{\F(x)}
\gend
=
\gbeg{3}{6}
\got{0}{\F'(x)} \got{3}{\chi_A} \gnl
\gcl{1} \gcl{1} \gnl
\gcl{1} \glmptb \gnot{\hspace{-0,34cm}a(A)} \grmp \gnl
\glmptb \gnot{\hspace{-0,34cm}\psi'_x} \grmptb \gnl
\gcl{1} \gcl{1} \gnl
\gob{1}{\chi'_B} \gob{2}{\F(x).}
\gend
\end{equation}
\end{tabular}
\end{center}
\end{defn}

Equivalently, a bilax modification is a modification both of lax and colax natural transformations: 
$a: \psi\Rrightarrow\psi'$ and $a: \phi\Rrightarrow\phi'$, 
where $(\psi,\phi)$ constitute $\chi$ and $(\psi',\phi')$ constitute $\chi'$.

\begin{ex} \exlabel{2-cells Bimnd}
Pursuing \exref{1-cell for K=1} a bilax modification between bilax natural transformations of bilax functors $1\to\K$ is precisely a 2-cell in $\Bimnd(\K)$. 
\end{ex}

\begin{ex} \exlabel{bilax modif} 
Recall \exref{bilax trans triv} where bilax natural transformations are 1-cells in the 2-category of mixed distributive laws 
$\Dist(\K)$ of \cite[Definition 6.2]{PW}. In this setting a bilax modification of bilax natural transformations is a 2-cell 
$\zeta: X\Rightarrow Y$ in $\K$ that satisfies:
$$
\gbeg{3}{4}
\got{1}{m(B')} \got{1}{X} \gnl
\glmptb \gnot{\hspace{-0,34cm}\psi} \grmptb \gnl
\gbmp{\zeta} \gcl{1} \gnl
\gob{1}{Y} \gob{1}{m(B)}
\gend
=
\gbeg{3}{4}
\got{1}{m(B')} \got{1}{X} \gnl
\gcl{1} \gbmp{\zeta} \gnl
\glmptb \gnot{\hspace{-0,34cm}\psi'} \grmptb \gnl
\gob{1}{Y} \gob{1}{m(B)}
\gend
\qquad \text{and} \qquad
\gbeg{2}{4}
\got{1}{X}\got{1}{c(B)} \gnl
\glmptb \gnot{\hspace{-0,34cm}\phi} \grmptb \gnl
\gcl{1} \gbmp{\zeta} \gnl
\gob{1}{c(B')} \gob{1}{Y}
\gend=
\gbeg{2}{4}
\got{1}{X}\got{1}{c(B)} \gnl
\gbmp{\zeta} \gcl{1} \gnl
\glmptb \gnot{\hspace{-0,34cm}\phi'} \grmptb \gnl
\gob{1}{c(B')} \gob{1}{Y.}
\gend
$$
As such it is a 2-cell in the 2-category $\Dist(\K)$ of \cite[Definition 6.2]{PW}. 
\end{ex}

\begin{ex} 
In the setting of \exref{triv-braided}, where bilax natural transformations are strong Yetter-Drinfel`d modules, 
a bilax modification of bilax natural transformations is a morphism $f$ in $\D$ satisfying:
$$
\gbeg{3}{4}
\got{1}{B'} \got{1}{M} \gnl
\glmptb \gnot{\hspace{-0,34cm}\psi} \grmptb \gnl
\gbmp{f} \gcl{1} \gnl
\gob{1}{N} \gob{1}{B}
\gend
=
\gbeg{3}{4}
\got{1}{B'} \got{1}{M} \gnl
\gcl{1} \gbmp{f} \gnl
\glmptb \gnot{\hspace{-0,34cm}\psi'} \grmptb \gnl
\gob{1}{N} \gob{1}{B}
\gend
\qquad \text{and} \qquad
\gbeg{2}{4}
\got{1}{M}\got{1}{B} \gnl
\glmptb \gnot{\hspace{-0,34cm}\phi} \grmptb \gnl
\gcl{1} \gbmp{f} \gnl
\gob{1}{B'} \gob{1}{N}
\gend=
\gbeg{2}{4}
\got{1}{M}\got{1}{B} \gnl
\gbmp{f} \gcl{1} \gnl
\glmptb \gnot{\hspace{-0,34cm}\phi'} \grmptb \gnl
\gob{1}{B'} \gob{1}{N.}
\gend
$$
By \equref{left G(b)-comod} and \equref{left G(b)-mod} this means that $f$ is both a morphism of left $B'$-modules and
left $B'$-comodules. This is a morphism of strong Yetter-Drinfel'd modules from \exref{triv-braided}.
\end{ex}

Now we may formulate:

\begin{prop}
The category of Yetter-Drinfel`d modules ${}^B_B\YD(\D)$ over a bialgebra $B$ in a braided monoidal category $(\D,\Phi_\D)$ is a 
full subcategory of the category $\Bilax(\F_B)$ of bilax endo-transformations on a bilax functor $\F_B:(Del(\C),\Phi_\C)\to (Del(\D),\Phi_\D)$ with compatible Yang-Baxter operator as in \exref{triv-braided} and bilax modifications.
\end{prop}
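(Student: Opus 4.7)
The plan is to construct an explicit functor $\iota \colon {}^B_B\YD(\D) \to \Bilax(\F_B)$ and verify that it is an equivalence of $ {}^B_B\YD(\D)$ with its essential image, which is then by construction a full subcategory of $\Bilax(\F_B)$.

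On objects, I would send a Yetter-Drinfel`d module $M \in {}^B_B\YD(\D)$ to the bilax endo-transformation $\chi_M = (\chi_M, \psi_M, \phi_M)$ on $\F_B$ determined by $\chi_M(*) = M$ with $\psi_M$ and $\phi_M$ defined by the formulas in \equref{left-left YD classic} with $j = \id_B$. This is well defined by \prref{YD specific}, which guarantees that such data assemble into a bona fide bilax natural transformation: it is shown there that $\psi_M$ is a lax natural transformation of lax functors, that $\phi_M$ is a colax natural transformation of colax functors, and that together they satisfy the strong Yetter--Drinfel`d condition \equref{psi-lambda-phi for bimonads}.

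On morphisms, given $f \colon M \to N$ in ${}^B_B\YD(\D)$, I would set $\iota(f) \coloneqq f$, viewed as the single-component 2-cell of a modification $\chi_M \Rrightarrow \chi_N$. To see that $\iota(f)$ is a bilax modification, I would verify the two identities of \exref{bilax modif} directly from the explicit shape of $\psi_M, \psi_N, \phi_M, \phi_N$ in \equref{left-left YD classic}: the $\psi$-compatibility reduces, after cancelling the braiding and rewriting via naturality of $\Phi_\D$, to left $B$-linearity of $f$, while the $\phi$-compatibility reduces analogously to left $B$-colinearity of $f$. Functoriality (preservation of composition and identity) is then immediate, since composition of bilax modifications is computed on the single 1-cell component $*$.

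Faithfulness is clear, as a bilax modification $\chi_M \Rrightarrow \chi_N$ is determined by a unique 2-cell in $\D$ and two distinct $B$-linear $B$-colinear maps $M \rightrightarrows N$ produce two distinct modifications. For fullness, I would take an arbitrary bilax modification $a \colon \chi_M \Rrightarrow \chi_N$ and set $f \coloneqq a(*) \colon M \to N$. Plugging the explicit formulas of $\psi_M, \psi_N$ into the $\psi$-equation of \exref{bilax modif} and post-composing with $\varepsilon_B \otimes \id_N$ on the bottom (so as to collapse the right-hand copy of $B$ against a counit), the braiding components $\Phi_\D$ and their inverses cancel and one recovers precisely the left $B$-linearity of $f$; dually, pre-composing the $\phi$-equation with $\eta_B \otimes \id_M$ yields left $B$-colinearity of $f$.

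The only mildly delicate step is this unpacking in the fullness argument: one must keep track of which branch of the braiding appears, use that $j = \id_B$, and apply the (co)unit axioms of the bialgebra $B$ to eliminate the auxiliary copies of $B$ introduced via $\Delta_B$ and $\mu_B$ in \equref{left-left YD classic}. Beyond this bookkeeping, the proof is entirely formal: $\iota$ is an injective-on-objects, fully faithful embedding, which is exactly what it means for ${}^B_B\YD(\D)$ to be a full subcategory of $\Bilax(\F_B)$.
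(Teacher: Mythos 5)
Your proposal is correct and follows essentially the same route the paper intends: the object assignment is exactly \prref{YD specific} with $j=\id_B$, and the identification of bilax modifications with $B$-linear, $B$-colinear maps is the content of the example immediately preceding the proposition (via collapsing the auxiliary $B$-legs of $\psi$ and $\phi$ against the counit and unit, as you describe). The paper leaves this assembly implicit, and your writeup simply makes the faithfulness and fullness checks explicit.
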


Similarly one has: 

\begin{prop} \prlabel{YD iso}
The category ${}^B_B\YD(\D)$ is isomorphic to the category $\Bilax(\T_B)$ of bilax endo-transformations on a bilax functor 
$\T_B:1\to (Del(\D),\Phi_\D)$ with compatible Yang-Baxter operator as in \leref{blx from triv} and bilax modifications.
\end{prop}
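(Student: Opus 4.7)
The plan is to chain together three identifications already present in the excerpt so that the isomorphism of categories drops out as their composition.

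First, by \leref{blx from triv} specialized to the braided setting (cf.\ \exref{triv-braided fun}), the bilax functor $\T_B:1\to Del(\D)$ with compatible Yang--Baxter operator is the same datum as the bialgebra $B\in\D$: the monad and comonad parts of $\T_B(id_*)$ are precisely the multiplication and comultiplication of $B$, and the bilaxity axioms collapse to the bialgebra axioms. Second, by \exref{1-cell for K=1} a bilax endo-transformation $\chi:\T_B\Rightarrow\T_B$ is exactly a 1-cell from $B$ to $B$ in the 2-category $\Bimnd(Del(\D))$ of \cite{F2}, i.e.\ a triple $(X,\psi,\phi)$ with $X\in\D$, with $\psi\colon B\otimes X\to X\otimes B$ a distributive law between the monad structures, $\phi\colon X\otimes B\to B\otimes X$ a distributive law between the comonad structures, satisfying the compatibility \equref{YD-cond} where $\lambda_0=\lambda_1$ is built from the braiding $\Phi_\D$, as in \exref{bimnd-K}.

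Third, I invoke \cite[Proposition 7.5 and Corollary 7.6]{F2}, applied in the braided setting: such triples $(X,\psi,\phi)$ stand in bijection with Yetter-Drinfel`d module structures on $X$ over $B$ in the sense of \equref{YD-def}. The direction from YD modules to bilax transformations is the recipe \equref{left-left YD classic} with $j=\id_B$, already checked in the proof of \prref{YD specific}. The reverse direction recovers the action $\mu\colon B\otimes X\to X$ and coaction $\delta\colon X\to B\otimes X$ from $\psi$ and $\phi$ by post/pre-composing with $\eta_B$ and $\Epsilon_B$ and then untwisting with the inverse braiding; invertibility of $\Phi_\D$ is precisely what makes the two constructions mutually inverse (this is where the absence of an antipode is harmless, cf.\ \rmref{bialg vs Hopf}).

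For morphisms, I use \exref{2-cells Bimnd}: a bilax modification of bilax endo-transformations of $\T_B$ is a single 2-cell in $\Bimnd(Del(\D))$, i.e.\ a morphism $f\colon X\to Y$ in $\D$ compatible simultaneously with the $\psi$'s and the $\phi$'s of source and target. Under the bijection of step three this becomes exactly left $B$-linearity together with left $B$-colinearity, i.e.\ a morphism of Yetter-Drinfel`d modules. Identities and composition transport on the nose, so the assignment is a functor with a strict two-sided inverse.

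The main obstacle is step three: one must check that, after extracting the action and coaction from $(\psi,\phi)$, the bimonadic compatibility \equref{YD-cond} becomes the classical Yetter-Drinfel`d axiom \equref{YD-def}. This is a string-diagrammatic calculation inside the braided category $\D$ that rests on invertibility of $\Phi_\D$ and on the explicit form of $\lambda$ coming from the braiding; it is precisely the computation carried out in \cite[Corollary 7.6]{F2}. Once this is in place, everything else is routine bookkeeping of the three identifications.
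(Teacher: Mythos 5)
Your argument follows the same route as the paper: the proposition is derived there from the 2-category isomorphism $\Bilax(1,\K)\iso\Bimnd(\K)$ (assembled from \leref{T(id)}, \exref{1-cell for K=1} and \exref{2-cells Bimnd}) together with the identification of the endo-hom-category of $\Bimnd(Del(\D))$ at $B$ with ${}^B_B\YD(\D)$ via \cite[Proposition 7.5, Corollary 7.6]{F2} and \equref{left-left YD classic} with $j=\id_B$, which is precisely your chain of identifications. The only cosmetic discrepancy is that the action and coaction are recovered from $\psi_b$ and $\phi_b$ simply by composing with the comonad counit and the monad unit as in \prref{(co)mod str} --- the inverse braiding enters only when checking that the two constructions are mutually inverse --- but this does not affect the argument.
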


\subsection{2-category of bilax functors}

We finish this section by concluding that bilax functors (with compatible Yang-Baxter operator) $\K\to\K'$, 
bilax natural transformations and bilax modifications form a 2-category $\Bilax(\K,\K')$ (respectively, $\Bilax_c(\K,\K')$). 
The composition of bilax transformations $(\chi, \psi, \phi):\F\Rightarrow\G$ and $(\chi', \psi',\phi'): \G\Rightarrow\HH$
is easily seen to be induced by the vertical compositions of the (co)lax transformations $\psi'\cdot\psi, \phi'\cdot\phi$, namely by:
 $$(\psi'\cdot\psi)_f=
\gbeg{5}{6}
\got{-1}{\HH(f)} \gvac{2} \got{1}{\psi'(A)} \got{3}{\psi(A)} \gnl
\gcn{1}{1}{-1}{1} \gcl{1} \gcn{2}{2}{3}{1} \gnl
\glmptb \gnot{\hspace{-0,34cm}\psi'_f} \grmptb \gnl
\gcn{1}{2}{1}{-1} \glmptb \gnot{\hspace{-0,34cm}\psi_f} \grmptb \gnl
\gvac{1} \gcl{1} \gcn{1}{1}{1}{3} \gnl
\gob{-2}{\psi'(B)} \gvac{3} \gob{1}{\psi(B)} \gvac{1} \gob{2}{\F(f)}
\gend\qquad\text{and}\qquad
(\phi'\cdot\phi)_f=
\gbeg{5}{6}
\got{-1}{\phi'(B)} \gvac{2} \got{1}{\phi(B)} \got{3}{\F(f)} \gnl
\gcn{1}{2}{-1}{1} \gcl{1} \gcn{1}{1}{3}{1} \gnl
\gvac{1} \glmptb \gnot{\hspace{-0,34cm}\phi_f} \grmptb \gnl
\glmptb \gnot{\hspace{-0,34cm}\phi'_f} \grmptb \gcn{1}{2}{1}{2} \gnl
\gcn{1}{1}{1}{-1} \gcl{1} \gnl
\gob{-1}{\HH(f)} \gvac{2} \gob{1}{\phi'(A)} \gvac{1} \gob{1}{\phi(A).}
\gend
$$
Bilax modifications compose both horizontally and vertically, in the obvious and natural way. 

\medskip

We comment for the record that although the lax and colax natural transformations compose horizontally by:
\begin{equation} \eqlabel{comp colax}
(\psi'\comp\psi)_f=
\gbeg{5}{7}
\got{-1}{\G'\G(f)} \gvac{2} \got{2}{\psi'\G(A)} \gvac{1} \got{1}{\F'\psi(A)} \gnl
\gcn{1}{1}{-1}{1} \gcl{1} \gcn{2}{2}{4}{3} \gnl
\glmptb \gnot{\hspace{-0,34cm}\psi'_{\G(f)}} \grmptb \gnl
\gcl{1} \gwmu{3} \gnl
\gcn{1}{2}{1}{-2} \glmp \gcmptb \gnot{\hspace{-0,8cm}\F'(\psi_f)} \grmp \gnl
\gvac{1} \gwcm{3} \gnl
\gob{-2}{\psi'\G(B)} \gvac{3} \gob{1}{\F'\psi(B)} \gvac{1} \gob{2}{\F'\F(f)}
\gend\qquad\text{and}\qquad
(\phi'\comp\phi)_f=
\gbeg{5}{7}
\got{-2}{\phi'\G(B)} \gvac{3} \got{1}{\F'\phi(B)} \gvac{1} \got{2}{\F'\F(f)} \gnl
\gcn{1}{2}{-2}{1} \gwmu{3} \gnl
\gvac{1} \glmp \gcmptb \gnot{\hspace{-0,8cm}\F'(\phi_f)} \grmp \gnl
\gcl{1} \gwcm{3} \gnl
\glmptb \gnot{\hspace{-0,34cm}\phi'_{\G(f)}} \grmptb \gcn{2}{2}{3}{4} \gnl
\gcn{1}{1}{1}{-1} \gcl{1} \gnl
\gob{-1}{\G'\G(f)} \gvac{2} \gob{2}{\phi'\G(A)} \gvac{1} \gob{1}{\F'\phi(A),}
\gend
\end{equation}
the horizontal composition of lax and colax natural transformations does not induce a bilax
transformation. Namely, in order for this to work, the (co)lax structures should be identities.

\bigskip

We finally compare the 2-category $\Bilax(\K,\K')$, 
more precisely its special case $\Bilax(1,\K)$, 
with two other existing 2-categories in the literature, namely $\Dist(\K)$ and $\Bimnd(\K)$ mentioned before.   
From \leref{T(id)}, \exref{1-cell for K=1} and \exref{2-cells Bimnd} we clearly have:

\begin{prop}
There is a 2-category isomoprhism
$$\Bilax(1, \K)\iso\Bimnd(\K).$$
\end{prop}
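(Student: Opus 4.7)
The plan is to exhibit a strict 2-functor $\Phi\colon\Bilax(1,\K)\to\Bimnd(\K)$ whose action on each type of cell is already isolated by the preceding results, and then to read off an inverse. Since the domain 2-category $1$ has only identity higher cells, every piece of a bilax functor on $1$ is concentrated on a single 0-, 1- or 2-cell datum, which makes the comparison essentially component-by-component.

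On 0-cells, \leref{T(id)} sends a bilax functor $(\T,\nu)\colon 1 \to \K$ to the $\nu$-bimonad $\T(id_*)$ on $\T(*)$; conversely, a $\nu$-bimonad on $A$ determines a bilax functor by sending $*$, $id_*$ and $\id_{id_*}$ respectively to $A$, $b$, and $\id_b$, with (co)lax and Yang-Baxter structure coinciding with the (co)multiplication, (co)unit and distributive law of $b$. On 1-cells, \exref{1-cell for K=1} identifies a bilax natural transformation $(\chi,\psi,\phi)\colon \T_0 \Rightarrow \T_1$ with the triple $(\chi(*),\psi_*,\phi_*)$: the lax part $\psi_*$ is a monad distributive law $b_1\chi(*)\Rightarrow\chi(*)b_0$, the colax part $\phi_*$ is a comonad distributive law $\chi(*)b_0\Rightarrow b_1\chi(*)$, and the strong Yetter--Drinfel`d condition \equref{psi-lambda-phi for bimonads} specialises to the compatibility \equref{YD-cond} defining a 1-cell in $\Bimnd(\K)$. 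On 2-cells, \exref{2-cells Bimnd} equates a bilax modification with a 2-cell of $\K$ that is a modification for both $\psi$ and $\phi$, which is exactly a 2-cell of $\Bimnd(\K)$. The candidate inverse $\Phi^{-1}$ is simply the reverse reading of these three identifications.

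What remains is to verify that $\Phi$ and $\Phi^{-1}$ respect compositions and identities. Units are tautological. For vertical composition of 1-cells, the pasting formulas for horizontal composition of bilax transformations (displayed just before this Proposition) reduce, when the source is $1$, to vertical concatenation of $\psi$'s and of $\phi$'s, which is precisely the composition rule in $\Bimnd(\K)$ inherited from $\Mnd(\K)$ and $\Comnd(\K)$. Horizontal and vertical compositions of bilax modifications are defined componentwise and translate directly. I expect the only mildly delicate point to be checking that the degenerate form of the bilaxity axioms \equref{bilax} --- once all 1-cells in $1$ are forced to be $id_*$ and the Yang-Baxter operator of $1$ is trivial --- collapses precisely onto the four defining relations \equref{c-bimonad} of a $\nu$-bimonad, and that the unit/counit normalisations \equref{YB-unity} and \equref{lambda-x1} do not introduce extra conditions. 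Once this bookkeeping is complete, $\Phi$ is a strict 2-functor with explicit componentwise inverse, yielding the claimed isomorphism.
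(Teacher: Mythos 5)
Your proposal is correct and follows essentially the same route as the paper, which derives the isomorphism directly from \leref{T(id)}, \exref{1-cell for K=1} and \exref{2-cells Bimnd} (the paper states it as "we clearly have" without spelling out the functoriality checks you outline). Your additional bookkeeping --- matching the degenerate bilaxity axioms \equref{bilax} with \equref{c-bimonad} and checking that composition of bilax transformations reduces to the composition of 1-cells in $\Bimnd(\K)$ --- is exactly the content the paper leaves implicit.
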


From \leref{T(id)}, \exref{bilax trans triv} and \exref{bilax modif}, 
it can be appreciated that on the level of 1- and 2-cells there is a 
faithful assignment $\Bimnd(\K)\hookrightarrow\Dist(\K).$ 
Since the 0-cells of $\Dist(\K)$ are given by tupples $(\A, T, D, \lambda)$, where $T$ is a monad 
and $D$ a comonad on a 0-cell $\A$ in $\K$, and $\lambda:TD\Rightarrow DT$ is a distributive law with respect to monad and comonad as in 
\equref{lambda d.l.},  we clearly have:

\begin{prop}
There is a faithful 2-functor  
$$\Bimnd(\K)\hookrightarrow\Dist(\K),$$ 
which is defined on 0-cells by $(\A, B, \nu)\mapsto(\A, m(B),c(B), \lambda)$, with $\lambda$ being 
$$
\lambda(\nu):=
\gbeg{4}{5}
\got{2}{B} \got{1}{B} \gnl
\gcmu \gcl{1} \gnl
\gcl{1} \glmptb \gnot{\hspace{-0,34cm}\nu} \grmptb \gnl
\gmu \gcl{1}  \gnl
\gob{2}{B} \gob{1}{B.} \gnl
\gend
$$
\end{prop}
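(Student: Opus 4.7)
The plan is to establish three things in order: (a) the 0-cell assignment is well-defined; (b) it extends coherently to 1- and 2-cells, yielding a strict 2-functor; and (c) this 2-functor is locally faithful. For (a), given a $\nu$-bimonad $(\A,B,\nu)$, the four distributive-law conditions implicit in \deref{c-bimnd} assert exactly that $\nu$ is a two-sided distributive law of the monad $m(B)$ with itself and of the comonad $c(B)$ with itself. This is precisely the hypothesis of \leref{ni-lambda}, whose conclusion is that $\lambda(\nu)$ satisfies the mixed distributive law axioms of \equref{lambda d.l.}. Thus $(\A,m(B),c(B),\lambda(\nu))$ is a legitimate 0-cell of $\Dist(\K)$.

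For (b), send a 1-cell $(X,\psi,\phi):(\A,B,\nu)\to(\A',B',\nu')$ of $\Bimnd(\K)$---that is, a 1-cell $X$ of $\K$ together with a monad-distributive law $\psi$, a comonad-distributive law $\phi$, and the Yetter-Drinfel`d compatibility \equref{YD-cond}---to the same triple viewed as a 1-cell of $\Dist(\K)$. The monad-morphic and comonad-morphic axioms are stated identically in both 2-categories. The mixed compatibility demanded of a $\Dist$-1-cell, recalled on the left of \equref{psi-lambda-phi J. Power}, is exactly the first equation of \equref{YD-cond} after the substitutions $\lambda_0=\lambda(\nu)$ and $\lambda_1=\lambda(\nu')$ produced by step (a). On 2-cells, the description in \exref{2-cells Bimnd} together with the parallel one in \exref{bilax modif} shows that a 2-cell of $\Bimnd(\K)$ is a 2-cell of $\K$ lying simultaneously in $\Mnd(\K)$ and $\Comnd(\K)$---precisely the two axioms imposed by $\Dist(\K)$---so the identity-on-underlying-2-cells assignment is well-defined. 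Horizontal and vertical compositions in both 2-categories are given by pasting in $\K$ according to the same formulas, so identities and compositions are strictly preserved.

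For (c), local faithfulness is immediate: on every hom-category the assignment is the identity on underlying 2-cells of $\K$, hence injective. The main technical obstacle is the one appearing in (b): one must verify carefully that the entwining of $\psi$ and $\phi$ with the Yang-Baxter operator $\nu$ appearing through $\lambda_0,\lambda_1$ in \equref{YD-cond} matches the convention of Power and Watanabe displayed in \equref{psi-lambda-phi J. Power}. This amounts to a direct comparison of string diagrams once $\lambda(\nu)$ is substituted, and is the only nontrivial bookkeeping in the argument; all remaining verifications are routine.
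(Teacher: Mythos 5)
Your proposal is correct and follows essentially the same route the paper takes: the 0-cell assignment is justified by \leref{ni-lambda}, the 1- and 2-cell assignments are the identifications worked out in \exref{bilax trans triv} and \exref{bilax modif} (matching the first equation of \equref{YD-cond} with the left-hand compatibility in \equref{psi-lambda-phi J. Power} after substituting $\lambda(\nu)$ and $\lambda(\nu')$), and faithfulness is immediate because the assignment is the identity on underlying cells of $\K$. Your write-up is in fact more explicit than the paper's, which leaves these verifications to the preceding examples.
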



Observe that \prref{YD iso} 
is a consequence of the above 2-category isomorphism $\Bilax(1, \K)\iso\Bimnd(\K)$. 

\bigskip
\bigskip

{\bf Acknowledgments.} 
The first author was supported by the Science Fund of the Republic of Serbia, Grant No. 7749891, Graphical Languages - GWORDS. 

\bigskip


\begin{thebibliography}{99}


\bibitem{Agui} M. Aguiar, S. Mahajan, \emph{Monoidal functors, species and Hopf algebras}, CRM Monograph Series {\bf 29} 
Amer. Math. Soc. (2010).


\bibitem{BD} J. C. Baez, J. Dolan, {\em Higher?dimensional algebra and topological quantum field theory}, 
J. Math. Phys. {\bf 36}/6073 (1995); https://doi.org/10.1063/1.531236. 



\bibitem{Ben} J. B\'enabou, \emph{Introduction to bicategories}, Lecture notes in mathematics \textbf{47} (1967).


\bibitem{Besp} Y. Bespalov, B. Drabant, {\em Hopf (bi-)modules and crossed modules in braided monoidal categories},
J. Pure Appl. Alg. {\bf 123}/(1-3) (1998), 105--129.




\bibitem{CF} J. Cuadra, B. Femi\'c, {\em
A Sequence to Compute the Brauer Group of Certain Quasi-Triangular Hopf Algebras},
Applied Categorical Structures {\bf 20} (2012), 433--512.


 

\bibitem{DSS} C. L. Douglas, C. Schommer-Pries, N. Snyder, {\em Dualizable Tensor Categories}, 
Memoirs of the American Mathematical Society {\bf 268} (2020). 


\bibitem{EGNO}  {\sc P. Etingof}, {\sc S. Gelaki}, {\sc D. Nikshych} and {\sc V. Ostrik}. \emph{Tensor categories}. 
Mathematical Surveys and Monographs {\bf 205}, Amer. Math. Soc., Providence (2015). 


\bibitem{ENO}  P. Etingof, D. Nikshych, V. Ostrik, \emph{Fusion categories and homotopy theory}, Quantum Topol. \textbf{1}/3,
(2010) 209--273.


\bibitem{FMS} P.F. Faul, G. Manuell, J. Siqueira, {\em 2-Dimensional Bifunctor Theorems and Distributive laws}, 
Theory Appl. Categ. {\bf 37}/34 (2021), 1149--1175. 


\bibitem{F1} B. Femi\'c, {\em Biwreaths: a self-contained system in a 2-category that encodes different known algebraic 
constructions and gives rise to new ones}, J. Pure Appl. Alg. {\bf 223}/4 (2019), 1472--1513.

\bibitem{F2} B. Femi\'c, {\em A bicategorical approach to actions of monoidal categories}, J. Algebra Applic. (2022).



\bibitem{GNN} S. Gelaki, D. Naidu, D. Nikshych, {\em Centers of graded fusion categories},
Algebra Number Theory {\bf 3}/8 (2009), 959--990 . DOI: 10.2140/ant.2009.3.959


\bibitem{Gray} J. W. Gray, {\em Formal category theory: adjointness for 2-categories}, Lecture Notes in Mathematics
{\bf 391}, Springer-Verlag, Berlin-New York (1974) 1, 19, 27. 



\bibitem{HZ} S. Halbig, T. Zorman, \emph{Pivotality, twisted centres, and the anti-double of a Hopf monad}, 
preprint arxiv.org/abs/2201.05361. 


\bibitem{JS} A. Joyal, R. Street, {\em Braided Tensor Categories}, Advances in Mathematics{\bf 102}/1 (1993), 20--78. 


\bibitem{JY} N. Johnson, D. Yau, \emph{2-Dimensional Categories}, Oxford University Press (2021).


\bibitem{Kassel} C. Kassel, {\em Quantum Groups}, Graduate Texts in Mathematics {\bf 155}, 
Springer-Verlag, New York (1995).


\bibitem{Lack-Icons} S. Lack, {\em Icons}, Applied Categorical Structures {\bf 18}/3 (2010), 289--307.


\bibitem{Lack1} S. Lack, {\em A 2-Categories Companion}, Towards Higher Categories, 
The IMA Volumes in Mathematics and its Applications book series {\bf 152} (2009), 105--191. 


\bibitem{LS} S. Lack, R. Street, {\em The formal theory of monads II}, J. Pure Appl. Algebra {\bf 175}/(1-3) (2002), 243--265.


\bibitem{LaSw} R. G. Larson, M. E. Sweedler, {\em An Associative Orthogonal Bilinear Form for Hopf Algebras}, 
American Journal of Mathematics {\bf 91}/1 (1969), 75--94. 


\bibitem{Ly} V. Lyubashenko, {\em Modular Transformations for Tensor Categories} , J. Pure Appl. Algebra {\bf 98} (1995), 279--327.


\bibitem{Majid} S. Majid, {\em Representations, duals and quantum doubles of monoidal categories},
Proceedings of the Winter School on Geometry and Physics (Srn\'i, 1990), Number 26, 197--206, (1991).


\bibitem{CS} M. B. McCurdy, R. Street, {\em What Separable Frobenius Monoidal Functors Preserve},
Cahiers de Topologie et G\'eom\'etrie Diff\'erentielle Cat\'egoriques {\bf 51}/1 (2010).


\bibitem{Ehud} E. Meir, M. Szymik, {\em Drinfeld centers for bicategories}, Doc. Math. {\bf 20} (2015), 707--735. 


\bibitem{Wisb} B. Mesablishvili, R. Wisbauer, {\em Bimonads and Hopf Monads on Categories}, Journal of K-theory K-theory 
and its Applications to Algebra Geometry and Topology {\bf 7}/2 (2011), 349--388.


\bibitem{Moe} I. Moerdijk, {\em Monads on tensor categories}, J. Pure Appl. Algebra {\bf 168}/2-3 (2002), 189-208.


\bibitem{PW} J. Power, H. Watanabe, {\em Combining a monad and a comonad}, Theoretical Computer Science {\bf 280} (2002), 137--262.



\bibitem{Shim} K. Shimizu: {\em Ribbon structures of the Drinfel`d center}, arXiv:1707.09691 (2017a)


\bibitem{St1}  R. Street, {\em The formal theory of monads}, J. Pure Appl. Algebra {\bf 2} (1972), 149-168.


\bibitem{Tak} M. Takeuchi, {\em Survey of braided Hopf algebras}, New Trends in Hopf Algebra Theory (La Falda,
1999), Contemp. Math. {\bf 267}, Amer. Math. Soc., Providence, RI (2000), pp. 301--323. xlvii, 40, 631. 


\bibitem{TP} D. Turi, G.D. Plotkin, {\em Towards a mathematical operational semantics}, 
In Proc. 12th LICS Conf., pages 280--291. IEEE, Computer Society Press (1997).



\bibitem{Wor} S. L. Woronowicz, {\em Differential Calculus on Compact Matrix Pseudogroups (Quantum 
Groups)}, Commun. Math. Phys. {\bf 122}, 125 (1989).

 \end{thebibliography}

\end{document}